\documentclass{amsart}
\usepackage[utf8]{inputenc}
\usepackage{amsmath}
\usepackage{graphicx}
\usepackage{caption}
\usepackage{xcolor}
\usepackage{subfiles}
\usepackage{comment}
\usepackage{mathrsfs}
\usepackage{enumitem}
\usepackage{dsfont}
\usepackage{tikz,tikz-cd}

\newcommand{\de}{d_{\mathrm{Eucl}}}

\newtheorem{thm}{Theorem}[section]
\newtheorem{prop}[thm]{Proposition}
\newtheorem{lemma}[thm]{Lemma}
\newtheorem{cor}[thm]{Corollary}
\theoremstyle{remark}
\newtheorem*{remark}{Remark}

\numberwithin{equation}{section}

\theoremstyle{definition}
\newtheorem{defn}[thm]{Definition}

\usepackage{mathtools}

\usepackage{hyperref}

\def\semicolon{;}
\def\applytolist#1{
    \expandafter\def\csname multi#1\endcsname##1{
        \def\multiack{##1}\ifx\multiack\semicolon
            \def\next{\relax}
        \else
            \csname #1\endcsname{##1}
            \def\next{\csname multi#1\endcsname}
        \fi
        \next}
    \csname multi#1\endcsname}

\def\calc#1{\expandafter\def\csname c#1\endcsname{{\mathcal #1}}}
\applytolist{calc}QWERTYUIOPLKJHGFDSAZXCVBNM;
\def\bbc#1{\expandafter\def\csname bb#1\endcsname{{\mathbb #1}}}
\applytolist{bbc}QWERTYUIOPLKJHGFDSAZXCVBNM;
\def\bfc#1{\expandafter\def\csname bf#1\endcsname{{\mathbf #1}}}
\applytolist{bfc}QWERTYUIOPLKJHGFDSAZXCVBNM;
\def\sfc#1{\expandafter\def\csname s#1\endcsname{{\sf #1}}}
\applytolist{sfc}QWERTYUIOPLKJHGFDSAZXCVBNM;
\def\fc#1{\expandafter\def\csname f#1\endcsname{{\mathfrak #1}}}
\applytolist{fc}QWERTYUIOPLKJHGFDSAZXCVBNM;


\newcommand{\RS}{\widehat{\mathbb{C}}}

\title[Equilibrium States for Parabolic Rational Maps]{The specification approach to Equilibrium States for Parabolic Rational Maps}
\author{Katelynn Huneycutt and Daniel J. Thompson}
\thanks{This work was partially supported by NSF grant DMS-2349915.}
\address{D.~J.~Thompson, Department of Mathematics, The Ohio State University, Columbus, OH 43210, \emph{E-mail address:} \tt{thompson@math.osu.edu}}
\address{K. Huneycutt, Department of Mathematics, The Ohio State University, Columbus, OH 43210, \emph{E-mail address:} \tt{huneycutt.13@buckeyemail.osu.edu}}
\begin{document}

\subjclass[2020]{37D35, 37F10}

\begin{abstract}
 We develop the specification and orbit-decomposition approach to equilibrium states for parabolic rational maps of the Riemann Sphere. Our result extends the well-known results on uniqueness of equilibrium states in this setting, notably the results of Denker, Przytycki and Urba\'nski. We extend the class of potentials from H\"older to those with the Bowen property on `good orbits' . We obtain uniqueness of the equilibrium state for potentials satisfying a pressure gap condition which is sharp in the class of potentials we consider.  We show that our equilibrium state has the $K$-property, and in particular it has positive entropy. When the potential is H\"older, the theory of equilibrium states is already highly developed. Nevertheless, several interesting results on equilibrium states for H\"older potentials follow readily from our approach. In the family of geometric potentials, we obtain a simple proof of uniqueness of equilibrium states up to the phase transition that occurs at the Hausdorff dimension of the Julia set. For H\"older potentials on parabolic rational maps, we show that hyperbolicity of the potential is equivalent to having a unique equilibrium state which is fully supported.  This does not appear to have been stated in the literature before, although it may be considered folklore. 
  \end{abstract}
\maketitle

\section{Introduction}

We develop the specification and orbit-decomposition approach to equilibrium states in the setting of parabolic rational maps.  Recent literature on the specification approach is split into papers which build general theory for equilibrium states \cite{CT14, Call22}, and papers which apply the general theory to obtain results for specific classes of dynamical systems using arguments and structures specialized to these systems \cite{BCFT, CESW, PYY}. This paper falls into the second category and is focused on obtaining refined uniqueness of equilbrium states results for parabolic rational maps. A rational map $f:\RS\rightarrow \RS$ is an analytic endomorphism of the Riemann sphere, $\RS$. For $z\in \bbC$, $f$ has the form $f(z) = P(z)/Q(z)$ where $P(z)$ and $Q(z)$ are nonzero relatively prime polynomials. For a rational map $f:\RS\rightarrow \RS$, we denote its Julia set by $J(f)$. We consider \emph{parabolic} rational maps $f$. These are the rational maps which have no critical points in the Julia set and have at least one rationally indifferent periodic point, i.e. a point $p$ such that $f^{n}(p) = p$ for some $n>0$ and $(f^{n})'(p)$ is a root of unity.  Let $\Omega$ denote the set of rationally indifferent periodic points for $f$. The set $\Omega$ is finite and contained in $J(f)$ and is in the closure of the forward orbit of a critical point. The system $(J(f), f)$ is a well-known example of a non-uniformly hyperbolic dynamical system. 

Thermodynamic formalism for rational maps was first developed for the measure of maximal entropy in papers by Freire, Lopes, Lyubich and Ma\~n\'e \cite{FLM, Lyubich, Mane}. Equilibrium states and related ergodic theory of this setting were developed in a series of papers by Denker, Przytycki, Urba\'nski and co-authors, notably in \cite{DU-ES, DPU} with an important improvement contributed by Inoquio-Renteria and Rivera-Letelier in \cite{IR}. The landmark result is that if $f$ is a rational map with $\deg(f)\geq 2$ and $\varphi: J(f) \rightarrow \bbR$ is a hyperbolic H\"older continuous potential, then there exists a unique equilibrium state.   Here, we say that $\varphi$ is a hyperbolic potential if there exists $n \geq 1$ such that
 \begin{equation} \label{hyperbolicpotential}
 \sup_J \frac1n S_n \varphi := \sup_{z\in J} \frac1n S_n \varphi (z) < P(\varphi),
 \end{equation}
 where $P(\varphi)$ is the topological pressure of the potential $\varphi$ on the compact dynamical system $(J(f),f)$. In the seminal result of \cite{DPU}, uniqueness of equilibrium states was proved using the a priori stronger hypothesis that $\sup_J \varphi := \sup_{z\in J} \varphi(z)< P(\varphi)$. By \cite[Proposition 3.1]{IR}, if $\varphi$ satisfies \eqref{hyperbolicpotential} there is a cohomologous function $\tilde \varphi$ which satisfies  $\sup_J \tilde \varphi < P(\varphi)$.  It thus follows from \cite{IR} that a hyperbolic H\"older potential has a unique equilibrium state. 
 
Recall that a potential is hyperbolic if and only if all equilibrium states for the potential have positive entropy.  The failure of uniqueness is often associated with the presence of a zero entropy equilibrium state at a phase transition. While hyperbolicity is a mild and natural condition, it is not perfect from the point of view of the uniqueness theory since it a priori rules out the likely source of non-uniqueness which is the possibility of zero entropy equilibrium states. The room for growth in this theory is to find natural criteria for a potential to have a unique equilibrium state without assuming that the potential is hyperbolic or to establish weaker criterion to verify hyperbolicity. 

In this paper, in the setting of parabolic rational maps, we replace the lower bound in the pressure gap \eqref{hyperbolicpotential} with the quantity
\begin{align*}
    A(\Omega, \varphi) \coloneqq \max \left\{\frac{1}{n}\sum_{i=0}^{n-1} \varphi (f^iw): w \in \Omega, f^n w = w \right\},
\end{align*}
which is the maximal average of $\varphi$ along any rationally indifferent periodic orbit. Our focus is on H\"older continuous potentials, with the H\"older class given by any smooth metric on $J(f)$.
We prove the following result.
\begin{thm}\label{thm:mainA}
    Let $f:\RS\rightarrow \RS$ be a parabolic rational map of degree $d\geq 2$ and $\Omega = \{w_{1},\dots,w_{k}\}$ be the set of its rationally indifferent periodic points.
        If $\varphi: J(f) \to \bbR$  is H\"older and $A(\Omega, \varphi)< P(J(f),\varphi)$,   then $\varphi$  has a unique equilibrium state w.r.t. the dynamical system $(J(f),f)$ and it is fully supported on $J(f)$.
\end{thm}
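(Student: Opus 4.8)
The plan is to realize $(J(f),f,\varphi)$ as an instance of the abstract specification-and-orbit-decomposition machinery for uniqueness of equilibrium states, in the spirit of the work of Climenhaga and Thompson. Working with the collection of orbit segments $J(f)\times\bbN$, I would produce a decomposition into a prefix collection $\cP$, a core collection $\cG$, and a suffix collection $\cS$ for which: (i) $\cG$ has the specification property; (ii) $\varphi$ has the Bowen property on $\cG$; (iii) the obstruction pressure $P(\cP\cup\cS,\varphi)$ is strictly smaller than $P(J(f),\varphi)$; and (iv) the pressure of the obstruction to expansivity is strictly smaller than $P(J(f),\varphi)$. Granting (i)--(iv), the abstract theorem produces a unique equilibrium state $\mu$, and full support is extracted afterwards from the lower Gibbs-type bound that the machinery furnishes together with the topological exactness of $f$ on $J(f)$.

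For the decomposition, fix a small neighborhood $W$ of $\Omega$ and, for a suitable $\eta>0$, take $\cG=\cG(\eta)$ to consist (essentially) of the orbit segments that spend at least a proportion $\eta$ of their time outside $W$, while the prefix $\cP$ and suffix $\cS$ absorb the initial and terminal excursions of a general orbit segment into $W$. Two classical facts drive the construction: $f|_{J(f)}$ is topologically exact for $\deg f\ge 2$, and, since parabolicity forces $J(f)$ to contain no critical points, $f$ is uniformly expanding on $J(f)$ away from any neighborhood of $\Omega$ (this is the setting of Denker--Przytycki--Urba\'nski). Specification on $\cG$ then follows in the usual way: one bridges the terminal point of one good orbit segment to the initial point of the next in a number of steps depending only on the working scale, using topological exactness and routing the connecting orbit through the expanding region; uniform expansion off $W$ simultaneously controls the Birkhoff sums of $\varphi$ along these transitions, which is what is needed to obtain specification in the form the criterion requires.

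The heart of the matter, and where I expect the real work to lie, is estimating the pressure of the obstructions: I would bound both $P(\cP\cup\cS,\varphi)$ and the expansivity-obstruction pressure above by $A(\Omega,\varphi)$, so that the hypothesis $A(\Omega,\varphi)<P(J(f),\varphi)$ yields (iii) and (iv). An orbit segment lying in $\cP\cup\cS$ spends all but a vanishing proportion of its length inside the small neighborhood $W$ of $\Omega$; the Leau--Fatou flower description of the dynamics near a rationally indifferent point shows that within $W$ orbits drift only slowly along petals, so the number of combinatorial types of such segments grows subexponentially --- a vanishing entropy contribution --- while their Birkhoff averages of $\varphi$ differ from averages of $\varphi$ over the parabolic cycles by an amount tending to $0$ as $W$ shrinks, hence are at most $A(\Omega,\varphi)+o(1)$. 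Shrinking $W$ (equivalently, choosing $\eta$ and the working scale appropriately) drives $P(\cP\cup\cS,\varphi)$ strictly below $P(J(f),\varphi)$; and the same flower analysis, any failure of expansivity being localized near $\Omega$, bounds the expansivity-obstruction pressure by $A(\Omega,\varphi)$ as well.

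With (i)--(iv) in hand, the abstract criterion gives the unique equilibrium state $\mu$. Full support follows from the lower bound $\mu(B_n(x,\varepsilon))\ge C\,e^{S_n\varphi(x)-nP(J(f),\varphi)}$ that the machinery provides on good orbit segments --- which forces $\mu$ to charge every Bowen ball centered at a point with a sufficiently hyperbolic orbit --- combined with the topological exactness of $f$ on $J(f)$, the fact that $J(f)$ is a perfect set, and the observation that $\mu$ gives no mass to $\Omega$ (the invariant measures carried by the parabolic cycles have zero entropy and $\varphi$-integral at most $A(\Omega,\varphi)<P(J(f),\varphi)$, so they cannot appear in the ergodic decomposition of an equilibrium state). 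The delicate point throughout is the mutual calibration of the neighborhood $W$ of $\Omega$, the proportion $\eta$, and the specification and Bowen scales: one must simultaneously ensure that every orbit segment of $(J(f),f)$ decomposes with core in $\cG(\eta)$, that the transition times in the specification property are genuinely uniform, and that the obstruction pressure drops strictly below $P(J(f),\varphi)$ --- and this rests on quantitative, uniform control of how long and how closely an orbit can linger near $\Omega$, which is the technical core of the argument.
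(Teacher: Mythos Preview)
Your overall strategy---realize $(J(f),f,\varphi)$ as an instance of the Climenhaga--Thompson machinery---is exactly the paper's, and your outline would likely go through, but the paper streamlines several of the steps you flag as delicate.

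First, a \emph{one-sided} decomposition $(\cG,\cS)$ suffices: no prefix collection $\cP$ is needed. The paper's $\cG(\eta)$ is not the set of segments spending proportion $\ge\eta$ outside $W$, but the set of $(x,n)$ for which \emph{every terminal subsegment} $(f^{n-k}x,k)$ satisfies $\frac{1}{k}\sum_{i=n-k}^{n-1}\chi_E(f^ix)\ge\eta$. This stronger condition is what makes the hypothesis ``$\varphi$ Bowen on $\cG(\eta)$'' meaningful (it forces backward exponential contraction along the segment in a suitable Milnor metric) and automatically forces $f^{n-1}x\notin \overline{B(\Omega,2\alpha)}$, which is all that is needed for specification via normality of inverse branches plus topological exactness. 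Be careful that your looser description of $\cG(\eta)$ matches the one in the hypothesis.

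Second, there is no ``obstruction to expansivity'' to estimate: parabolic rational maps have no critical points in $J(f)$, hence $f|_{J(f)}$ is globally positively expansive (Denker--Urba\'nski). The abstract theorem is invoked in its positively-expansive form, and full support is part of its conclusion---your separate argument via lower Gibbs bounds and $\mu(\Omega)=0$ is unnecessary.

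Third, and this is the main technical shortcut, the paper avoids any direct Leau--Fatou petal analysis for the pressure gap. Instead it uses an abstract result (due to Call) that for any $\lambda$-decomposition with upper-semicontinuous entropy map, $\lim_{\eta\downarrow 0}P(\cB(\eta),\varphi)\le P(B_\infty,\varphi)$ where $B_\infty=\bigcap_n f^{-n}\lambda^{-1}(0)$. One then checks, using only expansivity, that $B_\infty=\Omega$, whence $P(B_\infty,\varphi)=A(\Omega,\varphi)$. Choosing $\eta$ small enough then gives $P(\cB(\eta),\varphi)<P(\varphi)$. This replaces your hands-on subexponential-growth estimate with a soft limiting argument, and removes the ``mutual calibration'' of scales you identify as the technical core.
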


 Theorem \ref{thm:mainA} holds when the H\"older assumption on $\varphi$ is weakened to the technical dynamical regularity condition that $\varphi$ is `Bowen on good orbit segments'. 
 We define this property in \S\ref{background} and we verify that the `Bowen on good' property holds for H\"older potentials in \S \ref{bowenProperty}. We state the more general version of our result as Theorem \ref{generalthm}.
 
 In our setting, the pressure gap $A(\Omega, \varphi)< P(\varphi)$ is a sharp criterion to have a unique equilibrium state which is fully supported. This is because each periodic orbit in $\Omega$ corresponds to an invariant measure, so if the pressure gap fails, then at least one of these measures is an equilibrium state.   We emphasize that the `obstruction to uniqueness' term  $A(\Omega, \varphi)$ in our pressure gap is local, whereas the corresponding term in the hyperbolicity condition \eqref{hyperbolicpotential} is a global quantity. 

As an application of Theorem \ref{thm:mainA}, we consider the family of geometric potentials 
\[
\varphi_t(z) \coloneqq -t\log|f'(z)|,
\] 
which are H\"older continuous on $J(f)$. Restricted to this family, our results are not new, but our proof is simple and instructive. Let $h = \text{HD}(J(f))$ denote the Hausdorff dimension of $J(f)$. Denker and Urba\'nski \cite{DU91} showed that $h$ is the unique zero of the pressure function $t \mapsto P(\varphi_t)$, so $P(\varphi_t) > 0$ for all $t < h$. The average of  $\varphi_t$ over any parabolic cycle is zero, so $A(\varphi_{t}) = 0$. Thus, the inequality $A(\varphi_{t}) < P(\varphi_t)$ holds and the equilibrium state for $\varphi_t$ is unique for $t<h$. 
\begin{cor} \label{cor}
    Let $f$ be a parabolic rational map and $\varphi_{t}(z) \coloneqq -t\log|f'(z)|$ be the family of geometric potentials. Let $h$ be the Hausdorff dimension of the Julia set, then there exists a unique equilibrium state for all $t<h$.
\end{cor}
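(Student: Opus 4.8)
The plan is simply to verify that $\varphi_t$ satisfies the hypotheses of Theorem~\ref{thm:mainA} whenever $t < h$; uniqueness and full support then follow at once. Concretely, three things must be checked: that $\varphi_t$ has the Bowen property on $\cG(\eta)$ for every $\eta > 0$, that $A(\Omega, \varphi_t) = 0$, and that $P(\varphi_t) > 0$.

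For the regularity, the key point is that a parabolic rational map has no critical points in $J(f)$, so on the compact set $J(f)$ the derivative $|f'|$ is bounded away from $0$ and from $\infty$. After passing to the analytic conjugate with $J(f) \subset \bbC$ and using the Euclidean metric, as in the discussion preceding the corollary, $z \mapsto -t\log|f'(z)|$ is real-analytic on $J(f)$, hence Lipschitz and in particular H\"older; by the fact (established earlier in the paper) that every H\"older potential has the Bowen property on $\cG(\eta)$ for all $\eta>0$, the first hypothesis holds. For the second, I would use the chain rule: if $w \in \Omega$ and $f^n w = w$, then $(f^n)'(w) = \prod_{i=0}^{n-1} f'(f^i w)$ is a root of unity because $w$ is rationally indifferent, so $\sum_{i=0}^{n-1} \log|f'(f^i w)| = \log|(f^n)'(w)| = 0$. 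Hence $\frac1n \sum_{i=0}^{n-1} \varphi_t(f^i w) = 0$ for every such $w$ and every $t$, and therefore $A(\Omega, \varphi_t) = 0$.

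For the pressure positivity I would invoke Denker--Urba\'nski \cite{DU91}, which identifies $h = \mathrm{HD}(J(f))$ as the unique zero of the continuous function $t \mapsto P(\varphi_t)$. Since $P(\varphi_0) = h_{\mathrm{top}}(f|_{J(f)}) = \log d > 0$, since $h > 0$, and since this function has no zero on $(-\infty, h)$, the intermediate value theorem forces $P(\varphi_t) > 0$ for all $t < h$. Putting the three points together, for $t < h$ we have $A(\Omega, \varphi_t) = 0 < P(\varphi_t)$ with $\varphi_t$ Bowen on good, so Theorem~\ref{thm:mainA} applies and produces the unique, fully supported equilibrium state. The only genuinely external ingredient is the identification of $h$ with the zero of the pressure function; the remaining verifications are elementary, so I expect no real obstacle here beyond the main theorem itself --- the corollary is intended to illustrate Theorem~\ref{thm:mainA} rather than to introduce new difficulties.
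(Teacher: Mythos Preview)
Your proposal is correct and follows essentially the same route as the paper's proof: verify H\"older regularity of $\varphi_t$ on $J(f)$ (no critical points in the Julia set), compute $A(\Omega,\varphi_t)=0$ via $|(f^n)'(w)|=1$ for rationally indifferent $w$, invoke \cite{DU91} for $P(\varphi_t)>0$ when $t<h$, and apply Theorem~\ref{thm:mainA}. The only cosmetic difference is that the paper cites the positivity of pressure directly from \cite{DU91} (their Corollary~16) rather than rederiving it from the location of the zero and the intermediate value theorem.
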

We could not find an explicit statement of the corollary in the literature, but we emphasize that it can be obtained readily from a criteria of Inoquio-Renteria and Rivera-Letelier which we discuss in \S \ref{geo.potential}. 
We now investigate further properties of the equilibrium state. Our key result is the following.

\begin{thm}\label{thm:posentropy}
Assume the setting of Theorem \ref{thm:mainA}. The unique equilibrium state has the $K$-property. In particular, the equilibrium state has positive entropy.
\end{thm}

For a H\"older potential, this result has strong consequences since positive entropy implies that the potential is hyperbolic. Via the results of \cite{IR}, we can then apply the results of Szostakiewicz, Urbanski and Zdunik \cite{SUZ} to obtain the following.

\begin{cor} \label{cor2}
 Let $f:\RS\rightarrow \RS$ be a parabolic rational map of degree $d\geq 2$ and $\Omega = \{w_{1},\dots,w_{k}\}$ be the set of its rationally indifferent periodic points. Let $\varphi: J(f) \rightarrow \bbR$ have the Bowen property on $\cG(\eta)$ for every $\eta>0$. The following are equivalent:
 \begin{enumerate}
 \item $\varphi$ is hyperbolic: there exists $n$ so that  $\sup_J \frac{1}{n}S_n \varphi  < P(\varphi)$
 \item $A(\Omega, \varphi)< P(\varphi)$
 \end{enumerate}
 In particular, if $\varphi$ is H\"older and $A(\Omega, \varphi)< P(\varphi)$, then the unique equilibrium state satisfies the strong statistical properties provided by \cite{SUZ}: exponential decay of correlations, central limit theorem and law of iterated logarithms.
\end{cor}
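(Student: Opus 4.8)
The plan is to establish $(1)\Leftrightarrow(2)$ directly from Theorems~\ref{thm:mainA} and \ref{thm:posentropy} together with the variational principle, and then obtain the statistical consequences by reducing to the pressure-gap setting of \cite{DPU,SUZ} via \cite{IR}. The implication $(1)\Rightarrow(2)$ is elementary. Assuming $\sup_J\frac1n S_n\varphi < P(\varphi)$, I would first note that this self-improves to every multiple of $n$: since $S_{kn}\varphi = \sum_{j=0}^{k-1} S_n\varphi\circ f^{jn}$, we get $\frac{1}{kn}S_{kn}\varphi \le \sup_J\frac1n S_n\varphi$, hence $\sup_J\frac{1}{kn}S_{kn}\varphi < P(\varphi)$ for all $k\ge1$. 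Then fix $w\in\Omega$ of primitive period $m$ and take $k=m$; because the orbit of $w$ has period $m$ we have $S_{mn}\varphi(w) = n\sum_{i=0}^{m-1}\varphi(f^iw)$, so $\frac{1}{mn}S_{mn}\varphi(w)$ equals the orbit average $\frac1m\sum_{i=0}^{m-1}\varphi(f^iw)$, and this average is therefore $<P(\varphi)$. As $\Omega$ is finite, maximizing over $w\in\Omega$ gives $A(\Omega,\varphi)<P(\varphi)$.

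For $(2)\Rightarrow(1)$ I would argue by contradiction. Assume $A(\Omega,\varphi)<P(\varphi)$; then Theorem~\ref{thm:mainA} gives a unique equilibrium state $\mu$ and Theorem~\ref{thm:posentropy} gives $h_\mu(f)>0$. Suppose $\varphi$ is not hyperbolic, so for each $n$ there is $z_n\in J(f)$ (by compactness of $J(f)$ and continuity of $S_n\varphi$) with $\frac1n S_n\varphi(z_n)\ge P(\varphi)$. Let $\nu$ be a weak-$*$ subsequential limit of the empirical measures $\mu_n=\frac1n\sum_{i=0}^{n-1}\delta_{f^iz_n}$. The bound $|\int g\,d(f_*\mu_n-\mu_n)|\le 2\|g\|_\infty/n$ shows $\nu$ is $f$-invariant, and $\int\varphi\,d\nu=\lim_k\frac1{n_k}S_{n_k}\varphi(z_{n_k})\ge P(\varphi)$. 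Combined with the variational inequality $h_\nu(f)+\int\varphi\,d\nu\le P(\varphi)$, this forces $h_\nu(f)=0$ and $\int\varphi\,d\nu=P(\varphi)$, so $\nu$ is an equilibrium state for $\varphi$; notice no ergodicity of $\nu$ is needed. By the uniqueness in Theorem~\ref{thm:mainA}, $\nu=\mu$, contradicting $h_\mu(f)>0=h_\nu(f)$. Hence $\varphi$ is hyperbolic, which completes the equivalence.

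For the last assertion, assume $(2)$; by the equivalence $\varphi$ is hyperbolic, so \cite[Proposition 3.1]{IR} yields a H\"older potential $\tilde\varphi$ cohomologous to $\varphi$ with $\sup_J\tilde\varphi<P(\tilde\varphi)=P(\varphi)$. A coboundary changes neither the entropy nor the integral of the potential against an invariant measure, so $\mu$ is also the unique equilibrium state of $\tilde\varphi$, which satisfies the pressure gap of \cite{DPU}; the results of \cite{SUZ} then apply and give exponential decay of correlations for H\"older observables, the central limit theorem, and the law of the iterated logarithm for $(J(f),f,\mu)$, these being properties of $(f,\mu)$ that a bounded coboundary does not affect. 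I expect the genuine weight of the corollary to reside in Theorems~\ref{thm:mainA} and \ref{thm:posentropy}; within the present argument the only delicate points are verifying that $\nu$ is honestly $f$-invariant and applying the variational principle to squeeze $h_\nu(f)$ to zero, after which uniqueness of the equilibrium state delivers the contradiction at once, with the remainder being citation and bookkeeping.
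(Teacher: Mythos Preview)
Your proof is correct and follows essentially the same route as the paper. The only difference is cosmetic: for $(2)\Rightarrow(1)$ the paper cites \cite[Proposition~3.1]{IR} for the general fact that $\varphi$ is hyperbolic if and only if every equilibrium state has positive entropy, whereas you supply the standard empirical-measure argument for the needed direction inline; the remaining steps (uniqueness from Theorem~\ref{thm:mainA}, positive entropy from Theorem~\ref{thm:posentropy}, passage to $\tilde\varphi$ via \cite{IR}, and appeal to \cite{SUZ}) match the paper exactly.
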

From (2), if $\varphi$ is not hyperbolic, we must have $A(\Omega, \varphi)= P(\varphi)$ and thus there is an equilibrium state which is a periodic orbit measure in $\Omega$. Thus, the hyperbolicity property is not only sufficient for, but \emph{equivalent to}, the property that there is a unique equilibrium state and the equilibrium state is fully supported. Although this result may be folklore, we emphasize that it follows readily from our approach and has not previously appeared in the literature\footnote{After completing this work, Juan Rivera-Letelier shared with us an elegant unpublished argument to show that $A(\Omega, \varphi)< P(\varphi)$ implies hyperbolicity of the potential based on direct analysis of Lyapunov exponents and facts from complex analysis. Analogous results have been proved for maps of the interval using different techniques, see  e.g.  \cite[Appendix A]{CRL25}, \cite[Corollary 3.8]{BK98}.}. 
\begin{cor} \label{cor3}
Let $f:\RS\rightarrow \RS$ be a parabolic rational map of degree $d\geq 2$. Let $\varphi: J(f) \rightarrow \bbR$ be H\"older continuous. Then there is a unique equilibrium state $\mu$, with $\mu$ fully supported on $J(f)$, if and only if $f$ is hyperbolic.
\end{cor}

Our proof of Theorem \ref{thm:mainA} is based on the specification and orbit-decomposition approach to thermodynamic formalism, and is obtained by verifying criteria developed by Climenhaga and the second-named author. These techniques have previously been applied mostly in invertible or continuous-time settings, but the general machinery applies equally to endomorphisms, as established in the original papers in this direction, e.g. \cite{CT14}. Theorem B follows from work of Call and the second-named author and follows from lifting the orbit decomposition to the product system $(J(f) \times J(f), f \times f)$. 

To the best of our knowledge, this is the first time that a specification-based approach to equilibrium states has been developed in a complex dynamics context\footnote{We note that specification ideas were used in \cite[Appendix A]{PRLS03} to connect the topological Collet-Eckman condition and uniform hyperbolicity on periodic orbits. It was observed there that these ideas have independent interest.}.  Our proof thus has methodological interest, and it is intended to lay the foundations for further exploration of specification techniques in complex dynamics. 
Existing results are usually based on the powerful and well-developed machinery of transfer operators and inducing schemes, which have had great success in this area. We anticipate that further development of the specification approach in complex dynamics will provide a complementary toolkit which has some advantages.

We briefly discuss further literature surrounding this area. There has been a recent resurgence in interest in thermodyamic formalism and complex dynamics in papers such as \cite{BianchiDinh,BianchiDinh2, BianchiHE, BH25}. In addition to the key references for general rational maps already discussed, particularly  \cite{DPU, SUZ}, the papers \cite{Przytycki, Aaronson, DU91, GPR} extensively develop the ergodic theory of the parabolic case, focusing on the natural measure and its dimension theory.  A wide class of non-uniformly hyperbolic rational maps, mostly without parabolics, were studied by Przytycki and Rivera-Letelier  \cite{PRL}. There is an extensive literature on thermodynamic formalism in complex dynamics including \cite{Briend, UV, P-survery, PRL, UrbanskiZdunki, Zinsmeister}. We particularly recommend the textbook \cite{URM} for a thorough treatment of advanced topics in this area. 

There has been extensive progress in recent years in the general theory of equilibrium states. In addition to the framework provided by specification techniques, there has been remarkable progress in smooth settings based on countable-state symbolic dynamics. The seminal work of Buzzi, Crovisier and Sarig \cite{BCS22} implies that a hyperbolic potential for a smooth diffeomorphism of a surface has a unique equilibrium state. This theory has been developed for general smooth endomorphisms in \cite{ALP, LOP}. The coding is constructed to capture typical points for all measures with exponent above some fixed threshold, and this can be used to show that there is at most one \emph{hyperbolic} equilibrium state. To use such codings for uniqueness results among \emph{all} equilibrium states, the possibility of zero exponent equilibrium states must be excluded a priori. Thus, the symbolic techniques do not yield new information for H\"older potentials for rational maps.

The paper is structured as follows. In \S \ref{background},  we develop the relevant background in thermodynamic formalism and complex dynamics. In \S \ref{mainresults}, we prove our main theorem, Theorem \ref{thm:mainA}. In \S \ref{geo.potential}, we discuss the application to geometric potentials. In \S \ref{pos.entropy}, we prove Theorem \ref{thm:posentropy} and Corollary \ref{cor2}.
\section{Background} \label{background}
\subsection{Thermodynamic Formalism} \label{tfbackground}

Given a compact metric space $(X,d)$, a continuous map $T:X\rightarrow X$, and a continuous potential function $\varphi :X\rightarrow \mathbb{R}$, an \textit{equilibrium state} for $(\varphi, T)$ is the measure that achieves the supremum, 
\begin{align*}
    P(\varphi; T) =P(\varphi) = \sup_{\mu\in M(X,T)}\left\{h_{\mu} (T) +\int \varphi \ d\mu \right\},
\end{align*}
     where $M(X,T)$ is the set of Borel $f$-invariant probability measures on $X$. For $n\in\mathbb{N}$, the $n$th Birkhoff sum is $S_n\varphi(x) \coloneqq\sum_{k=0}^{n-1}\varphi(T^{k}x)$. We have $h_{\mu}(T^n) = n h_{\mu}(T)$ and $P(S_n\varphi; T^n) = n P(\varphi, T)$. It follows that $\mu$ is an equilibrium state for $(\varphi, T)$ if and only if it is an equilibrium state for $(S_n\varphi, T^n)$.

    Bowen \cite{B74} showed if $T:X\rightarrow X$ an expansive homeomorphism of a compact metric space with the specification property, then each $\varphi:X\rightarrow \bbR$ with the Bowen property has a unique equilibrium state. In \cite{CT13}, this theorem was generalized using \textit{decompositions} of orbit into `good' and `bad' orbit segments. In this section, we give the background to state the general criteria for uniqueness that we apply to the setting of rational maps.  Given $n\in \mathbb{N}$ and $x,y\in X$, we define the $n^{\mathrm{th}}$ \textit{Bowen metric} as
\begin{align*}
    d_n(x,y) \coloneqq \sup \{d(T^kx,T^ky)\mid 0\leq k<n\}.
\end{align*}
The \textit{Bowen ball} of order $n$ and radius $\delta$ centered at $x\in X$ is defined as
\begin{align*}
    B_n(x,\delta) \coloneqq \{y\in X \mid d_{n}(x,y)<\delta \}.
\end{align*}

\begin{defn}
    A dynamical system is  \emph{positively expansive} if there exists $\beta>0$ so that for all $x\in X$, if $y\in B_{n}(x,\beta)$ for all $n\in \bbN$, then $x=y$. We say that $\beta$ is an expansivity constant.
\end{defn}

We denote an orbit segment $(x,Tx,\dots, T^{n-1}x)$ by the ordered pair $(x,n) \in X \times \bbN$. The following definitions of the specification property, Bowen potentials, and pressure are stated for orbit segments. Let $\cD \subset X\times \bbN$ be a collection of orbit segments. The orbit segments in $\cD$ of length $n$ are denoted
\[D_{n}\coloneqq \{ x\in X \mid (x,n)\in \cD\}.\]
We say the point $y\in X$ $\delta$-shadows the orbit segment $(x,n)$ if $y\in B_{n}(x,\delta)$. The specification property at scale $\delta$ allows us to find a point that $\delta$-shadows finitely many orbit segments with a uniform transition time between each orbit segment.   
\begin{defn}
A collection of orbit segments $\cD\subset X\times \bbN$ has \textit{specification at scale $\delta>0$} if there exists $\tau \in \bbN$ such that for every $(x_1,n_1),\dots, (x_{k},n_{k})\in \cD$ there exists $y\in X$ such that 
\begin{align*}
    T^{\sum_{j=1}^{i-1}(n_{j}+\tau)}(y)\in B_{n_i}(x_i,\delta) \  \text{for all} \ 1\leq i\leq k
\end{align*}
\end{defn}
A collection of orbit segments $\cD$ has the Bowen property at scale $\varepsilon$, if for any orbit segment $(x,n)\in \cD$ and any $y$ that $\varepsilon$-shadows $(x,n)$, the difference between the $n$th Birkhoff sums of $x$ and $y$ to be bounded by a uniform constant independent of $x$.
\begin{defn}
    A continuous function $\varphi : X\rightarrow \bbR$ has the \textit{Bowen property} on a collection of orbit segments $\cD \subset X\times \bbN$  if there exists a scale $\varepsilon >0$ and a constant $V>0$ such that for all $(x,n)\in \cD $ and $y\in B_{n}(x,\varepsilon)$, we have $|S_{n}\varphi(x)-S_{n}\varphi(y)|\leq V$.
\end{defn}
We now define the topological pressure of a set of orbit segments. First, for $\varepsilon>0$ and $n\in\bbN$, a set $E\subset X$ is an $(n,\varepsilon)$-\textit{separated} set if for all $x,y\in E$ with $x\neq y$, we have $d_{n}(x,y)\geq \varepsilon$.
\begin{defn}
    Given a continuous potential function $\varphi:X\rightarrow \mathbb{R}$ and a collection of orbit segments $\mathcal{D}\subset X\times \mathbb{N}$, for each $\varepsilon>0$ and $n\in\mathbb{N}$, we consider the partition sum
    \begin{align*}
        \Lambda(\mathcal{D},\varphi, \varepsilon,n) \coloneqq \sup \left\{\sum_{x\in E} e^{S_n\varphi(x)}:E\subset \mathcal{D}_{n} \ \text{is $(n,\varepsilon)$-separated} \right\}.
    \end{align*} 
The \textit{pressure} of $\varphi$ on the collection $\mathcal{D}$ at scale $\varepsilon>0$ is 
    \begin{align*}
        P(\mathcal{D},\varphi, \varepsilon)\coloneqq \limsup_{n\rightarrow \infty }\frac{1}{n}\log \left(\Lambda(\mathcal{D},\varphi, \varepsilon,n) \right) 
    \end{align*}
    and the pressure of $\varphi$ on the collection $\mathcal{D}$ is 
     \begin{align*}
        P(\mathcal{D},\varphi)\coloneqq \lim_{\varepsilon\rightarrow 0 }  P(\mathcal{D},\varphi, \varepsilon)
    \end{align*}
    When $\cD = X\times\mathbb{N}$, we obtain the topological pressure of $\varphi$ for the system $(X,T)$, see \cite{Walters}, and denote it $P(\varphi)$.
    For $\varphi \equiv 0$, $P(\cD,0)$ is the topological entropy of $T$, denoted $h_{\text{top}}(T)$. 
\end{defn}

\begin{defn} 
    A one-sided \textit{decomposition} of $X\times \bbN$ consists of two collection $\cG,\cS\subset X\times \bbN$ for which there exist two functions $g,s:X\times \bbN\rightarrow \bbN$ such that for every $(x,n)\in X\times \bbN$, the values $g=g(x,n)$ and $s=s(x,n)$ satisfy $g+s=n$ and
    \begin{align*}
       (x,g)\in \cG, \quad (T^{g}x,s)\in \cS
    \end{align*}
\end{defn}

Loosely speaking, we split the orbit segment into a `good' segment in $\cG$ and a `suffix' in $\cS$. On a `good' segment, we will require $T$ to have the specification property and $\varphi$ to have the Bowen property. The pressure of the `suffix' set should be less than the pressure on the whole space. The following abstract theorem is our tool for verifying uniqueness of equilibrium states.
 \begin{thm}[Climenhaga-Thompson] Let $X$ be a compact metric space, $T:X\rightarrow X$ a positively expansive continuous map, and $\varphi:X\rightarrow \mathbb{R}$ a continuous potential function. Suppose 
$X\times \mathbb{N}$ admits a decomposition $(\cG,\cS)$ with the following properties: 
\begin{enumerate}
    \item $\mathcal{G}$ has specification at any scale $\delta >0 $ 
    \item $\varphi$ has the Bowen property on $\mathcal{G}$
    \item $P( \mathcal{S},\varphi)<P(\varphi)$
\end{enumerate}
Then $(X,T,\varphi)$ has a unique equilibrium state $\mu$. The measure $\mu$ is fully supported. \label{thm: CT16}
\end{thm}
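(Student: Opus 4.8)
The plan is to construct a candidate equilibrium state $\mu$ directly from weighted sums over $(n,\delta)$-separated subsets of the good collection $\mathcal{G}$, to prove a one-sided (lower) Gibbs bound for $\mu$ along the orbit segments in $\mathcal{G}$, and then to leverage that bound together with the pressure gap (3) to force every ergodic equilibrium state to coincide with $\mu$. Throughout I would fix a scale $\delta$ that is smaller than a Bowen scale for $\varphi$ on $\mathcal{G}$ and smaller than a positive expansivity constant $\beta$. Positive expansivity then guarantees that the scale-$\delta$ pressure of the full space already equals $P(\varphi)$, so the measure construction can be run at this fixed scale, and the hypothesis that $\mathcal{G}$ has specification at \emph{every} scale lets me glue orbit segments at whatever small scale is convenient without further shrinking $\delta$.

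Step 1 (the good collection carries full pressure). Using the decomposition functions $g,s$ and the identity $e^{S_n\varphi(x)} = e^{S_g\varphi(x)}\,e^{S_s\varphi(T^g x)}$ with $g=g(x,n)$, $s=s(x,n)$, one bounds a partition sum for $X\times\mathbb{N}$ at length $n$ by a sum over the split point $g$ of products of a partition sum for $\mathcal{G}$ at length $g$ and one for $\mathcal{S}$ at length $n-g$, after a routine adjustment of scales and a polynomial combinatorial factor to absorb non-injectivity of $x\mapsto(x,g)$. Since $P(\mathcal{S},\varphi)<P(\varphi)$, the terms in which $n-g$ is a definite proportion of $n$ are exponentially negligible, so $P(\mathcal{G},\varphi)=P(\varphi)$; the same estimate shows that for every $\rho>0$ the collection of segments whose suffix length exceeds $\rho n$ has pressure strictly below $P(\varphi)$. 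This refinement is the real content of the pressure gap, and it is reused in the uniqueness step.

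Step 2 (construction of $\mu$, the equilibrium property, and the lower Gibbs bound). Along a subsequence $n_j$ realizing $\tfrac{1}{n_j}\log\Lambda(\mathcal{G},\varphi,\delta,n_j)\to P(\varphi)$, take nearly maximal $(n_j,\delta)$-separated subsets $E_j\subset(\mathcal{G})_{n_j}$, form the normalized weighted measures $\nu_j=Z_j^{-1}\sum_{x\in E_j}e^{S_{n_j}\varphi(x)}\delta_x$ and their Cesàro averages $\mu_j=\tfrac{1}{n_j}\sum_{k=0}^{n_j-1}T^k_*\nu_j$, and let $\mu$ be a weak-$*$ limit point. A Walters-type argument — partitioning $X$ into sets of diameter below $\delta$ with $\mu$-null boundary, made effective by positive expansivity — yields $h_\mu(T)+\int\varphi\,d\mu\ge P(\varphi)$, so $\mu$ is an equilibrium state. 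For the lower Gibbs bound: given $(x,n)\in\mathcal{G}$ and large $m=n_j$, apply specification on $\mathcal{G}$ to glue $(x,n)$ in front of each $y\in E_j$, producing $(n+\tau+m,\delta)$-separated points $z=z(y)$ with $z\in B_n(x,\delta)$ and $T^{n+\tau}z\in B_m(y,\delta)$; the Bowen property on $\mathcal{G}$ controls $|S_n\varphi(z)-S_n\varphi(x)|$ and $|S_m\varphi(z(y))-S_m\varphi(y)|$, and tracking the normalizations through the gluing gives a uniform $Q>0$ with
\[
\mu\bigl(B_n(x,3\delta)\bigr)\;\ge\;Q\,e^{\,S_n\varphi(x)-nP(\varphi)}\qquad\text{for all }(x,n)\in\mathcal{G}.
\]
Full support of $\mu$ is then immediate from this bound, since the good segments are abundant enough that every ball contains a Bowen ball about the start of one.

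Step 3 (uniqueness), and the main obstacle. Let $\nu$ be an ergodic equilibrium state. For $\nu$-a.e.\ $x$, Brin--Katok (valid at the fixed scale $\delta<\beta$) gives $-\tfrac1n\log\nu(B_n(x,\delta))\to h_\nu(T)$ and Birkhoff gives $\tfrac1n S_n\varphi(x)\to\int\varphi\,d\nu$; since $h_\nu(T)+\int\varphi\,d\nu=P(\varphi)$, this means $\nu(B_n(x,\delta))=e^{-nP(\varphi)+S_n\varphi(x)+o(n)}$. Decomposing $n=g(x,n)+s(x,n)$ with $(x,g)\in\mathcal{G}$, the lower Gibbs bound yields $\mu(B_n(x,\delta))\ge\mu(B_g(x,\delta))\ge Q\,e^{-nP(\varphi)+S_n\varphi(x)}\cdot e^{\,sP(\varphi)-S_s\varphi(T^g x)}$. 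Here the pressure gap re-enters through Step 1: a $\nu$-typical orbit cannot, for a density-one set of $n$, spend a definite proportion of its time inside the suffix-heavy collection without $\nu$ concentrating on a collection of pressure strictly below $P(\varphi)$, incompatible with $h_\nu+\int\varphi\,d\nu=P(\varphi)$; hence $s(x,n)=o(n)$ along such $n$, so $|S_s\varphi(T^gx)|\le s\|\varphi\|=o(n)$ and the correction factor is $e^{o(n)}$, giving $\mu(B_n(x,\delta))\ge e^{-o(n)}\nu(B_n(x,\delta))$. A covering argument (Bowen balls at the fixed scale $\delta<\beta$ have bounded multiplicity) upgrades this to $\nu\ll\mu$; the same lower Gibbs bound applied to an invariant set against its complement shows $\mu$ is ergodic, so an invariant probability absolutely continuous with respect to $\mu$ must equal $\mu$, whence $\nu=\mu$, and a general equilibrium state equals $\mu$ by ergodic decomposition. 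I expect this last step to be the genuine obstacle: pinning down the precise sense in which the pressure gap makes the suffix sublinear along a $\nu$-typical orbit, and controlling the resulting sub-exponential errors uniformly enough to run the covering argument that delivers genuine absolute continuity. Step 2's Gibbs bound is the most computation-heavy part but routine given specification at small scales and the Bowen property, while Step 1 and the equilibrium property of $\mu$ are standard once the scale has been fixed via positive expansivity.
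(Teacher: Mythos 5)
Your Steps 1 and 2 follow essentially the same route as the actual proof in \cite{CT16} (which this paper cites rather than reproves): factorizing partition sums through the decomposition to show the good collection carries full pressure, building $\mu$ from weighted separated sets in $\mathcal{G}$, and proving a lower Gibbs bound on good segments via specification and the Bowen property. (One technical remark on Step 2: since $\mu$ is a limit of Ces\`aro averages, bounding $\mu(B_n(x,3\delta))$ below requires gluing a prefix of arbitrary length $k$ and a suffix around $(x,n)$, not just placing $(x,n)$ in front of points of $E_j$; this in turn needs uniform lower bounds $\Lambda(\mathcal{G},\varphi,\delta,m)\geq Ce^{mP(\varphi)}$ for \emph{all} $m$, which is itself a specification argument and not a consequence of the limsup in Step 1.) The genuine gap is in Step 3, and you correctly sense it. First, the passage from the $\nu$-a.e.\ asymptotic comparison $\mu(B_n(x,\delta))\geq e^{-o(n)}\nu(B_n(x,\delta))$ to $\nu\ll\mu$ is not a valid covering argument: the error is sub-exponential but unbounded and non-uniform in $x$, and Bowen balls of varying order satisfy no Vitali/Besicovitch-type differentiation theorem. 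Even after an Egorov reduction to a uniform $N$ and a maximal-separated-set cover at a single order $n$, the estimate one gets has the shape $\nu(A)\leq e^{C\epsilon n}\,\mu(U_n)$ where $U_n$ is a Bowen-ball neighborhood of $A$; the factor $e^{C\epsilon n}$ grows while $\mu(U_n)$ decreases at an uncontrolled rate, so $\mu(A)=0$ yields no conclusion about $\nu(A)$. Second, the claim that the lower Gibbs bound on $\mathcal{G}$ makes $\mu$ ergodic is unjustified: Bowen's ergodicity (in fact mixing) argument for Gibbs measures uses two-sided Gibbs bounds at \emph{every} point together with a specification estimate of the form $\mu(A\cap T^{-N}B)\gtrsim\mu(A)\mu(B)$, and the upper bounds are exactly what is unavailable here (in \cite{CT16} they only hold with obstruction-pressure corrections). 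Without ergodicity of $\mu$, even a proof of $\nu\ll\mu$ would not give $\nu=\mu$.

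The way \cite{CT16} closes the argument avoids absolute continuity altogether and uses the pressure gap quantitatively. Suppose $\nu$ is an ergodic equilibrium state with $\nu\perp\mu$, and pick a set of full $\nu$-measure and zero $\mu$-measure. A Katok-type lemma (which is also the rigorous form of your ``suffix is sublinear'' claim: if $\nu$-a.e.\ point lies in a collection $\mathcal{C}$ for infinitely many $n$, then $h_\nu(T)+\int\varphi\,d\nu\leq P(\mathcal{C},\varphi,\delta)$, so the refined gap of your Step 1 forces $s(x,n)/n\to 0$) produces, for large $n$, $(n,\delta)$-separated sets of $\nu$-typical points of nearly maximal total weight $e^{n(P(\varphi)-\epsilon)}$ whose orbit segments have good cores occupying all but a small proportion of the time and which lie in a small neighborhood of the $\mu$-null set. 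Summing the lower Gibbs bound for $\mu$ over the good cores of these separated points then bounds $\mu$ of that shrinking neighborhood below by a quantity in which the exponential margin coming from $P(\mathcal{S},\varphi)<P(\varphi)$ absorbs all $e^{\epsilon n}$-type losses, contradicting $\mu$-nullity. In other words, the exponential gap, not a differentiation theorem for Bowen balls, is what delivers uniqueness; as written, your Step 3 would fail at exactly the two points identified above.
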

The above result was proved by Climenhaga and the second named author in \cite[Theorem 5.5]{CT16} for homeomorphisms with a non-uniform expansivity assumption. The non-invertible case with a non-uniform positive expansivity assumption is essentially the same proof, with some simpler book-keeping.  In the measure of maximal entropy case, the non-invertible case was treated with full details in \cite{CT14}. 

Although the details of the equilibrium state proof have not been written out in the non-invertible setting, it is clear that the proof of \cite{CT16} adapts without issue, and the proof in \cite{CT14} for the measure of maximal entropy can be used as a roadmap for the necessary adaptations. 

\subsection{Dynamics of rational maps}

We recall some preliminaries from complex dynamics. For a more detailed treatment of the this topic see \cite{Blanchard, Beardon, Milnor}.
Let $f:\RS\rightarrow \RS$ be a rational map (an analytic self-map of the Riemann sphere). The \textit{Fatou set} of $f$, denoted $F(f)$, is the maximal open subset of $\RS$ on which $\cF = \{f^{n}: n\in\bbN \}$ is normal (or equicontinuous). The \textit{Julia set} of $f$, $J(f)$, is the complement of the Fatou set. $J(f)$ is compact, perfect, and completely invariant under $f$, that is $J(f) = f^{-1}(J(f))=f(J(f))$. 

One key tool in this setting is Montel's Theorem, which states if each $f\in\cF$ fails to take on 3 points in $\RS$, then $\cF$ is a normal family. The next two lemmas are consequences of this Theorem. First, we have the normality of inverse branches. 
\begin{lemma}
    Let $f$ be a rational map with $\text{deg}(f)\geq 2$, and suppose that the family $\{f_\nu^{-m}:m\geq 1\}_{m,\nu}$ is such that each $f^{-m}_{\nu}$ is a single-valued analytic branch of some $(f^m)^{-1}$ in domain $D$. Then $\{f_\nu^{-m}:m\geq 1\}_{m,\nu}$ is normal in $D$. \label{normalInverseBranches}
\end{lemma}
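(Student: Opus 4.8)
The plan is to deduce normality from Montel's theorem, stated above in the form: a family of analytic maps $D\to\RS$ omitting three common values is normal. So the goal is to produce three points of $\RS$ lying in the image of no $f_\nu^{-m}$. The key observation is a pullback identity: if $a=f_\nu^{-m}(z)$ for some $z\in D$, then applying $f^m$ to both sides of $f^m\circ f_\nu^{-m}=\mathrm{id}_D$ gives $f^m(a)=z\in D$, and differentiating gives $(f^m)'(a)\,(f_\nu^{-m})'(z)=1$, so $a$ is not a critical point of $f^m$, hence in particular not a critical point of $f$.

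From this I would extract common omitted values in two ways. First, every critical point of $f$ is omitted by every $f_\nu^{-m}$; since $\deg f\geq 2$ this already gives at least two. Second, any point $a$ whose forward orbit $\{f(a),f^2(a),\dots\}$ is disjoint from $D$ is omitted by every $f_\nu^{-m}$ (otherwise $f^m(a)\in D$ for some $m\geq 1$). In the situation that actually arises in this paper, $D$ is a small topological disk disjoint from the postcritical set $\mathrm{PC}(f)=\overline{\bigcup_{n\geq 1}f^n(\mathrm{Crit}(f))}$; then the forward orbit of every critical point lies in $\RS\setminus D$, and for a parabolic map there is a critical point inside a parabolic basin whose forward orbit is \emph{infinite} (it converges to the parabolic cycle without ever reaching it). By the second observation this produces infinitely many common omitted values, and Montel's theorem finishes the argument.

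For an arbitrary domain $D$ supporting single-valued branches of \emph{every} $(f^m)^{-1}$, the two critical points still give two common omitted values and the remaining work is to produce a third. Equivalently, one must show that $K'\coloneqq\{a:f^j(a)\notin D\text{ for all }j\geq 0\}$, the maximal forward-invariant subset of the nonempty closed set $\RS\setminus D$, together with $\mathrm{Crit}(f)$ cannot consist of only two points; the hard case is a postcritically finite map of degree two whose two critical points form one periodic cycle, such as $z\mapsto z^{2}$. Here I would argue that the hypothesis — single-valued inverse branches of \emph{all} iterates live on $D$ — forces $\RS\setminus D$ to be large: since $D$ avoids the critical values of $f$ and cannot "surround" $J(f)$, one has $J(f)\not\subseteq\overline D$, and $J(f)\setminus\overline D$ then carries a nontrivial forward-invariant set (for instance a periodic orbit, or a forward-invariant Cantor subset of $J(f)$ disjoint from $D$), supplying the third omitted value. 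Making this last step airtight is the only real obstacle; the Montel reduction and the pullback identity are routine.
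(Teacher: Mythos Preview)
The paper does not actually prove this lemma; it is stated as background, labelled a ``consequence of Montel's Theorem'', and used only in the situation where $D$ is a small disk disjoint from the forward orbit of every critical point (this is exactly what Lemma~\ref{inverseBranches} arranges). So your approach --- Montel plus three common omitted values --- is precisely what the paper has in mind, and your pullback identity showing that critical points of $f$ lie outside every $f_\nu^{-m}(D)$ is the right first step. For the case the paper needs, your argument is correct: when $D$ avoids the forward critical orbit, that entire orbit lies in your set $K'$, and for a parabolic map some critical orbit is infinite (it converges to the parabolic cycle without ever landing on it), so $K'$ is infinite and Montel applies.

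Your sketch for the fully general statement, however, is aimed in the wrong direction. It is not merely that ``making the last step airtight is the only obstacle'' --- in the hard case there is \emph{no} third common omitted value to find. Take $f(z)=z^2$ and $D=\bbC\setminus(-\infty,0]$: every branch of every $(f^m)^{-1}$ is single-valued on this simply connected domain, yet the only points omitted by the whole family are $0$ and $\infty$, since for any $a\in\bbC^*$ one has $a^{2^m}\in D$ for some $m\ge1$ and then one of the $2^m$-th root branches returns $a$. Your claimed inclusion $J(f)\not\subseteq\overline D$ also fails here, as $S^1\subset\overline D=\RS$, so the proposed mechanism for producing a third omitted value breaks down. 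The family is nonetheless normal because it is locally bounded away from $0$ and $\infty$, but this requires a direct estimate rather than a third global omitted value. A correct proof of the lemma in full generality either handles the exceptional maps conjugate to $z\mapsto z^{\pm d}$ (the only ones with $|\mathrm{Crit}(f)\cup P(f)|\le 2$) by such a direct argument, or exploits that normality is local and restricts to small disks where additional omitted values become available.
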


Recall a map $f$ is \textit{locally eventually onto}, or topologically exact, on a set $X$ if for every nonempty open set $U$, there exists $n\in\bbN$ such that $f^{n}(U)\supset X$. On any neighborhood $U$ of  $z\in J(f)$,  the family $\cF$ is not normal. By Montel's Theorem, it follows that 
$$J\subset \bigcup_{n=0}^{\infty} f^{n}(U).$$
Using this fact and compactness of $J(f)$, one can show that every rational map is locally eventually onto its Julia set.

\begin{lemma}
    The rational map $f:J(f)\rightarrow J(f)$ is locally eventually onto. \label{LEO}
\end{lemma}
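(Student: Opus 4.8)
The plan is to combine the Montel-type fact quoted just above with a contraction argument at a repelling periodic point, using compactness of $J(f)$ at the end. The one subtlety to engineer is that the naive combination of that fact with compactness only yields a \emph{finite} union of forward images covering $J(f)$, whereas ``locally eventually onto'' demands a single iterate; nesting the images is what fixes this.

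First I would set up notation: a nonempty relatively open set in $J(f)$ has the form $U=U'\cap J(f)$ with $U'\subseteq\RS$ open and $U'\cap J(f)\neq\emptyset$, and since $J(f)$ is completely invariant one has $f^N(U)=f^N(U')\cap J(f)$ for every $N$; so it suffices to produce $N$ with $f^N(U')\supseteq J(f)$. Applying the quoted consequence of Montel's theorem gives $J(f)\subseteq\bigcup_{n\ge 0}f^n(U')$, and compactness of $J(f)$ together with openness of the sets $f^n(U')$ yields $J(f)\subseteq\bigcup_{n=0}^{N_0}f^n(U')$ for some $N_0$. This is where the naive argument stalls, since the $f^n(U')$ need not be nested.

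To upgrade this I would invoke density of repelling periodic points in $J(f)$: pick a repelling periodic point $p\in U'$ of period $q$ and put $g=f^q$, a rational map of degree $d^q\ge 2$ with $J(g)=J(f)$ and with $p$ a repelling fixed point. Because $|g'(p)|>1$, the local inverse branch of $g$ fixing $p$ is a strict contraction on a small disc, so there is an open disc $V_0$ with $p\in V_0\subseteq U'$ and $\overline{V_0}\subseteq g(V_0)$; in particular $g^n(V_0)=f^{qn}(V_0)$ is an increasing sequence of open sets. Applying the quoted Montel fact to $g$ and the neighborhood $V_0$ of $p\in J(g)$ gives $\bigcup_{n\ge 0}g^n(V_0)\supseteq J(g)=J(f)$, and now compactness of $J(f)$ applied to this \emph{increasing} open cover produces a single $n$ with $J(f)\subseteq g^n(V_0)=f^{qn}(V_0)\subseteq f^{qn}(U')$. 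Taking $N=qn$ finishes the proof.

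The main obstacle is precisely the passage from ``$J(f)$ is covered by finitely many images $f^n(U')$'' to ``$J(f)$ is covered by one image''; the device that resolves it is replacing $f$ by the iterate $g=f^q$ at a repelling periodic point of $U'$, which makes the forward images nested. The remaining ingredients — density of repelling periodic points, the contraction estimate for the inverse branch near $p$, and the final compactness step — are all routine, and one should simply be a little careful with complete invariance when translating between $U$ and $U'$.
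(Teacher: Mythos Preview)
Your argument is correct. The paper does not actually give a proof of this lemma; it only records the one-line hint ``using this fact and compactness of $J(f)$'' immediately before the statement. You have correctly identified the gap in that hint---compactness applied to the non-nested cover $\{f^n(U')\}$ only yields a finite union of images, not a single iterate---and you close it in the standard textbook way, by passing to an iterate $g=f^q$ fixing a repelling periodic point $p\in U'$ so that the images $g^n(V_0)$ are nested. The auxiliary facts you use (density of repelling periodic points in $J(f)$, $J(f^q)=J(f)$, and the complete-invariance computation $f^N(U'\cap J(f))=f^N(U')\cap J(f)$) are all routine. So your proof is a correct fleshing-out of the paper's sketch; the only genuine addition beyond what the paper indicates is the repelling-periodic-point nesting step, and that is exactly the point the sketch suppresses.
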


\subsection{Hyperbolicity and the hyperbolic metric}
Let $(Y, d)$ be a compact metric space and $f:Y \to Y$ be continuous. We call $f$ \textit{(locally) uniformly distance-increasing} on $(Y,d)$ if there exists an $\epsilon>0$ and $L >1$ such that 
\begin{align*}
d(fx ,fy) \geq Ld(x,y)
\end{align*}
whenever $x,y \in Y$ and $d(x,y)<\epsilon$. We call $L$ the \textit{expansion constant}. We say $f$ is \textit{(locally) distance-increasing} on $(Y, d)$ if there exists an $\epsilon>0$ such that 
\begin{align*}
d(fx ,fy) > d(x,y)
\end{align*}
whenever $x,y \in Y$ and $d(x,y)<\epsilon$.
Let $M$ be a compact differentiable manifold without boundary and $f:M\rightarrow M$ be a continuous map. Let $X \subseteq M$ be $f$-invariant and suppose that the restriction of $f$ to $X$ is a $C^{1}$-endomorphism. We say that $f$ is \textit{hyperbolic} on $X$ if for any Riemannian metric, $g$, there exists a $c>0$ and $L>1$ such that 
\begin{align*}
    \|Df^{n}v\|_{g}\geq c L^{n}\|v\|_{g} \quad \text{for all $v\in TX$ and $n>0$.}
\end{align*}
Let $d_{g}$ be the Riemannian distance function. If $X$ is compact and $f$ is hyperbolic on $X$ with respect to a Riemannian metric $g$, it follows that there exists $m\in \mathbb{N}$ such that $f^{m}$ is uniformly distance-increasing on $(X, d_g)$.

Suppose now that $f$ is a rational map. Let $C(f)$ denote the set of critical points of $f$, then the \textit{postcritical set} of $f$ is
\begin{align}
    P(f) \coloneqq \overline{\bigcup_{c\in C(f),n>0} f^{(n)}(c)}.
\end{align}
When $P(f)\cap J(f) =\emptyset$, then for any Riemannian metric $f:J(f)\rightarrow J(f)$ is hyperbolic, and there exists an $m>0$ such that $f^{m}$ is uniformly distance-increasing. If we want to ensure $f$ itself is uniformly distance-increasing, we need the \textit{hyperbolic metric for f}.

Let $\mathbb{D}$ be the unit disk. The \textit{hyperbolic metric} on $\mathbb{D}$ is the unique metric, up to multiplication by a constant, that is invariant under every conformal automorphism of $\mathbb{D}$. A subset $\Delta \subset \mathbb{C}$ is called a \textit{hyperbolic region} if $\RS \setminus \Delta$ contains at least three points. Given a hyperbolic region $\Delta$, there exists a unique metric on $\Delta$ whose pullback by any holomorphic universal covering map $h: \mathbb{D} \to \Delta$ is the hyperbolic metric on $\mathbb{D}$. This metric is called the \textit{hyperbolic metric} on $\Delta$, denoted $\rho_{\Delta}(z)|dz|$. 

Whenever $P(f)$ contains at least three points, we can define the \textit{hyperbolic metric for $f$} to be the hyperbolic metric on the hyperbolic region $\RS \setminus P(f)$ and we write $\rho_H:= \rho_{\RS \setminus P(f)}$. As a consequence of the generalized Schwarz-Pick Lemma, a rational map $f$ is expanding on $f^{-1}(\RS \setminus P(f))$ with respect to this metric.
\begin{thm} 
  Let $f$ be a rational map of degree $d\geq 2$ such that $J(f)\cap P(f)=\emptyset$. Then  $\rho_{H}|dz|$ is a uniformly expanding metric on $J(f)$.
\end{thm}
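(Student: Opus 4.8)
The plan is to realize $f$ as a holomorphic covering map between two nested hyperbolic regions and then combine two standard facts: a covering is a local isometry for the hyperbolic metrics, and the hyperbolic metric strictly increases under proper inclusion. Write $\Delta := \RS \setminus P(f)$, so that $\rho_H = \rho_\Delta$, and set $\Delta' := \RS \setminus f^{-1}(P(f))$. Since the postcritical set is forward invariant, $f(P(f)) \subseteq P(f)$, so $P(f) \subseteq f^{-1}(P(f))$ and hence $\Delta' = f^{-1}(\Delta) \subseteq \Delta$. First I would check that this inclusion is \emph{proper}: if $f^{-1}(P(f)) = P(f)$ then $P(f)$ would be a closed, completely invariant set for a map of degree $\geq 2$, hence either contain at most two points or contain $J(f)$; the first is excluded because $\rho_H$ is only defined when $|P(f)|\geq 3$, and the second is impossible since $P(f)\cap J(f)=\emptyset$ while $J(f)$ is infinite. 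Thus $\Delta'\subsetneq\Delta$.

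Next I would verify that $f\colon \Delta' \to \Delta$ is a holomorphic covering of degree $d$. It is surjective and proper onto $\Delta$ because $f\colon\RS\to\RS$ is surjective and proper and $f^{-1}(\Delta)=\Delta'$; and it is unramified over $\Delta$ because every critical value of $f$ lies in $P(f)$, hence outside $\Delta$. A proper unramified holomorphic map is a covering map. By the generalized Schwarz--Pick lemma (a holomorphic covering between hyperbolic Riemann surfaces is a local isometry for the hyperbolic metrics, both being quotients of $\mathbb{D}$ with one Fuchsian group nested in the other), this gives
\[
\rho_\Delta\big(f(z)\big)\,|f'(z)| = \rho_{\Delta'}(z) \qquad \text{for all } z\in\Delta'.
\]
On the other hand, since $\Delta'\subsetneq\Delta$, the strict form of the monotonicity (Schwarz--Pick) lemma yields $\rho_{\Delta'}(z)>\rho_\Delta(z)$ for every $z\in\Delta'$; alternatively this strict expansion is exactly the statement quoted in the paragraph preceding the theorem. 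Combining the two, $f$ is strictly expanding, pointwise, at every point of $\Delta'$ with respect to $\rho_H=\rho_\Delta$.

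Finally I would upgrade this pointwise bound to uniform expansion on $J(f)$ using compactness. The key observation is $J(f)\subseteq\Delta'$: if $z\in J(f)$ then $f(z)\in J(f)$ by complete invariance of the Julia set, so $f(z)\notin P(f)$, i.e.\ $z\notin f^{-1}(P(f))$. Hence the ratio
\[
\psi(z) := \frac{\rho_{\Delta'}(z)}{\rho_\Delta(z)} = \frac{\rho_\Delta\big(f(z)\big)\,|f'(z)|}{\rho_\Delta(z)}
\]
is a smooth, positive function which is $>1$ on $\Delta'$, and $J(f)$ is a compact subset of $\Delta'$, so $\lambda := \min_{z\in J(f)}\psi(z) > 1$. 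This gives $\|Df_z v\|_{\rho_H}\geq \lambda\,\|v\|_{\rho_H}$ for all $z\in J(f)$ and $v\in T_z\RS$, which is the asserted uniform expansion; by compactness of $J(f)$ one then also obtains that $f$ is uniformly distance-increasing for the distance induced by $\rho_H$.

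The main obstacle — indeed essentially the only non-routine point — is the strict inequality $\rho_{\Delta'}>\rho_\Delta$, which requires both the verification $\Delta'\subsetneq\Delta$ (i.e.\ that $P(f)$ is not completely invariant, using $|P(f)|\geq 3$ and $P(f)\cap J(f)=\emptyset$) and the strict version of the Schwarz--Pick lemma. Everything else is a formal assembly of standard facts about the postcritical set, covering spaces, and the hyperbolic metric.
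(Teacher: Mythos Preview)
The paper does not actually supply a proof of this theorem; it is quoted as a known fact, attributed in the preceding sentence to the generalized Schwarz--Pick Lemma. Your proposal is correct and is exactly the standard argument the paper is invoking: realize $f\colon f^{-1}(\RS\setminus P(f))\to \RS\setminus P(f)$ as an unramified holomorphic covering (hence a local isometry for the respective hyperbolic metrics), use strict monotonicity of the hyperbolic metric under the proper inclusion $f^{-1}(\RS\setminus P(f))\subsetneq \RS\setminus P(f)$, and then pass to a uniform bound via compactness of $J(f)$.
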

Parabolic rational maps do not satisfy the condition of the above theorem because the set of rationally indifferent points $\Omega\neq \emptyset$ and $\Omega\subset P(f)\cap J(f)$. The function $\rho_H$ blows up at $P(f)$ and thus does not define a metric on all of $J(f)$. Although we do not have a uniformly expanding metric in the parabolic case, we do have the positive expansivity property.
\begin{thm}[\cite{DU91}, Theorem 4] \label{thm:expansive} 
 Let $f$ be a rational map of degree $d\geq 2$ such that $J(f)\cap C(f)=\emptyset$. Then $f$ is positively expansive.
\end{thm}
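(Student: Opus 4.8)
The plan is to argue by contradiction, converting a failure of positive expansivity into a contradiction with the (weak) expanding behaviour of $f$ on $J(f)$. Suppose $f$ is not positively expansive on $J:=J(f)$; then for every $\beta>0$ there exist $x\neq y$ in $J$ with $d(f^nx,f^ny)<\beta$ for all $n\geq 0$, and I fix such a pair for $\beta$ small (the smallness depending only on $f$), aiming to deduce $x=y$. Since $C(f)\cap J=\emptyset$ and both sets are compact, there is a uniform scale $\delta_0>0$ such that $f$ is injective on $B(z,\delta_0)$ for every $z\in J$ and $0<\lambda_-\le|f'|_{\mathrm{sph}}\le\lambda_+<\infty$ on $\overline{B(J,\delta_0)}$; by the Koebe quarter theorem each $z\in J$ then has a univalent inverse branch $f^{-1}_z\colon B(f(z),r)\to B(z,\delta_0)$ with $f^{-1}_z(f(z))=z$ and $r=\tfrac{1}{4}\lambda_-\delta_0$. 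Taking $\beta<r$, along the orbits of $x$ and $y$ I compose $\psi_n:=f^{-1}_{x}\circ f^{-1}_{fx}\circ\cdots\circ f^{-1}_{f^{n-1}x}$: each $\psi_n$ is an inverse branch of $f^n$, it is defined at $f^nx$ and at $f^ny$ (because $d(f^jx,f^jy)<\beta<r$ keeps $f^jy$ in the relevant domain at every step), and $\psi_n(f^nx)=x$, $\psi_n(f^ny)=y$.

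The key step is the claim that $\psi_n$ extends, with uniformly bounded distortion, to a ball $B(f^nx,\rho)$ of radius $\rho>0$ independent of $n$. Granting this, the distortion estimate gives $d(x,y)\le C\,|(f^n)'(x)|^{-1}\,d(f^nx,f^ny)\le C\beta\,|(f^n)'(x)|^{-1}$ for all $n$, so $|(f^n)'(x)|$ is bounded, and likewise $|(f^n)'(y)|$. A point of $J$ whose derivative products stay bounded must be pre-parabolic: using that $f$ is uniformly expanding on iterates outside any fixed neighbourhood $U$ of $\Omega$ (this is the Schwarz--Pick lemma for the hyperbolic metric $\rho_H$ of $\RS\setminus P(f)$, which is comparable to $|\,\cdot\,|_{\mathrm{sph}}$ on the compact set $J\setminus U$), such an orbit can leave $U$ only boundedly often, hence is eventually trapped in $U$; and since on $J$ near a parabolic point the map is weakly repelling (Leau--Fatou flower theorem), an orbit trapped near $\Omega$ must eventually lie on $\Omega$. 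Finally, two distinct pre-parabolic points whose orbits $\beta$-shadow one another must coincide: their orbits eventually both hit $\Omega$, at the same point of $\Omega$ once $\beta$ is smaller than the separation of the finite set $\Omega$, and then injectivity of $f$ on $\delta_0$-balls pulls this identity back to give $x=y$ — contradicting $x\neq y$.

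I expect the main obstacle to be precisely the claim of the previous paragraph: controlling the composed inverse branches $\psi_n$ through arbitrarily long sojourns of the orbit near $\Omega$. Away from $\Omega$ there is no difficulty — in the metric $\rho_H$ the branches $f^{-1}_z$ are contractions, so the $\psi_n$ are non-expanding and their $\rho_H$-domains do not shrink — but $\rho_H$ blows up at $\Omega$, so near a parabolic point $\omega$ one must instead use the local normal form $f(\zeta)=\zeta+a\zeta^{m+1}+O(\zeta^{m+2})$: the Julia set lies in the repelling petals, and the asymptotics $|f^k(\zeta)-\omega|\asymp k^{-1/m}$ for orbits in a repelling petal, together with $|f'(\zeta)|\ge 1$ there, show that a long sojourn near $\omega$ neither collapses the inverse-branch domains nor drives $|(f^n)'|$ to $0$ (in fact a deep sojourn expands). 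Patching the $\rho_H$-estimate away from $\Omega$ to the flower-theorem estimate near $\Omega$ into a single uniform statement — equivalently, establishing the "weak expansion" of $f$ on all of $J$ — is the technical heart of the theorem (this is where the work of Denker and Urba\'nski in \cite{DU91} is concentrated); the remaining steps above are then soft. One can alternatively package the conclusion through Lemma \ref{normalInverseBranches}: once the $\psi_n$ are known to be defined on balls of uniform radius, normality lets one extract a convergent subsequence $\psi_{n_j}\to\psi$ on a fixed ball around a limit of $f^{n_j}x$, and Hurwitz's theorem forces $\psi$ to be constant (giving $x=y$) unless $|(f^{n_j})'(x)|$ stays bounded, which returns us to the pre-parabolic analysis.
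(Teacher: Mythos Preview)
The paper does not supply its own proof of this statement; it is quoted as background from \cite{DU91}, so there is no in-paper argument to compare against.

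Your outline has the right architecture and you correctly locate the crux: the claim that the composed inverse branches $\psi_n$ extend, with bounded distortion, to balls of radius $\rho>0$ independent of $n$, uniformly through arbitrarily long passages near $\Omega$. But you do not prove this claim --- you name the two ingredients (Schwarz--Pick contraction in $\rho_H$ away from $\Omega$; Leau--Fatou petal asymptotics near $\Omega$) and then explicitly defer the patching to \cite{DU91}. In the parabolic case that patching \emph{is} the theorem: $\rho_H$ blows up at $\Omega$ and is not uniformly comparable to the spherical metric there, so contraction in $\rho_H$ does not by itself prevent the spherical radius of the inverse-branch domain from collapsing during a deep sojourn, and the petal estimates must be made uniform over all entry/exit configurations. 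As written, your proposal is a roadmap rather than a proof.

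A smaller point: the step ``$|(f^n)'(x)|$ bounded $\Rightarrow$ the orbit is eventually trapped near $\Omega$'' is not as soft as you present it. You need that visits away from $\Omega$ each contribute a definite spherical expansion factor \emph{and} that sojourns near $\Omega$ do not erode the accumulated product; the second assertion again requires the petal analysis (for $z$ in a repelling direction $|f'(z)|$ can dip below $1$ in the spherical metric, so one cannot simply say ``near $\Omega$ the factors are $\geq 1$''). Your alternative route via Lemma~\ref{normalInverseBranches} and Hurwitz is cleaner once the uniform-domain claim is established, but it presupposes exactly the same missing step.
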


Thus, the class of parabolic rational maps is characterized by the following corollary.
\begin{cor}[\cite{DU91}, Corollary 5]
    A positively expansive rational map $f$ is not uniformly expanding on $J(f)$ if and only if $\Omega \neq \emptyset$.
\end{cor}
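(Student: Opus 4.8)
The plan is to prove the two implications separately. The implication ``$\Omega\neq\emptyset\Rightarrow f$ is not uniformly expanding'' is elementary and does not even use positive expansivity. Suppose $p\in\Omega$, so that $f^n(p)=p$ and $\lambda:=(f^n)'(p)$ is a root of unity, in particular $|\lambda|=1$. If $f$ were uniformly expanding on $J(f)$ --- equivalently, hyperbolic on $J(f)$ in the sense defined above --- there would be a Riemannian metric, which near $p$ we write as $\rho^2|dz|^2$, and constants $c>0$, $L>1$ with $\|Df^k(x)v\|\ge cL^k\|v\|$ for all $x\in J(f)$, all $v\in T_x\RS$, and all $k\ge1$. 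Taking $x=p$ and $k=jn$, the chain rule gives $(f^{jn})'(p)=\lambda^{j}$; since $f^{jn}(p)=p$, the expansion ratio at $p$ over $jn$ steps equals $(\rho(p)/\rho(p))\,|\lambda|^{j}=1$ for every $j$, contradicting $cL^{jn}\to\infty$. Hence $f$ is not uniformly expanding. (A parabolic cycle is a neutral periodic orbit in $J(f)$, and the argument only uses that its multiplier has modulus one.)

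For the converse I would argue the contrapositive: assuming $f$ is positively expansive and $\Omega=\emptyset$, I show $f$ is uniformly expanding. First, a positively expansive rational map has $C(f)\cap J(f)=\emptyset$. Indeed, a critical point $c\in C(f)\cap J(f)$ has local degree $\ell\ge2$, and since $J(f)$ is perfect and $f^{-1}(J(f))=J(f)$, one can find, arbitrarily close to $c$, distinct points $x\neq y$ of $J(f)$ with $f(x)=f(y)$; then $d(f^kx,f^ky)=0$ for all $k\ge1$ while $x,y$ can be taken within any prescribed $\beta$ of each other, violating positive expansivity. (This is the converse of Theorem~\ref{thm:expansive}.) By the theorem above asserting that $\rho_H|dz|$ is uniformly expanding on $J(f)$ whenever the postcritical set $P(f)$ is disjoint from $J(f)$, it now suffices to prove $P(f)\cap J(f)=\emptyset$. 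Suppose not. Since $C(f)\cap J(f)=\emptyset$ and $J(f)$ is completely invariant, no forward image $f^k(c)$ of a critical point ($k\ge1$) lies in $J(f)$, so a point of $P(f)\cap J(f)$ must be an accumulation point of a critical orbit; as there are only finitely many critical points, there exist a critical point $c$ and a point $z\in J(f)$ with $z\in\omega(c)$, and necessarily $c\in F(f)$.

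The decisive step is the structure theory of Fatou components. By Sullivan's No Wandering Domains Theorem the Fatou components meeting the orbit of $c$ are eventually periodic, so some $f^N(c)$ lies in a periodic Fatou component $V$ of period $p$, and $\omega(c)\subseteq\bigcup_{i=0}^{p-1}\overline{f^i(V)}$. By the classification of periodic Fatou components, $V$ is a component of an attracting basin, a component of a parabolic basin, a Siegel disk, or a Herman ring. If $V$ lies in the basin of an attracting cycle, then $f^N(c)$ is attracted to that cycle and $\omega(c)$ is contained in it, hence in $F(f)$. If $V$ is a Siegel disk or a Herman ring, then $f^p|_V$ is conformally conjugate to an irrational rotation, so the orbit of $f^N(c)$ stays in a compact subset of $V$, and again $\omega(c)\subset F(f)$. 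Both of these contradict $z\in\omega(c)\cap J(f)$. Therefore $V$ is a component of a parabolic basin, so $\omega(c)$ is the parabolic periodic orbit carried on $\partial V$; its points have multiplier a root of unity, hence lie in $\Omega$, so $\Omega\neq\emptyset$, contrary to our assumption. This forces $P(f)\cap J(f)=\emptyset$, whence $f$ is uniformly expanding, completing the contrapositive.

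The main obstacle is concentrated in the last paragraph: it rests on Sullivan's theorem, the classification of periodic Fatou components, and the local normal form in each case --- substantial inputs --- whereas everything else is bookkeeping once one has reduced, via the hyperbolic-metric theorem, to proving $P(f)\cap J(f)=\emptyset$. A secondary point meriting care is the converse of Theorem~\ref{thm:expansive} used above to pass from positive expansivity to $C(f)\cap J(f)=\emptyset$. I note in passing that, combined with the first implication, the corollary shows that a positively expansive rational map without a parabolic cycle has uniformly hyperbolic Julia set --- in particular no Cremer points and no Siegel disks --- although this is not needed in the proof.
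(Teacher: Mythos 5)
Your proof is correct, and since the paper only cites this statement from \cite{DU91} without reproducing an argument, there is no in-paper proof to diverge from; what you give is the standard argument and it uses exactly the ingredients the paper assembles (the multiplier-one obstruction at a parabolic cycle for one direction, and for the other the reduction to $P(f)\cap J(f)=\emptyset$ followed by the theorem that $\rho_H|dz|$ is uniformly expanding when $J(f)\cap P(f)=\emptyset$). Note that the heavy step you flag --- Sullivan plus the classification of periodic Fatou components, yielding that a Fatou orbit accumulating on $J(f)$ must accumulate on a rationally indifferent cycle --- is stated verbatim in the paper just before Lemma \ref{lem:inversebranches}, so you could simply cite that fact rather than re-derive it case by case.
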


By Sullivan's no wandering domain theorem and the classification of Fatou components, if an orbit of a point in the Fatou set accumulates on $J(f)$, then it accumulates on a rationally indifferent periodic point. Denker and Urba\'nski \cite{DU91} restated this fact in the next lemma which they proved in a fixed Riemannian metric $d_g$. The statement trivially extends to any metric which is uniformly equivalent to $d_g$. We write $B(\Omega,\alpha; d) = \bigcup_{p \in \Omega} B(p, \alpha; d)$.

\begin{lemma}[\cite{DU91}, Lemma 2] \label{lem:inversebranches}
  Let $d$ be a metric on $J(f)$ which is equivalent to a Riemannian distance function on $J(f)$. If $J(f)$ contains no critical point of $f$ then for every $\alpha>0$ there exists $\delta=\delta(\alpha)>0$ such that for every $z\in J(f)\setminus B(\Omega,\alpha; d)$, the ball $B(z,2\delta; d)$ does not intersect
the forward trajectory of any critical point of $f$. In particular all analytic inverse branches of $f^{n}:\RS\rightarrow \RS$ are well defined on $B(z,2\delta; d)$ for every $n \in \mathbb N$. \label{inverseBranches}
\end{lemma}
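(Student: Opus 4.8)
The plan is to derive the lemma from the structural fact recalled just above---that any orbit of a Fatou point accumulating on $J(f)$ must accumulate on a rationally indifferent periodic point---together with the standing hypothesis $C(f)\cap J(f)=\emptyset$. The one substantive step is the identity $P(f)\cap J(f)=\Omega$; once this is in hand, the rest is a separation-of-compact-sets argument followed by a textbook application of the monodromy theorem, and I do not anticipate a genuine obstacle.

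First I would prove $P(f)\cap J(f)=\Omega$. Since $J(f)$ contains no critical point, $C(f)\subseteq F(f)$, and as $F(f)$ is forward invariant the set $\bigcup_{m\ge 1}f^m(C(f))$ remains in $F(f)$. Hence every $w\in P(f)\cap J(f)=\big(\overline{\bigcup_{m\ge 1}f^m(C(f))}\big)\cap J(f)$ cannot itself lie in $\bigcup_{m\ge1}f^m(C(f))$ (that set is disjoint from $J(f)$), so $w$ is an accumulation point of it; since $C(f)$ is finite, $w$ is then an accumulation point of the forward orbit of a single critical point, an orbit contained in $F(f)$, and so $w\in\Omega$ by the quoted consequence of Sullivan's theorem. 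This gives $P(f)\cap J(f)\subseteq\Omega$, and the reverse inclusion $\Omega\subseteq P(f)\cap J(f)$ was already noted, so equality holds.

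Next comes the compactness step. By uniform equivalence we may replace $d$ by (the restriction near $J(f)$ of) a fixed Riemannian distance function on $\RS$, still denoted $d$, since admissible constants $\alpha,\delta$ for one metric transfer to comparable constants for the other. Fix $\alpha>0$; if $J(f)\setminus B(\Omega,\alpha;d)=\emptyset$ there is nothing to prove, so assume $K_\alpha:=J(f)\setminus B(\Omega,\alpha;d)$ is nonempty. Then $K_\alpha$ is compact (closed in the compact set $J(f)$), the set $C(f)\cup P(f)$ is compact, and $K_\alpha\cap(C(f)\cup P(f))=\emptyset$: indeed $K_\alpha\cap C(f)=\emptyset$ because $K_\alpha\subseteq J(f)$ and $C(f)\cap J(f)=\emptyset$, while $K_\alpha\cap P(f)=(P(f)\cap J(f))\setminus B(\Omega,\alpha;d)=\Omega\setminus B(\Omega,\alpha;d)=\emptyset$ by the previous paragraph. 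Hence $r:=\mathrm{dist}(K_\alpha,\,C(f)\cup P(f))>0$. Choosing $\delta=\delta(\alpha)$ with $2\delta<r$ and small enough that every $d$-ball of radius $2\delta$ in $\RS$ is a topological disk (possible since $\RS$ with a Riemannian metric is a smooth surface), we conclude that for every $z\in K_\alpha$ the ball $B(z,2\delta;d)$ is simply connected and disjoint from $\bigcup_{m\ge 0}f^m(C(f))\subseteq C(f)\cup P(f)$. This is the first assertion.

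For the last assertion, note that the critical values of $f^n$ lie in $\bigcup_{m=1}^n f^m(C(f))$, since a critical point of $f^n$ is carried by some $f^j$, $0\le j<n$, to a critical point of $f$. Therefore $B(z,2\delta;d)$ contains no critical value of any iterate $f^n$. Since $f^n:\RS\to\RS$ is a holomorphic branched covering of degree $d^n$, unbranched over the complement of its critical values, every germ of $(f^n)^{-1}$ at a point of the simply connected set $B(z,2\delta;d)$ continues analytically along every path in $B(z,2\delta;d)$ and, by the monodromy theorem, defines a single-valued holomorphic branch of $(f^n)^{-1}$ on all of $B(z,2\delta;d)$; equivalently, the covering trivializes over this disk into its $d^n$ sections. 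These are exactly the analytic inverse branches of $f^n$, so all of them are defined on $B(z,2\delta;d)$. The only point needing mild care, beyond the identity $P(f)\cap J(f)=\Omega$, is this passage between the abstract metric on $J(f)$ and honest disks in $\RS$ supporting the inverse branches, which the assumed uniform equivalence with a Riemannian metric renders harmless.
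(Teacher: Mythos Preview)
The paper does not give its own proof of this lemma; it is quoted verbatim from \cite[Lemma~2]{DU91} and left unproved. Your argument is correct and is exactly the standard route one would expect the original reference to take: the identification $P(f)\cap J(f)=\Omega$ via the Sullivan/classification fact stated just above the lemma, a compactness separation of $K_\alpha=J(f)\setminus B(\Omega,\alpha;d)$ from $P(f)\cup C(f)$, and then monodromy on a simply connected ball avoiding all critical values. One small remark: your derivation of $P(f)\cap J(f)\subseteq\Omega$ implicitly uses that a critical orbit landing in a Siegel disk or Herman ring stays on a compact invariant subset of the Fatou set and hence does not accumulate on $J(f)$; this is true and is precisely the content of the sentence you invoke, but it is worth being aware that this step also rules out rotation domains under the hypothesis $C(f)\cap J(f)=\emptyset$, which is consistent with the parabolic setting of the paper.
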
 

\subsection{Metric expansion properties and the Milnor metric} \label{s.milnor} We now discuss metric expansion properties, and we refer the reader to \cite[Appendix B]{milnor2critical} for more details. First, we discuss some natural metrics.  For parabolic rational maps, the Fatou set is always non-empty, so we can analytically conjugate $f$ to a map $g$ whose Julia set $J(g)$ is contained in $\mathbb{C}$. Then the Euclidean metric is a smooth metric on $J(g)$.   For a parabolic rational map, the hyperbolic metric is only well-defined and expanding on
$f^{-1}(\RS \setminus P(f))$. We thus cannot use the hyperbolic metric for an expanding metric at the parabolic points or their preimages. We could consider the Euclidean metric on $J(f)$. Since $J(f)$ contains no critical points,  we know that there exists $c>0$ so that $|f'(z)| \geq c >0$ for all $z \in J$, but we do not have any better global lower bound. However, as a consequence of the Leau-Fatou Flower Theorem, in a  punctured neighborhood of a parabolic fixed point the Euclidean derivative is strictly greater than one.  Milnor proved that a parabolic rational map $f$ admits a metric such that $f$ is distance-increasing on $J(f)$. The idea is to combine the desirable properties of the hyperbolic and Euclidean metrics. Milnor's metric is defined piecewise, as a rescaling of the Euclidean metric near the rationally indifferent periodic points and the hyperbolic metric away from them. This construction is the key point which proves the following lemma.

\begin{lemma}[\cite{milnor2critical}, Lemma B.3]
    Let $f$ be a rational map restricted to its Julia set J(f). There exists a metric $d$ so that $(J(f), d)$ is distance-increasing if and only if f has no critical points in $J(f)$.
\end{lemma}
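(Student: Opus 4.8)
I would prove the two implications of this equivalence separately; the forward one is quick. Suppose $c\in J(f)$ is a critical point; I claim that then no metric $d$ on $J(f)$ inducing its topology can make $f$ distance-increasing. Since $J(f)$ is completely invariant and perfect, $f(c)\in J(f)$ is not isolated, so there are points $w\in J(f)$ arbitrarily close to---but distinct from---$f(c)$. Because $f$ has local degree at least $2$ at $c$, every such $w$ sufficiently close to $f(c)$ has two distinct preimages $x\neq y$ near $c$, and both lie in $f^{-1}(J(f))=J(f)$. Shrinking, we may take $d(x,y)<\epsilon$ for the scale $\epsilon$ in the definition of distance-increasing, while $d(fx,fy)=d(w,w)=0\not>d(x,y)>0$, a contradiction.

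For the converse, assume $f$ has no critical points in $J(f)$. By Sullivan's no-wandering-domains theorem and the classification of Fatou components recalled above, any forward critical orbit that accumulates on $J(f)$ accumulates on $\Omega$; since critical orbits lie in the Fatou set and $\Omega\subseteq P(f)$, this gives $P(f)\cap J(f)=\Omega$. I would then split into two cases. If $\Omega=\emptyset$, then $P(f)\cap J(f)=\emptyset$ and, when $\#P(f)\geq 3$, the hyperbolic metric for $f$, $\rho_H|dz|$, is uniformly expanding on $J(f)$ by the theorem on the hyperbolic metric quoted above, hence uniformly distance-increasing; the finitely many exceptional maps with $\#P(f)\leq 2$ are conjugate to $z\mapsto z^{\pm d}$, whose Julia set is a circle carrying the expanding Euclidean metric. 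So it remains to treat the case $\Omega\neq\emptyset$, where $P(f)\cap J(f)=\Omega$.

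Here the plan is to carry out Milnor's construction: build a conformal metric $\rho(z)|dz|$, defined on a neighborhood of $J(f)$, that coincides with the hyperbolic metric $\rho_H|dz|$ away from small neighborhoods $V_p$ of the parabolic points $p\in\Omega$ and, on each $V_p$, is a suitable rescaling of the Euclidean metric dictated by the Leau--Fatou local coordinates, with a monotone interpolation on the transition annuli; the induced path metric $d$ on $J(f)$ is the candidate. One then verifies the pointwise bound $\|f'(z)\|_\rho>1$ for every $z\in J(f)\setminus\Omega$ (necessarily with $\|f'(p)\|_\rho=1$ for $p\in\Omega$: around each cycle these values multiply to the modulus of the cycle multiplier, namely $1$). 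Away from $\bigcup_p V_p$ this is the strict Schwarz--Pick inequality for $\rho_H$. Near $\Omega$ one uses that, by the Leau--Fatou Flower Theorem, $J(f)$ meets a small neighborhood of each parabolic point only inside the repelling petals, where the Euclidean derivative of $f$ is strictly greater than $1$ (see \S\ref{s.milnor}); the rescaling is then chosen to accommodate the rotation and the cyclic structure of the petals along each parabolic orbit. Once this pointwise bound holds, I would upgrade to distance-increasing as follows: since $J(f)$ contains no critical point, $f$ is locally injective near $J(f)$, so for $x,y$ sufficiently $d$-close one pulls the $\rho$-geodesic from $fx$ to $fy$ back along a single-valued branch of $f^{-1}$, obtaining a path from $x$ to $y$ of strictly smaller $\rho$-length, hence $d(x,y)<d(fx,fy)$.

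The step I expect to be the main obstacle is the construction and verification near $\Omega$---in particular on the transition annuli, where one must choose the neighborhoods and the interpolation so that the inequality $\|f'(z)\|_\rho>1$ survives across the seam between the Euclidean and hyperbolic regions and for points whose image crosses between them, while remaining compatible with how $f$ permutes a parabolic cycle and rotates its petals (the individual multipliers along a cycle being unconstrained, so that a naive uniform rescaling of the Euclidean metric need not suffice). The mechanism that makes the patching possible is that $\rho_H$ blows up as $z\to P(f)\supseteq\Omega$, so just outside $V_p$ the hyperbolic density already dominates any fixed Euclidean scale; this leaves enough room to interpolate monotonically while keeping the derivative above $1$, at the cost---unavoidable, and consistent with the statement---that the resulting metric is only distance-increasing and not uniformly so. Assembling the Schwarz--Pick estimate, the Leau--Fatou petal estimate, and the interpolation, and invoking local injectivity of $f$ on $J(f)$, then yields the desired global distance-increasing metric.
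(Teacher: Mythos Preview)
The paper does not give its own proof of this lemma; it is quoted from Milnor, and the surrounding text only sketches the construction before proving the adapted statement, Lemma~\ref{distanceincreasing}, about uniform expansion on $K(\alpha)$. Your outline of the converse is essentially that same route---the hyperbolic metric $\rho_H$ away from $\Omega$, a rescaled Euclidean metric near $\Omega$, with Schwarz--Pick supplying the strict expansion off $\Omega$ and the Leau--Fatou Flower Theorem supplying it near $\Omega$---and your forward direction (two nearby preimages of a single point collapsing under $f$) is correct and standard; the paper does not include it.

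One substantive difference: you propose a monotone interpolation on transition annuli, but neither the paper's construction nor Milnor's uses one. The conformal factor is simply $\rho_H(z)$ outside $B(\Omega,\alpha)$ and a constant $M$ inside, with a jump at the boundary; since one then passes to the induced path metric, a discontinuous density is harmless, and the seam becomes a finite case-check rather than an interpolation estimate. The paper's proof of Lemma~\ref{distanceincreasing} carries out two of the four cases explicitly: for $z\in K\cap f^{-1}K$ one uses Schwarz--Pick directly, and for $z\in K\cap f^{-1}B(\Omega,\alpha)$ the constant $M$ is chosen large relative to $\rho_H$ on the finite set $f^{-1}(\Omega)\setminus\Omega$. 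The remaining cases (both $z$ and $fz$ near $\Omega$, or $z$ near $\Omega$ mapping out) use, respectively, $|f'|>1$ in the repelling petals and the blow-up of $\rho_H$ near $\Omega$---exactly the mechanism you identified. So your plan is sound but slightly over-engineered; the piecewise-constant density already does the job.
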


We now define an adaptation of the Milnor metric such that $f$ is not only distance-increasing, but also is uniformly distance-increasing on $J(f)$ for points $z$ which are away from the parabolic points.  We assume that all points in $\Omega$ are fixed points. If this is not the case, we can pass to a suitable iterate $f^n$.  Milnor's definition involves a choice of constant $M$, and the only change in our version of the Milnor metric is that we need to choose this constant more carefully.

For a metric $\rho(z) |dz|$, let $\|f'(z)\|_\rho$ denote the derivative computed in this metric. the relationship between the derivative in $|dz|$ and in $\rho(z) |dz|$ is
\begin{align} \label{derivativeformula}
    \|f'(z)\|_{\rho}= \frac{\rho(f(z))|f'(z)|}{\rho(z)}.
\end{align}

    For a parabolic rational map $f$ with degree $d\geq 2$,  let $\rho_H(z)|dz|$ be the hyperbolic metric for $f$ on the hyperbolic region $\RS\setminus P(f)$. Let $B(\Omega, \alpha;\de)$ denote the union of Euclidean metric balls around each $\omega\in \Omega$. Let $K(\alpha) = J\setminus B(\Omega, \alpha;\de)$. Let 
    \[
    r(\alpha)= \min \{ \|f'(z)\|_{\rho_H} : z \in K(\alpha)\cap f^{-1}K(\alpha) \}.
    \] 
    On $K(\alpha)\cap f^{-1}K(\alpha)$, $\|f'(z)\|_{\rho_H}>1$, so by compactness $\|f'(z)\|_{\rho_H}$ achieves a minimum on $K\cap f^{-1}K$, and we have $r(\alpha)>1$. 
We define $M= M(\alpha)$ to be
   \[
   M  := \max\left \{\frac{r(\alpha) \rho_H(z)}{|f'(z)|}: z \in f^{-1}(\Omega)\setminus \Omega \right \} +1.
   \]
  Note that for $z$ in the finite set $f^{-1}(\Omega)\setminus \Omega$, we have 
    \begin{align}
        M>\frac{r(\alpha) \rho_H(z)}{|f'(z)|}.
    \end{align}
        For each sufficiently small $\alpha>0$, the Milnor metric, $\psi_\alpha(z)|dz|$, is defined by 
        \begin{align}
        \psi_\alpha(z) = \begin{cases}
        \rho_H(z) \quad &\text{if} \  z\not \in B(\Omega, \alpha;\de)\\
        M \quad &\text{if}\  z \in B(\Omega, \alpha;\de).
        \end{cases}
        \end{align}

Let $d_\alpha$ be the path metric determined by $\psi_\alpha(z)|dz|$. We call this the Milnor metric \footnote{If we did not assume that $\Omega(f)$ has only fixed points, we can find $g=f^k$ so $\Omega^g$ contains only fixed points and the Milnor distance is defined to be
$d^f_\alpha(x,y) = \sum_{j=0}^{k-1} d^g_{\psi_\alpha}\left(f^{j}x,f^{j}y\right)$.
}

To simplify our estimates, we proceed under the assumption that $\Omega(f)$ has only fixed points. The proof that $d_{\alpha}$ is distance-increasing on $J(f)$ is in \cite[Lemma B.3]{milnor2critical}. We adapt the proof to show that $f$ is uniformly distance-expanding (w.r.t $d_\alpha$) on $K(\alpha)$ 
follows part of Milnor's proof in \cite[Lemma B.3]{milnor2critical}. 
\begin{lemma} \label{distanceincreasing}
    For $\alpha>0$ sufficiently small, $f$ is uniformly distance-increasing on $K(\alpha)$ with respect to $d_\alpha$:
    There exist $\varepsilon>0$ so that 
    \begin{align*}
        d_\alpha(fx,fy)\geq r(\alpha) d_\alpha(x,y)
    \end{align*} whenever $x, y \in K$ and $d_\alpha(x,y)<\varepsilon$.
    \label{uniform}
\end{lemma}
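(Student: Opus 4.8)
The strategy is to mimic Milnor's proof of distance-increasing in \cite[Lemma B.3]{milnor2critical}, but tracking constants carefully so as to extract the uniform constant $r(\alpha)$ on the region $K(\alpha)$. The key point is a case analysis on where the short geodesic segment from $x$ to $y$ lives relative to the set $B(\Omega,\alpha;\de)$. Because $d_\alpha$ is a path metric, it suffices to work infinitesimally: I would show that for $z$ in an appropriate region, the pointwise derivative satisfies $\|f'(z)\|_{\psi_\alpha} \geq r(\alpha)$, and then integrate along the geodesic from $x$ to $fy$ (or its preimage) to conclude. Positive expansivity (Theorem \ref{thm:expansive}) guarantees that for $\varepsilon$ small enough the image geodesic is short enough that unique inverse branches of $f$ are well-defined along it, which lets us compare lengths.

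\textbf{Key steps.} First I would fix $\alpha > 0$ small enough that the Milnor metric is defined and that $B(\Omega, \alpha; \de)$ consists of disjoint topological disks, one around each (fixed) parabolic point, on which $\psi_\alpha \equiv M$ and on which the Euclidean derivative of $f$ exceeds $1$ (Leau-Fatou). Second, compute $\|f'(z)\|_{\psi_\alpha}$ using \eqref{derivativeformula} in each of four cases according to whether $z$ and $f(z)$ lie inside or outside $B(\Omega,\alpha;\de)$: (i) $z, f(z) \notin B(\Omega,\alpha;\de)$, i.e. $z \in K(\alpha)\cap f^{-1}K(\alpha)$, where the derivative is $\|f'(z)\|_{\rho_H} \geq r(\alpha)$ by definition of $r(\alpha)$; (ii) $z \notin B$ but $f(z) \in B$, where $\|f'(z)\|_{\psi_\alpha} = M|f'(z)|/\rho_H(z)$, and since $z$ is near the finite set $f^{-1}(\Omega)\setminus\Omega$ (for $\alpha$ small), the choice of $M$ forces this to be $\geq r(\alpha)$, possibly after shrinking $\alpha$ and using continuity; (iii) $z \in B$, $f(z) \notin B$, where the derivative is $\rho_H(f(z))|f'(z)|/M$, which we need bounded below — this requires that near $\Omega$ the hyperbolic density $\rho_H$ blows up, so this factor is large; (iv) $z, f(z) \in B$, where it equals $|f'(z)| > 1$ by the flower theorem, but this is not $\geq r(\alpha)$, so this case must be handled by never letting a geodesic stay entirely inside $B$ — I would instead prove the estimate on $K(\alpha)$ only, meaning at least the endpoint $x$ lies outside $B$. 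Third, given $x, y \in K(\alpha)$ with $d_\alpha(x,y) < \varepsilon$, let $\gamma$ be the $d_\alpha$-geodesic from $f(x)$ to $f(y)$; for $\varepsilon$ small, positive expansivity plus Lemma \ref{lem:inversebranches} give a well-defined inverse branch $f^{-1}$ along $\gamma$ sending $f(x) \mapsto x$, whose image $\tilde\gamma$ joins $x$ to $y$; then $d_\alpha(x,y) \leq \ell_{\psi_\alpha}(\tilde\gamma) \leq r(\alpha)^{-1}\ell_{\psi_\alpha}(\gamma) = r(\alpha)^{-1} d_\alpha(f(x), f(y))$, using the pointwise bound along $\tilde\gamma$. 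The only subtlety is ensuring $\tilde\gamma$ stays in a region where the pointwise bound $\geq r(\alpha)$ holds; since $x \in K(\alpha)$ and $\tilde\gamma$ is short, it stays close to $x$, hence in case (i) or (ii), both of which give $\geq r(\alpha)$.

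\textbf{Main obstacle.} The delicate part is case (ii) — the transition annulus just outside $B(\Omega,\alpha;\de)$ mapping into $B(\Omega,\alpha;\de)$. Milnor's choice of $M$ is tailored precisely so that $\|f'\|_{\psi_\alpha} \geq 1$ there; our modified, larger choice $M = \max\{r(\alpha)\rho_H(z)/|f'(z)| : z \in f^{-1}(\Omega)\setminus\Omega\} + 1$ is designed to upgrade this to $\geq r(\alpha)$, but a priori this is only verified at the finite set $f^{-1}(\Omega)\setminus\Omega$ itself. To get it on a whole neighborhood one uses continuity of $z \mapsto r(\alpha)\rho_H(z)/|f'(z)|$ and the strict inequality $M > r(\alpha)\rho_H(z)/|f'(z)|$, which persists on a neighborhood; then one shrinks $\alpha$ so that $B(\Omega,\alpha;\de) \setminus$ (preimage issues) and the relevant transition region sit inside that neighborhood. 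One must also check that shrinking $\alpha$ does not destroy the definition of $r(\alpha)$ or $M(\alpha)$ — but since $K(\alpha)$ grows as $\alpha$ shrinks, $r(\alpha)$ can only decrease, and one fixes a single small $\alpha$ at the end. I would organize the write-up to first nail down the pointwise derivative bounds (cases i--iii) on the relevant regions, then deduce the path-metric statement by the inverse-branch argument above.
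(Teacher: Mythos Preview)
Your proposal is correct and follows essentially the same approach as the paper: the paper's proof consists precisely of your cases (i) and (ii), establishing the pointwise derivative bound $\psi_\alpha(f(z))|f'(z)| \geq r(\alpha)\,\psi_\alpha(z)$ for $z \in K(\alpha)$ via the definition of $r(\alpha)$ in case (i) and the tailored choice of $M$ plus continuity near $f^{-1}(\Omega)\setminus\Omega$ in case (ii). Your cases (iii)--(iv) and the explicit inverse-branch path-metric step are extra detail the paper leaves implicit (it simply records the pointwise inequality and stops), so your write-up is in fact more complete, but the core argument is the same.
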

\begin{proof} We write $K=K(\alpha)$ and $r=r(\alpha)$. Therefore, on $K\cap f^{-1}K$, we have 
\begin{align*}
\psi_\alpha(f(z))|f'(z)| =\rho_H(f(z))|f'(z)| \geq r \rho_H(z)=r \psi_\alpha(z).
\end{align*}
Suppose $z\in K\cap f^{-1}B(\Omega, \alpha; \de)$. Given $\alpha'$, there exists an $\alpha$ such that for all $z\in f^{-1}B(\Omega,\alpha; \de)$, there exists $\hat{z}\in f^{-1}\Omega\setminus \Omega$ with $\de(z,\hat{z})<\alpha'$. By the definition of $M$, 
\begin{align*}
    M|f'(\hat{z})|\geq r\rho_H(\hat{z}).
\end{align*}
For $\alpha$ sufficiently small, it also holds that
\begin{align*}
    M|f'(z)|\geq r\rho_H(z).
\end{align*}
and
\[
\psi_\alpha(f(z))|f'(z)| =M|f'(z)| \geq r \rho(z) = r \psi_\alpha(z).  \qedhere
\]
\end{proof}
Note that for $p \in \Omega$, $B(p, M^{-1} \alpha; \psi_\alpha(z)|dz|) = B(p, \alpha; \de)$. For all sufficiently small $\alpha>0$, we can assume without loss of generality that $(J(f), f)$ is expansive at scale $4 \alpha$ in the metric  $d_\alpha = \psi_\alpha(z)|dz|$.

To see this, we first take a reference expansivity constant $\epsilon_0$ for $(J(f), f)$ w.r.t the Euclidean metric $|dz|$. We assume that the scale $\alpha$ is chosen small enough compared to this scale. Since the metric is only changed in an $\alpha$-neighborhood, this does not affect the expansivity constant.  More precisely, for a metric $\rho |dz|$, we have that 
$\de(x, y) \leq (\inf \rho)^{-1} d_\rho (x,y)$, so if $d_\rho(x, y) \leq \epsilon$, then $\de (x, y) \leq (\inf \rho)^{-1} \epsilon$ and so if $(J(f), f)$ is expansive w.r.t $\de$ at scale $\epsilon$, then $(J(f), f)$ is expansive w.r.t $d_\rho$ at scale $(\inf \rho) \epsilon$. 
We choose a reference scale $\epsilon_0$ so that $\epsilon_0$ is an expansivity constant for $(J(f), f)$ w.r.t $\de$. Let  $\kappa = \min \{1, \inf \rho_H \}$. Since $\rho_H >0$ on $J(f)\setminus \Omega$ and $M\geq 1$, we have
\[
\inf \psi_\alpha \geq \kappa.
\]
We know that $\inf \psi_\alpha \epsilon_0$ is an expansivity constant  for $\psi_\alpha(z)|dz|$, and thus it follows that $\kappa \epsilon_0$ is an expansivity constant for any metric   $\psi_\alpha(z)|dz|$. Fix an $\alpha \in (0, \kappa\epsilon_0/4)$ and also small enough that Lemma \ref{uniform} applies. In particular, we know that $4 \alpha$ is an expansivity constant in the metric $d_\alpha$.  

For our fixed $\alpha>0$, we take the corresponding Milnor metric. We write $\psi= \psi_\alpha$ and $d=d_\alpha$. From now on, all distances are taken with respect to this Milnor distance unless we specify otherwise. Since we are interested in H\"older continuous potential functions, we check that the Milnor metric gives the same H\"older class as any smooth metric $g(z)|dz|$ on $J(f)$. 
\begin{lemma}
    The metric $\psi(z)|dz|$ is Lipschitz equivalent to any smooth metric on $J(f)$.
\end{lemma}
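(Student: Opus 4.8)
The plan is to deduce Lipschitz equivalence of the two \emph{path} metrics from a pointwise two-sided comparison of the corresponding conformal densities on a fixed neighborhood of $J(f)$. Write $g(z)\,|dz|$ for the given smooth metric, regarded as the restriction to $J(f)$ of a conformal metric that is smooth and strictly positive on an open neighborhood of $J(f)$ in $\RS$ (for instance the spherical metric, or the Euclidean metric when $J(f)\subset\bbC$), with associated path metric $d_g$. It suffices to find a constant $C\geq 1$ and an open set $W\supset J(f)$ such that $C^{-1}g(z)\leq\psi(z)\leq Cg(z)$ for all $z\in W$. Indeed, any rectifiable arc $\gamma$ in $W$ then satisfies $\ell_\psi(\gamma)=\int_\gamma\psi\,|dz|\leq C\int_\gamma g\,|dz|=C\,\ell_g(\gamma)$ and, symmetrically, $\ell_g(\gamma)\leq C\,\ell_\psi(\gamma)$, so taking the infimum over arcs joining a given pair of points of $J(f)$ yields $C^{-1}d_g\leq d\leq Cd_g$ on $J(f)$. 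A small caveat: for $x,y\in J(f)$ close together, any nearly length-minimizing arc in either metric stays in a small neighborhood of $J(f)$, hence in $W$, because both densities are bounded below by a positive constant; thus at small scales the ambient path distances are computed by arcs in $W$, giving local Lipschitz equivalence, and since $J(f)$ is compact (so of finite diameter in each metric) the comparison extends to all of $J(f)\times J(f)$ — which is all that is needed for the two metrics to define the same H\"older class.

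Everything therefore reduces to bounding $g$ and $\psi=\psi_\alpha$ above and below by positive constants on a suitable $W$. For $g$ this is immediate, since $g$ is continuous and strictly positive on a neighborhood of the compact set $J(f)$. For $\psi$, recall that on $B(\Omega,\alpha;\de)$ it equals the constant $M=M(\alpha)\geq 1$, while on $K(\alpha)=J(f)\setminus B(\Omega,\alpha;\de)$ it equals $\rho_H$; as is used in the construction of the Milnor metric and in the proof of Lemma~\ref{uniform}, where $\rho_H$ appears on $K(\alpha)$, the density $\rho_H$ is defined, continuous, and strictly positive there, i.e.\ $K(\alpha)\cap P(f)=\emptyset$. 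Consequently the closed set $P(f)\setminus B(\Omega,\alpha;\de)$ is disjoint from $J(f)$ (any point of it lying in $J(f)$ would lie in $K(\alpha)\cap P(f)$), so we may take $W$ small enough that $\overline{W}\setminus B(\Omega,\alpha;\de)$ is a compact subset of $\RS\setminus P(f)$, on which $\rho_H$ is bounded above and below by positive constants. Combining this with the constant value $M<\infty$ on $B(\Omega,\alpha;\de)$ and the already-recorded lower bound $\inf\psi_\alpha\geq\kappa>0$ gives $0<\inf_W\psi\leq\sup_W\psi<\infty$. Together with the bounds on $g$ this furnishes the constant $C$, completing the argument.

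The argument is essentially routine once the comparison of densities is in place, so the only real point to watch is the domain of $\psi$: it is infinite on $P(f)\setminus B(\Omega,\alpha;\de)$ and discontinuous across $\partial B(\Omega,\alpha;\de)$, so $\psi\,|dz|$ is not a metric on all of $\RS$. The mild obstacle is thus to pin down a neighborhood $W$ of $J(f)$ on which $\psi$ is genuinely bounded above, which is possible precisely because $K(\alpha)$ is disjoint from $P(f)$ — a feature of the parabolic setting already built into the construction of the Milnor metric.
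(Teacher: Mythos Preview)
Your proposal is correct and follows essentially the same approach as the paper: reduce the Lipschitz equivalence of the path metrics to a two-sided bound on the density $\psi$, then obtain that bound piecewise ($\psi\equiv M$ on $B(\Omega,\alpha;\de)$, and $\psi=\rho_H$ bounded on the compact set $K(\alpha)$ because $K(\alpha)\cap P(f)=\emptyset$). Your treatment is more careful than the paper's in two respects --- you work on a neighborhood $W$ of $J(f)$ and explicitly pass from density bounds to path-length bounds, and you correctly isolate the reason $\rho_H$ is finite on $K(\alpha)$ (namely $P(f)\cap J(f)\subset\Omega$ in the parabolic case) rather than the paper's slightly loose ``$\rho(z)$ is finite away from $\Omega$'' --- but the underlying argument is the same.
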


\begin{proof}
It suffices to show that $\psi(z)|dz|$ is Lipschitz equivalent to the Euclidean metric $|dz|$. Since $K=J\setminus B(\Omega, \alpha; \de)$ is compact, $\rho(z)\neq 0$, and $\rho(z)$ is finite away from $\Omega$, then there exists constants $c,C>0$ such that for all $z\in K$
$$c|dz| \leq \psi(z)|dz| \leq C|dz|. $$
On $B(\Omega, \alpha; \de)$, $\psi(z) \equiv M$. Thus, for any $z\in J(f)$, 
\[
\min\{c, M \}|dz| \leq \psi(z)|dz| \leq \max\{C, M \}|dz|. \qedhere
\]
\end{proof}
A consequence of this equivalence is that Lemma \ref{lem:inversebranches} holds true when the balls in the statement are taken in our Milnor distance.
\section{Proof of main result} \label{mainresults}
We now define some natural orbit decompositions of our space. Our decompositions are examples of one-sided $\lambda$-decompositions as described in \cite{Call22}. The set $\cB(\eta)$ is our set $\cS$ of `suffixes'. 
\begin{defn} \label{decomposition}
    Let $\lambda:X\rightarrow [0,\infty)$ be a bounded, lower semicontinuous function. For all $\eta\in [0,1]$, define
    \begin{align}
        \cB(\eta) &\coloneqq \left\{(x,n) |\ \frac{1}{n}\sum_{i=0}^{n-1} \lambda (f^ix)< \eta \right\} \\
        \cG(\eta) &\coloneqq\left\{(x,n) |\ \frac{1}{k}\sum_{i=n-k}^{n-1} \lambda (f^ix)\geq \eta \ \text{for}\  1\leq k\leq n \right\} \label{good}
    \end{align}
We decompose an orbit segment $(x, n)$ by taking the largest $k\leq n$ so that
\[
(f^{n-k}x,k)\in \cB(\eta), \hspace{10pt} (x,n-k)\in \cG(\eta).
\] 
We say that the decomposition $(\cG(\eta), \cB(\eta))$ obtained this way is the (one-sided) $\lambda$-decomposition of $X \times \mathbb N$.
\end{defn} 

Our choice of $\lambda$ will be the characteristic function of the complement of a neighborhood of $\Omega$, which we will choose precisely shortly. 

\subsection{Specification}
We will begin by showing a sufficient condition for $(X,T)$ to have the specification property and then show it holds for parabolic rational maps\footnote{The specification property can also be obtained as a corollary of the existence of a Markov partition for parabolic rational maps \cite{dR82, Du91Absolute} (although the projection map is not H\"older which limits the utility of the Markov partition in the parabolic case). We prefer to give a self-contained proof which shows the fundamental idea behind this phenomenon and is suitable for generalization.}.

    The proof of the specification property away from $\Omega$ is rather general. Although, the balls used in this proof are computed in our metric $d$ to match the rest of our presentation, the argument does not require the use of this metric and would hold equally for any choice of Riemannian metric on $\RS$. We write $B(z, \epsilon)$ for $B(z, \epsilon;d)$ unless we specify otherwise.

\begin{lemma}
    Let $X$ be compact metric space and $T:X\rightarrow X$ be a continuous map. Suppose that for all $\varepsilon>0$ there exists $N_{\varepsilon}\in\bbN$ such that  for all $(x,n)\in \cD\subset X\times \bbN$, we have
    \begin{align}
        T^{N_{\varepsilon}}(T^{n-1}B_{n}(x,\varepsilon))=X. \label{BowenBallCover}
    \end{align}
 Then $(X,T)$ has specification on  $\cD$ at scale $\varepsilon$. \label{suffCon}
\end{lemma}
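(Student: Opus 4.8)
The plan is to build the specification point $y$ by a short backward induction, adjoining one orbit segment at a time beginning from the last one, with transition time $\tau:=N_\varepsilon-1$. The only use we make of hypothesis \eqref{BowenBallCover} is the following reformulation: for every $(x,n)\in\cD$ and every target $w\in X$ there is $z\in B_n(x,\varepsilon)$ with $T^{\,n-1+N_\varepsilon}(z)=w$. Indeed $w\in X=T^{N_\varepsilon}\big(T^{n-1}B_n(x,\varepsilon)\big)$, so we may pick $p\in T^{n-1}B_n(x,\varepsilon)$ with $T^{N_\varepsilon}(p)=w$ and write $p=T^{n-1}z$ with $z\in B_n(x,\varepsilon)$; then $T^{\,n-1+N_\varepsilon}z=T^{N_\varepsilon}p=w$. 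In words: from the terminal point of \emph{any} $\varepsilon$-shadowing orbit of a segment in $\cD$ one can reach any point of $X$ after exactly $N_\varepsilon$ further iterates, and this is precisely what allows us to splice a new segment onto the front of an already-constructed orbit.

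Now fix $(x_1,n_1),\dots,(x_k,n_k)\in\cD$ and set $t_i:=\sum_{j=1}^{i-1}(n_j-1+N_\varepsilon)$ for $1\le i\le k$, so that $t_{i+1}-t_i=n_i-1+N_\varepsilon$. We construct points $y_k,y_{k-1},\dots,y_1$ with the property
\begin{align*}
T^{\,t_j-t_i}(y_i)\in B_{n_j}(x_j,\varepsilon)\qquad\text{for all } i\le j\le k.
\end{align*}
For the base case take $y_k:=x_k$, which lies in $B_{n_k}(x_k,\varepsilon)$. For the inductive step, given $y_{i+1}$ with this property, apply the reformulation above to $(x_i,n_i)\in\cD$ with target $w=y_{i+1}$ to obtain $y_i\in B_{n_i}(x_i,\varepsilon)$ with $T^{\,n_i-1+N_\varepsilon}(y_i)=T^{\,t_{i+1}-t_i}(y_i)=y_{i+1}$; composing with the property for $y_{i+1}$ gives $T^{\,t_j-t_i}(y_i)=T^{\,t_j-t_{i+1}}(y_{i+1})\in B_{n_j}(x_j,\varepsilon)$ for $i+1\le j\le k$, while the case $j=i$ is just $y_i\in B_{n_i}(x_i,\varepsilon)$. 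Setting $y:=y_1$ we get $T^{t_i}(y)\in B_{n_i}(x_i,\varepsilon)$ for every $i$.

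Finally, $t_i=\sum_{j=1}^{i-1}(n_j-1+N_\varepsilon)=\sum_{j=1}^{i-1}(n_j+\tau)$ with $\tau:=N_\varepsilon-1$, so this is exactly the specification property for $\cD$ at scale $\varepsilon$, with transition time $\tau$. (We may assume $N_\varepsilon\ge1$, so that $\tau\in\bbN$: specification at scale $\varepsilon$ follows from specification at any smaller scale, and unless $X$ is a single point---in which case there is nothing to prove---one has $N_{\varepsilon'}\ge1$ for all sufficiently small $\varepsilon'$.) I do not anticipate a genuine obstacle: the argument rests entirely on the one-line reformulation of \eqref{BowenBallCover}, and the only point needing care is the bookkeeping of the times $t_i$ together with the harmless off-by-one coming from the $T^{n-1}$---rather than $T^{n}$---in the hypothesis.
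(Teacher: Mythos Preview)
Your proof is correct and follows essentially the same backward-recursion strategy as the paper: start with $y_k=x_k$ and, using \eqref{BowenBallCover}, successively find $y_{i}\in B_{n_i}(x_i,\varepsilon)$ with $T^{\,n_i-1+N_\varepsilon}y_i=y_{i+1}$, then set $y=y_1$. Your write-up is in fact more careful than the paper's about the bookkeeping of the times $t_i$ and the identification $\tau=N_\varepsilon-1$.
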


\begin{proof}
    Consider $(x_1,n_1),\dots, (x_k,n_k)$.    Let $y_{k} = x_{k}$. By (\ref{BowenBallCover}), we can find a point $y_{k-1}\in B_{n_{k-1}}(x_{k-1},\varepsilon)$ with 
 \begin{align*}
    T^{N_{\varepsilon}+n_{k-1}-1}y_{k-1} = y_{k} 
\end{align*}
  We continue recursively, extending the orbit of $y_{k}$ backwards to get $y_{k},y_{k-1}, \dots y_{i}$. Let $y_{i-1}$ satisfy $y_{i-1}\in B_{n_{i-1}}(x_{i-1},\varepsilon)$ and $T^{N_{\varepsilon}+n_{i-1}-1}y_{i-1}=y_{i}$. Let $y\coloneqq y_1$, then $$T^{\sum_{i=1}^{j}(n_{i}-1)+jN_{\varepsilon}}y = y_{j+1}$$ for all $j\in \{1,\dots,k-1 \}$. By construction, $y_{j+1}\in B_{j+1}(x_{j+1},\varepsilon)$, so we have specification.
\end{proof}
For $\alpha>0$, we define 
\[
E(\alpha)\coloneqq J(f)\setminus \overline{B(\Omega,2\alpha)}.
\]
We show that for all $\alpha>0$, parabolic rational maps satisfy \eqref{BowenBallCover} on the set of orbit segments $\cD(\alpha)\subset J(f)\times \mathbb{N}$ given by 
\begin{align*}
    \cD(\alpha) \coloneqq \{(x,n)\in J(f)\times \mathbb{N} \mid f^{n-1}x\in E(\alpha) \}.
\end{align*}

\begin{lemma}
For a parabolic rational  map, for all $\varepsilon>0$, there exists a $\gamma(\varepsilon)$ such that for all $(x,n)\in \cD(\alpha)$, we have 
\begin{align*}
      B(f^{n-1}x,\gamma(\varepsilon))\subset f^{n-1}B_{n}(x,\varepsilon)
\end{align*}\label{defSize}
\end{lemma}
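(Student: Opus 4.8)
The plan is to realize the required preimage by an inverse branch, and then control the Bowen distance through equicontinuity of inverse branches. Given $(x,n)\in\cD(\alpha)$, write $z=f^{n-1}x\in E(\alpha)$, and for $0\le j\le n-1$ let $f_z^{-j}$ be the single-valued analytic branch of $(f^j)^{-1}$ near $z$ sending $z$ to $f^{n-1-j}x$. For $w\in J(f)$ close to $z$ I would set $y=f_z^{-(n-1)}(w)$. Then $f^{n-1}y=w$; since inverse branches compose, $f^{n-1-j}y=f_z^{-j}(w)$ for every $j$; and since $w\in J(f)$ and $J(f)$ is completely invariant, every $f^ky$ lies in $J(f)$. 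As $k$ and $j=n-1-k$ both range over $\{0,\dots,n-1\}$, we have $d_n(x,y)=\max_{0\le j\le n-1}d\big(f_z^{-j}(z),f_z^{-j}(w)\big)$, so it suffices to produce $\gamma(\varepsilon)>0$, depending only on $\varepsilon$ and $\alpha$, with $d(w,z)<\gamma(\varepsilon)\Rightarrow d\big(f_z^{-j}(z),f_z^{-j}(w)\big)<\varepsilon$ for all $j\le n-1$ and all $(x,n)\in\cD(\alpha)$; then $y\in B_n(x,\varepsilon)$ with $f^{n-1}y=w$, giving $w\in f^{n-1}B_n(x,\varepsilon)$.

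To obtain such a uniform $\gamma(\varepsilon)$ I would combine Lemma~\ref{lem:inversebranches} and Lemma~\ref{normalInverseBranches}. Since $\overline{E(\alpha)}\subseteq J(f)\setminus B(\Omega,\alpha)$ is compact, Lemma~\ref{lem:inversebranches} yields $\delta=\delta(\alpha)>0$ such that for every $z\in\overline{E(\alpha)}$ the ball $B(z,2\delta)$ meets the forward trajectory of no critical point, and hence carries all single-valued analytic branches of every $(f^m)^{-1}$; in particular each $f_z^{-j}$ above is defined on $B(z,2\delta)$. Cover $\overline{E(\alpha)}$ by finitely many balls $B(z_1,\delta/2),\dots,B(z_N,\delta/2)$, and for each $i$ let $\mathcal H_i$ be the family of all single-valued analytic branches of all $(f^m)^{-1}$, $m\ge 1$, on $B(z_i,2\delta)$. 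By Lemma~\ref{normalInverseBranches}, $\mathcal H_i$ is normal on $B(z_i,2\delta)$, hence equicontinuous (in the spherical metric) on the compact set $\overline{B(z_i,\delta)}$; as the spherical metric is bi-Lipschitz to $d$ on $J(f)$, this produces, for each $\varepsilon>0$, some $\gamma_i>0$ such that $d(h(u),h(v))<\varepsilon$ whenever $h\in\mathcal H_i$ and $u,v\in\overline{B(z_i,\delta)}\cap J(f)$ satisfy $d(u,v)<\gamma_i$. I would then set $\gamma(\varepsilon)=\min\{\varepsilon,\delta/2,\gamma_1,\dots,\gamma_N\}$.

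The remaining step is routine bookkeeping. Given $(x,n)\in\cD(\alpha)$ and $w\in J(f)$ with $d(w,z)<\gamma(\varepsilon)$, where $z=f^{n-1}x$, pick $i$ with $z\in B(z_i,\delta/2)$; then $B(z,\gamma(\varepsilon))\subseteq B(z_i,\delta)$, and for each $1\le j\le n-1$ the branch of $(f^j)^{-1}$ on $B(z_i,2\delta)$ taking $z$ to $f^{n-1-j}x$ coincides with $f_z^{-j}$ on $B(z,\gamma(\varepsilon))$ by the identity theorem; call it $h\in\mathcal H_i$. The equicontinuity bound with $u=z$, $v=w$ gives $d\big(f_z^{-j}(z),f_z^{-j}(w)\big)<\varepsilon$ for $1\le j\le n-1$, while for $j=0$ this difference equals $d(z,w)<\gamma(\varepsilon)\le\varepsilon$. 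Hence $d_n(x,y)<\varepsilon$ with $f^{n-1}y=w$, and the proof concludes as in the first paragraph.

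The step needing genuine care — the reason the statement is not immediate — is the \emph{uniformity in $n$}: the branches $f_z^{-j}$ live on $z$-dependent domains and one must bound their oscillation at $z$ simultaneously over all $z\in E(\alpha)$ and all orbit lengths $j\le n-1$. This is exactly what is secured by (i) the $\alpha$-uniform radius $2\delta$ coming from Lemma~\ref{lem:inversebranches}, (ii) the finite subcover of $\overline{E(\alpha)}$, and (iii) the fact that each normal family $\mathcal H_i$ already contains the branches of \emph{all} iterates, so a single equicontinuity modulus handles every orbit segment in $\cD(\alpha)$ at once. The only other nontrivial point is passing between the spherical metric, in which normality yields equicontinuity, and the Milnor metric $d$, which is handled by their bi-Lipschitz equivalence on $J(f)$.
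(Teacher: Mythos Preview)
Your proof is correct and follows essentially the same approach as the paper: both use Lemma~\ref{lem:inversebranches} to obtain a uniform radius $2\delta$ on which all inverse branches are defined over $J\setminus B(\Omega,2\alpha)$, extract a finite subcover, invoke Lemma~\ref{normalInverseBranches} to obtain equicontinuity of the family of all inverse branches on each piece, and take the minimum of the resulting moduli. The only differences are cosmetic---you cover by $\delta/2$-balls and use equicontinuity on the compact $\overline{B(z_i,\delta)}$, whereas the paper covers by $2\delta$-balls and uses a Lebesgue number; you also make explicit the bi-Lipschitz comparison between the spherical and Milnor metrics and the $j=0$ case, both of which the paper leaves implicit.
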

\begin{proof}
Let $x\in J\setminus B(\Omega,2\alpha)$. By Lemma \ref{inverseBranches}, there exists a $\delta (\alpha)$ such that the inverse branches of $f^{n}:\RS\rightarrow \RS$ are well defined on $B(x,2\delta)$ for every $n\in \bbN$. We can cover $J\setminus B(\Omega,2\alpha)$ by 
\begin{align*}
\bigcup_{x\in J\setminus B(\Omega,2\alpha)} B(x,2\delta).
\end{align*}

Let $\Gamma = \{ D_1,\dots,D_{m}\}$ be the finite subcover of this cover. For each $D_{i}\in \Gamma$, by Lemma \ref{inverseBranches} and  \ref{normalInverseBranches}, the family of all inverse branches on $D_{i}$ is normal, and thus also equicontinuous. Therefore, on each $D_{i}\in \Gamma$, given $\varepsilon> 0$, there exists $\gamma_i$ such that $d(f^{-j}_{v} x,f^{-j}_{v} y)< \varepsilon$ whenever $d(x,y)<\gamma_i$ for any inverse branch $f^{-j}_v$, $1\leq \nu\leq d^{j}$, of $f^{j}$.  Let $\delta'$ be the Lebesgue number for the cover $\Gamma$ and 
\begin{align*}
\gamma(\varepsilon) = \min \{\delta',\gamma_1,\dots,\gamma_{m}\}.
\end{align*}

Suppose $(x,n)\in \cD(\alpha)$ and $z\in  B(f^{n-1}x,\gamma(\varepsilon))$. By the definition of $\cD(\alpha)$, $f^{n-1}x\in J\setminus B(\Omega,2\alpha)$. Since $d(z,f^{n-1}x)<\delta'$, they are both contained in the same domain $D_k\in \Gamma$ for some $1\leq k\leq m$. Additionally, $d(z,f^{n-1}x)<\gamma_k$ for the $\gamma_{k}$ coming from equicontinuity on that domain $D_{k}$. For $1\leq j\leq n-1 $, choose the inverse branch $f^{-j}_{v}$ on $D_{k}$ given $f^{-j}_{v}f^{n-1}x = f^{n-1-j}x$. By equicontinuity, we have $d(f^{-j}_{v}z,f^{-j}_{v}f^{n-1}x)\leq \varepsilon$. Let $y = f^{-(n-1)}z$, then $y\in B_{n}(x,\varepsilon)$ and $z\in f^{n-1}B_{n}(x,\varepsilon)$.
\end{proof}

\begin{lemma}
    If a map $T:X\rightarrow X$ is locally eventually onto, then for all $\varepsilon >0 $, there exists an $N_{\varepsilon}>0$ so that for all $y\in X$, 
    $$f^{N_\varepsilon}(B(y,\varepsilon))=X$$
\end{lemma}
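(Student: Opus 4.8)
The plan is a routine compactness argument: we upgrade the pointwise (non-uniform) locally-eventually-onto property to one that is uniform over all balls of a fixed radius, by covering $X$ with finitely many balls of half that radius and taking the maximum of the finitely many associated times.

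First I would record that a locally eventually onto map $T$ is surjective. Since $X$ is compact and $T$ continuous, $T(X)$ is closed; if it were a proper subset, its complement $U$ would be a nonempty open set, and since $T^{k}(X)\subseteq T(X)$ for all $k\geq 1$ (induct on $k$ using $T(X)\subseteq X$), we would get $T^{n}(U)\subseteq T^{n}(X)\subseteq T(X)\subsetneq X$ for every $n\geq 1$, contradicting the defining property applied to $U$. Hence $T(X)=X$. A consequence of surjectivity is a monotonicity-in-the-iterate statement: if $T^{N}(V)=X$ for some set $V$ and some $N$, then $T^{N+k}(V)=T^{k}(X)=X$ for all $k\geq 0$.

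Now fix $\varepsilon>0$ and cover $X$ by the open balls $\{B(y,\varepsilon/2):y\in X\}$. By compactness there are $y_1,\dots,y_m$ with $X=\bigcup_{i=1}^{m}B(y_i,\varepsilon/2)$. Each $B(y_i,\varepsilon/2)$ is nonempty and open, so by the locally eventually onto property there is $N_i\in\mathbb{N}$ with $T^{N_i}(B(y_i,\varepsilon/2))=X$. Set $N_\varepsilon=\max\{N_1,\dots,N_m\}$; by the monotonicity remark, $T^{N_\varepsilon}(B(y_i,\varepsilon/2))=X$ for every $i$. Finally, given an arbitrary $y\in X$, pick $i$ with $y\in B(y_i,\varepsilon/2)$; the triangle inequality gives $B(y_i,\varepsilon/2)\subseteq B(y,\varepsilon)$, whence $X=T^{N_\varepsilon}(B(y_i,\varepsilon/2))\subseteq T^{N_\varepsilon}(B(y,\varepsilon))\subseteq X$, so $T^{N_\varepsilon}(B(y,\varepsilon))=X$.

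There is no substantive obstacle here: the only subtlety is ensuring that a \emph{single} constant $N_\varepsilon$ works simultaneously for every $\varepsilon$-ball, and this is exactly what the finite subcover together with surjectivity (to allow freely increasing the iterate) delivers. Combined with Lemmas~\ref{defSize} and~\ref{suffCon} and Lemma~\ref{LEO}, this yields hypothesis~\eqref{BowenBallCover} on $\cD(\alpha)$ and hence the specification property away from $\Omega$.
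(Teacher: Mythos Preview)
Your argument is correct and follows essentially the same route as the paper: take a finite subcover by $\varepsilon/2$-balls, choose for each a time at which its image is all of $X$, and set $N_\varepsilon$ to be the maximum. The one point where you are more careful than the paper is in explicitly proving surjectivity so as to justify the monotonicity step $T^{N_i}(B(y_i,\varepsilon/2))=X\Rightarrow T^{N_\varepsilon}(B(y_i,\varepsilon/2))=X$; the paper's proof uses this implicitly when it writes $f^{N_\varepsilon}B(x,\varepsilon/2)=X$ without comment.
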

\begin{proof}
    Let $\Gamma$ be a finite subcover of $\{B(x,\frac{\varepsilon}{2}) : \ x\in X \}$. Let $N(x,\varepsilon)$ satisfy $f^{N(x,\varepsilon)}B(x,\varepsilon/2)$. Such an $N(x,\varepsilon)$ always exists as $T$ is locally eventually onto. Let $N_{\varepsilon} = \max\{N(x,\varepsilon) : B(x,\frac{\varepsilon}{2}) \in \Gamma\}$. If $y\in X$, then $y\in B(x,\frac{\varepsilon}{2})$ for some $B(x,\frac{\varepsilon}{2})\in \Gamma$, as $\Gamma$ covers $X$. Further, by the triangle inequality, $B(x,\frac{\varepsilon}{2})\subset B(y,\varepsilon)$. It follows that $f^{N_{\varepsilon}}(B(y,\varepsilon))\supseteq f^{N_{\varepsilon}}B(x,\frac{\varepsilon}{2}) = X$
\end{proof}
By lemmas \ref{defSize} and \ref{LEO}, we arrive at the following corollary. 
\begin{cor}
    For a parabolic rational map, for all $\varepsilon> 0 $, there exists a $N_{\varepsilon}$ such that for all $(x,n)\in \cD(\alpha)$, we have
    $$f^{N_{\varepsilon}}\left(f^{n-1}B_{n}(x,\varepsilon)\right)=J (f).$$ \label{cor: propforParabolic}
\end{cor}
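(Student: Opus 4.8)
The plan is to chain together the two results immediately preceding the corollary. First I would fix $\varepsilon>0$ and invoke Lemma \ref{defSize} to produce $\gamma=\gamma(\varepsilon)>0$ with the property that $B(f^{n-1}x,\gamma)\subseteq f^{n-1}B_{n}(x,\varepsilon)$ for \emph{every} orbit segment $(x,n)\in\cD(\alpha)$. The point of restricting to $\cD(\alpha)$ is precisely that the endpoint $f^{n-1}x$ lies in $E(\alpha)=J(f)\setminus\overline{B(\Omega,2\alpha)}$, where equicontinuity of the inverse branches (Lemmas \ref{normalInverseBranches} and \ref{inverseBranches}) yields a single $\gamma$ independent of $n$ and of the particular orbit segment.

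Next, since $f:J(f)\rightarrow J(f)$ is locally eventually onto by Lemma \ref{LEO}, I would apply the lemma immediately preceding this corollary (locally eventually onto implies a uniform covering time) with the scale $\gamma(\varepsilon)$ in place of $\varepsilon$. This produces $N:=N_{\gamma(\varepsilon)}\in\bbN$ such that $f^{N}\bigl(B(y,\gamma(\varepsilon))\bigr)=J(f)$ for every $y\in J(f)$. Setting $N_{\varepsilon}:=N$, for any $(x,n)\in\cD(\alpha)$ I would take $y=f^{n-1}x\in J(f)$ and combine the two inclusions:
\[
f^{N_{\varepsilon}}\bigl(f^{n-1}B_{n}(x,\varepsilon)\bigr)\supseteq f^{N_{\varepsilon}}\bigl(B(f^{n-1}x,\gamma(\varepsilon))\bigr)=J(f).
\]
The reverse inclusion is immediate: $f^{n-1}B_{n}(x,\varepsilon)\subseteq J(f)$ and $J(f)$ is completely invariant, so $f^{N_{\varepsilon}}\bigl(f^{n-1}B_{n}(x,\varepsilon)\bigr)\subseteq J(f)$. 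Equality follows.

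I do not expect any genuine obstacle here; this is a bookkeeping corollary whose substance is already contained in Lemmas \ref{defSize} and \ref{LEO}. The only point requiring care is the order of quantifiers: one must select $\gamma$ first, uniformly over $\cD(\alpha)$, and only then select $N_{\varepsilon}$ depending on $\gamma$. Because both choices are uniform over the relevant sets, the single constant $N_{\varepsilon}$ works simultaneously for all $(x,n)\in\cD(\alpha)$, which is exactly the form of statement \eqref{BowenBallCover} needed to feed into the specification criterion of Lemma \ref{suffCon}.
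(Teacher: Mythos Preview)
Your proposal is correct and follows exactly the approach the paper intends: the paper itself does not spell out a proof but simply writes ``By lemmas \ref{defSize} and \ref{LEO}, we arrive at the following corollary,'' and your argument is precisely the chaining of Lemma \ref{defSize} with the uniform covering-time lemma (the unlabeled lemma immediately preceding the corollary, which is itself a consequence of Lemma \ref{LEO}). Your attention to the order of quantifiers and to the reverse inclusion via complete invariance of $J(f)$ are the right details to fill in.
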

 Thus, by Lemma \ref{suffCon} and Corollary \ref{cor: propforParabolic}, we have shown the parabolic rational maps satisfy the specification property on $\cD(\alpha)$ for any $\alpha>0$.
  \begin{lemma} 
 Let $f:J(f)\rightarrow J(f)$ be a parabolic rational map, then $(f, J(f))$ has specification on $\cD(\alpha)$. \label{parabolicSpecification}
 \end{lemma}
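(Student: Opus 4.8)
The plan is to combine the three results just developed into a single clean statement. From Corollary \ref{cor: propforParabolic}, for every $\varepsilon>0$ there is $N_\varepsilon\in\bbN$ such that for every $(x,n)\in\cD(\alpha)$ we have $f^{N_\varepsilon}\left(f^{n-1}B_n(x,\varepsilon)\right)=J(f)$. This is precisely condition \eqref{BowenBallCover} of Lemma \ref{suffCon}, applied with $X=J(f)$, $T=f$, and $\cD=\cD(\alpha)$. Hence Lemma \ref{suffCon} immediately yields that $(J(f),f)$ has specification on $\cD(\alpha)$ at scale $\varepsilon$. Since $\varepsilon>0$ was arbitrary, we conclude that $(J(f),f)$ has specification on $\cD(\alpha)$ at every scale.

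So essentially the only thing to do is to assemble the dependencies in the right order and check that the hypotheses line up: Lemma \ref{defSize} produces the radius $\gamma(\varepsilon)$ with $B(f^{n-1}x,\gamma(\varepsilon))\subset f^{n-1}B_n(x,\varepsilon)$ for $(x,n)\in\cD(\alpha)$; Lemma \ref{LEO} (together with the preceding lemma on locally eventually onto maps) produces a uniform time $N_{\gamma(\varepsilon)}$ after which any $\gamma(\varepsilon)$-ball is spread onto all of $J(f)$; composing these two facts is exactly Corollary \ref{cor: propforParabolic}, and feeding that into Lemma \ref{suffCon} finishes the argument. I would write this as a short paragraph explicitly naming each input and the universal quantifier over $\varepsilon$, since that is the only subtlety — the specification constant $\tau=N_\varepsilon$ depends on $\varepsilon$, which is allowed by the definition of specification at scale $\delta$.

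There is no real obstacle here; the work has already been done in the three preceding lemmas and the corollary. The only points requiring a moment's care are (i) confirming that $\cD(\alpha)$ is the correct collection of orbit segments — note $(x,n)\in\cD(\alpha)$ means exactly $f^{n-1}x\in E(\alpha)=J(f)\setminus\overline{B(\Omega,2\alpha)}$, which is the hypothesis needed for Lemma \ref{defSize} via Lemma \ref{inverseBranches} — and (ii) observing that the transition time in the resulting specification is $\tau=N_\varepsilon$, a constant depending only on $\varepsilon$ (and on $\alpha$, which is fixed), not on the orbit segments or their number $k$. Both are routine once the statements of Corollary \ref{cor: propforParabolic} and Lemma \ref{suffCon} are placed side by side.
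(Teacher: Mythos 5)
Your proposal is correct and follows exactly the paper's route: the lemma is stated there as an immediate consequence of Lemma \ref{suffCon} combined with Corollary \ref{cor: propforParabolic}, which in turn assembles Lemma \ref{defSize} and the locally-eventually-onto property. Your additional remarks about the quantifier on $\varepsilon$ and the dependence of $\tau=N_\varepsilon$ only on $\varepsilon$ and $\alpha$ are accurate and consistent with the definition of specification used in the paper.
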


\subsection{Choice of $\lambda$-decomposition and the Bowen Property} \label{bowenProperty}

We now make precise the choice of $\lambda$ in the decomposition Definition \ref{decomposition}. In this section, we use Lemma \ref{distanceincreasing}, so for our presentation we are working with the assumption that $\Omega(f)$ contains only fixed points. At the end of \S 2, we fixed an $\alpha$ satisfying Lemma~\ref{uniform}. This fixed a Milnor metric and distance function. We have $B(\Omega, \alpha; \de) = B(\Omega, M^{-1} \alpha; d) \subset B\left(\Omega, 2\alpha; d\right) = B\left(\Omega, 2\alpha\right )$. Note that we chose $\alpha$ so that $4 \alpha$ is an expansivity constant in the metric $d$. 

Let $\lambda = \lambda^{\alpha}$ be the characteristic function 
\begin{align}
\lambda \coloneqq \chi_{E} \quad \text{where  $E=E(\alpha) \coloneqq J\setminus \overline{B(\Omega,2\alpha)}$}. \label{choiceDecomp}
\end{align} 

Let $(\cG(\eta), \cB(\eta)) = (\cG^\alpha(\eta), \cB^\alpha (\eta))$ be the $\lambda$-decomposition as in Definition \ref{decomposition} for this choice of $\lambda$. The choice of $\alpha$ is fixed as in \S \ref{s.milnor} until the end of  \S \ref{mainresults}. Let $r=r(\alpha)>1$ be as in Lemma \ref{uniform}. 
\begin{lemma}
  Let $\varepsilon>0$ be sufficiently small.  Let $(x,n)\in\cG(\eta)$ and $y\in B_{n}(x,\varepsilon)$, then 
    \begin{align*}
        d(f^{\ell}x,f^{\ell}y)\leq r^{-\eta(n-\ell)} \varepsilon
    \end{align*}
    for every $0\leq \ell\leq n$
\end{lemma}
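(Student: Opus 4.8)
The statement asserts contraction along good orbit segments, read backwards: if $(x,n)\in\cG(\eta)$ then $x$ and any $y$ that $\varepsilon$-shadows it stay close, with the separation at time $\ell$ controlled by the product of expansion factors accumulated over the remaining time $n-\ell$. The plan is to induct \emph{downward} on $\ell$ from $\ell=n$ to $\ell=0$, using Lemma~\ref{uniform} to push the factor $r$ in or out of the distance at each step. The key to making Lemma~\ref{uniform} applicable is precisely the definition of $\cG(\eta)$ in \eqref{good}: for every suffix $1\le k\le n$, the average of $\lambda=\chi_E$ over $(f^{n-k}x,\dots,f^{n-1}x)$ is at least $\eta$; in particular the number of indices $i\in\{n-k,\dots,n-1\}$ with $f^i x\in E(\alpha)=J\setminus\overline{B(\Omega,2\alpha)}$ is at least $\eta k$. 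Since $E(\alpha)\subset K(\alpha)$, whenever $f^i x\in E(\alpha)$ we get a genuine expansion factor $r$ from Lemma~\ref{uniform}, and otherwise we at least get a factor $1$ from the fact that $f$ is distance-increasing on all of $J(f)$ (Milnor, Lemma~B.3). Combining these, over the block of time from $\ell$ to $n$ the distance is multiplied by at least $r^{(\text{number of good steps})}\ge r^{\eta(n-\ell)}$, which is exactly the claimed bound after rearranging.

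More concretely, here is the order of steps. First, fix $\varepsilon>0$ small enough that: (i) $\varepsilon$ is below the threshold $\varepsilon$ of Lemma~\ref{uniform} so that $d(x,y)<\varepsilon$ implies $d(fx,fy)\ge r\,d(x,y)$ whenever $x\in K(\alpha)$; (ii) $\varepsilon$ is below the distance-increasing threshold from Milnor's lemma so that $d(fx,fy)\ge d(x,y)$ whenever $d(x,y)<\varepsilon$; and (iii) $\varepsilon$ is small enough that $d_n(x,y)<\varepsilon$ keeps all the iterates within these thresholds — this is automatic since $y\in B_n(x,\varepsilon)$ gives $d(f^ix,f^iy)<\varepsilon$ for all $0\le i<n$. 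Second, prove the one-step claim: for $0\le\ell<n$, if $d(f^{\ell+1}x,f^{\ell+1}y)\le c$ for some $c<\varepsilon$, then $d(f^{\ell}x,f^{\ell}y)\le c$ always, and $d(f^{\ell}x,f^{\ell}y)\le r^{-1}c$ when $f^{\ell}x\in E(\alpha)\subset K(\alpha)$ — this is just the contrapositive of the expansion estimates, noting $d(f^\ell x,f^\ell y)<\varepsilon$ so the hypotheses of both lemmas hold. Third, run the downward induction starting from $d(f^nx,f^ny)\le\varepsilon$ (trivially, though one should be slightly careful and start the induction at $\ell=n-1$ using $d(f^{n-1}x,f^{n-1}y)<\varepsilon$), accumulating a factor $r^{-1}$ each time $f^i x\in E(\alpha)$. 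Fourth, count: the number of $i\in\{\ell,\dots,n-1\}$ with $f^ix\in E(\alpha)$ is $\sum_{i=n-k}^{n-1}\lambda(f^ix)$ with $k=n-\ell$, which is $\ge\eta k=\eta(n-\ell)$ by the good-segment condition \eqref{good}. Hence $d(f^\ell x,f^\ell y)\le r^{-\eta(n-\ell)}\varepsilon$, as required. (If $\eta(n-\ell)$ is not an integer, one uses that the count is $\ge\lceil\eta(n-\ell)\rceil\ge\eta(n-\ell)$ and $r>1$.)

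The main subtlety — and the only real obstacle — is bookkeeping around the metric thresholds: Lemma~\ref{uniform} only gives the expansion $r$ when the \emph{point} $f^\ell x$ lies in $K(\alpha)$ (not merely when $f^\ell y$ does), and only when $d(f^\ell x,f^\ell y)$ is below the Lemma~\ref{uniform} scale, while at steps where $f^\ell x\notin E(\alpha)$ one must fall back on the weaker distance-increasing property which holds on all of $J(f)$ but with possibly a different (smaller) scale. So one should first shrink $\varepsilon$ to be a common threshold for both properties, and observe that since both estimates only ever \emph{decrease} distances as we step backwards, the inductive hypothesis $d(f^{\ell+1}x,f^{\ell+1}y)\le\varepsilon$ propagates to $d(f^\ell x,f^\ell y)\le\varepsilon$, keeping us in the valid regime throughout. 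A secondary minor point is that $E(\alpha)=J\setminus\overline{B(\Omega,2\alpha)}$ is slightly smaller than $K(\alpha)=J\setminus B(\Omega,\alpha;\de)$; since $B(\Omega,\alpha;\de)=B(\Omega,M^{-1}\alpha;d)\subset\overline{B(\Omega,2\alpha)}$, membership in $E(\alpha)$ indeed implies membership in $K(\alpha)$, so Lemma~\ref{uniform} applies on $E(\alpha)$ as needed. With these observations in place the argument is a short, clean induction.
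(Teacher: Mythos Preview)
Your proposal is correct and follows essentially the same approach as the paper: downward induction from $\ell=n-1$, using the uniform expansion factor $r$ from Lemma~\ref{uniform} at indices with $f^i x\in E(\alpha)\subset K(\alpha)$ and the mere distance-increasing property of the Milnor metric elsewhere, then counting that at least $\eta(n-\ell)$ of the indices in $\{\ell,\dots,n-1\}$ are good by the definition of $\cG(\eta)$. One small remark: Lemma~\ref{uniform} as stated requires \emph{both} points $f^\ell x,f^\ell y\in K(\alpha)$, which you implicitly ensure by noting the buffer $E(\alpha)\subsetneq K(\alpha)$ and shrinking $\varepsilon$---it would be worth saying this explicitly.
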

\begin{proof}
Let $x\in E$, then  $x,y\in J\setminus \overline{ B(\Omega, 2\alpha)}\subset K(\alpha)$. By Lemma \ref{uniform}, if $x,y\in K(\alpha)$ we have
\begin{align}
    d(fx,fy)\geq  r d(x,y) \label{exp}
\end{align}
whenever $x$ and $y$ are close. Therefore, if $f^{j}x\in  E$ and $y\in B_{n}(x,\varepsilon)$, then
\begin{align*}
    d(f(f^{j}x),f(f^{j}y))\geq  r d(f^j x,f^j y).
\end{align*}

Let $0\leq \ell \leq n-1$, then there exists a $1\leq k\leq n$ such that $\ell = n-k$. Since $(x,n)\in\cG(\eta)$, for any fixed $1\leq k\leq n$, 
\begin{align*}
    \sum_{i=n-k}^{n-1} \chi_{E} (f^ix)\geq \eta k.
\end{align*}
We find that for each $\ell$, 
\begin{align}
    |\{j\mid \ell \leq j\leq n-1 \ \text{and} \ f^{j}x\in E \}|\geq \eta (n-\ell)
\end{align}
Thus, there are at least $\eta (n-\ell)$ iterates in the orbit $(f^{\ell}x,n-1-\ell)$ contained in $E$ and thus in $K(\alpha)$. Given $m\in \mathbb{N}$, if the iterate $f^{m}x\not\in E$, since the metric $d$ is distance increasing and $y\in B_{n}(x,\varepsilon)$, we have
\begin{align*}
    d(f(f^{m}x),f(f^{m}y))> d(f^m x,f^m y). 
\end{align*}

Therefore, it follows that 
\[
    d(f^{\ell}x,f^{\ell}y)\leq r^{-\eta(n-\ell)}  d(f^{n-1}x,f^{n-1}y)\leq r^{-\eta(n-\ell)} \varepsilon \qedhere
\] \
\end{proof}

\begin{lemma} \label{HoldertoBowen}
    Any H\"older continuous $\varphi$ has the Bowen property on $\cG$ at scale $\varepsilon$. \label{lem: holderBowen}
\end{lemma}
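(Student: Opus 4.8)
The plan is to derive the Bowen property directly from the exponential-contraction estimate in the preceding lemma, using only the definition of H\"older continuity and the summation of a geometric series.

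First I would fix the H\"older data. Since the Milnor metric $d$ is Lipschitz equivalent to any smooth metric on $J(f)$, a potential $\varphi$ that is H\"older with respect to a smooth metric is also H\"older with respect to $d$; so choose constants $C>0$ and $\beta\in(0,1]$ with $|\varphi(a)-\varphi(b)|\le C\,d(a,b)^{\beta}$ for all $a,b\in J(f)$. Fix $\eta>0$ and take $\varepsilon>0$ small enough that the previous lemma applies; this $\varepsilon$ will be the scale witnessing the Bowen property.

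Next, let $(x,n)\in\cG(\eta)$ and $y\in B_{n}(x,\varepsilon)$. By the previous lemma, $d(f^{\ell}x,f^{\ell}y)\le r^{-\eta(n-\ell)}\varepsilon$ for every $0\le\ell\le n$, where $r=r(\alpha)>1$. Estimating the Birkhoff sums term by term and substituting $j=n-\ell$ gives
\[
|S_{n}\varphi(x)-S_{n}\varphi(y)|\le\sum_{\ell=0}^{n-1}C\,d(f^{\ell}x,f^{\ell}y)^{\beta}\le C\varepsilon^{\beta}\sum_{j=1}^{n}r^{-\eta\beta j}\le\frac{C\varepsilon^{\beta}r^{-\eta\beta}}{1-r^{-\eta\beta}}=:V.
\]
Since $r>1$ and $\eta,\beta>0$ we have $r^{-\eta\beta}<1$, so $V$ is finite and independent of both the orbit segment $(x,n)$ and the point $y$; this is precisely the Bowen property on $\cG(\eta)$ at scale $\varepsilon$, and the same argument works for each $\eta>0$.

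I do not expect a genuine obstacle here: all the substantive work is already packaged in the contraction lemma, and what remains is the routine H\"older-plus-geometric-series computation. The only points requiring care are to use the scale $\varepsilon$ for which the contraction lemma is valid, and to confirm that the resulting constant $V$ is uniform over all of $\cG(\eta)$ — which is automatic because the finite sum is bounded by the convergent infinite geometric series, independently of $n$.
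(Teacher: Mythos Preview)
Your proof is correct and follows essentially the same route as the paper: both apply the contraction estimate from the preceding lemma termwise and sum the resulting geometric series in $r^{-\eta\beta}$ to obtain a uniform Bowen constant. Your version even makes explicit the passage to the Milnor metric via Lipschitz equivalence, which the paper leaves implicit at this point.
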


\begin{proof}
Suppose $(x,n)\in \cG(\eta)$ and $y\in B_{n}(x,\varepsilon)$. Let $\varphi: J(f)\rightarrow \mathbb{R}$ be H\"older continuous with H\"older constant $K$ and H\"older exponent $\alpha$.
Then we have 
    \begin{align*}
        |S_{n}\varphi(x)- S_{n}\varphi(y)| &\leq  \sum_{\ell=0}^{n-1} d(f^{\ell}x,f^{\ell}y)^{\alpha}\\
        &\leq K \sum_{\ell=0}^{n-1}r^{-\eta(n-\ell)\alpha} \varepsilon^{\alpha}\\
        & = K\varepsilon^{\alpha}  \sum_{k=1}^{n}\left(\frac{1}{r^{\eta\alpha}}\right)^k\\
        & \leq K \varepsilon^{\alpha} \frac{1}{1-r^{-\eta\alpha} } \qedhere
    \end{align*}
\end{proof}

\subsection{Pressure Estimate}

We adapt an argument by Call and the second-named author \cite{CallT22} to  verify the pressure gap condition $P(B(\eta), \varphi)<P(\varphi)$.
\begin{prop}[\cite{Call22}, Proposition 5.9]\label{pressureEstimate}
    For any $\lambda$-decomposition, if the entropy map is upper semicontinuous
    \begin{align}
        \lim_{\eta\downarrow 0} P(B(\eta),\varphi)\leq \sup \left\{P_{\nu}(\varphi) \big| \int \lambda\ d\nu = 0 \right\} = P(B_{\infty},\varphi) \label{pressureBound}
    \end{align}
where  $B_{\infty} = \bigcap_{n\in \bbN} f^{-n}\lambda^{-1}(0)$.
\end{prop}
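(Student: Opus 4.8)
The plan is to adapt Misiurewicz's proof of the upper bound in the variational principle to the collections $B(\eta)$: for each $\eta>0$ I would produce an $f$-invariant measure $\mu^\eta$ with $\int\lambda\,d\mu^\eta\le\eta$ and $h_{\mu^\eta}(f)+\int\varphi\,d\mu^\eta\ge P(B(\eta),\varphi)$, and then use upper semicontinuity of the entropy map to pass to a limit as $\eta\downarrow0$. First I would record two soft reductions. Since $\eta\mapsto B(\eta)$ is increasing, $\eta\mapsto P(B(\eta),\varphi)$ is non-decreasing, so $L\coloneqq\lim_{\eta\downarrow0}P(B(\eta),\varphi)=\inf_{\eta>0}P(B(\eta),\varphi)$ exists. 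And since $\lambda\ge0$ and $\lambda^{-1}(0)=J(f)\setminus E$ is closed, $B_\infty=\bigcap_{n\ge0}f^{-n}\lambda^{-1}(0)$ is closed and forward $f$-invariant, and an invariant $\nu$ satisfies $\int\lambda\,d\nu=0$ iff $\nu(\lambda^{-1}(0))=1$ iff $\nu(B_\infty)=1$ (invariance gives the last equivalence). The variational principle applied to the compact subsystem $(B_\infty,f|_{B_\infty})$, together with $h_\nu(f|_{B_\infty})=h_\nu(f)$ for $\nu$ carried by $B_\infty$, then gives $P(B_\infty,\varphi)=\sup\{h_\nu(f)+\int\varphi\,d\nu:\nu(B_\infty)=1\}=\sup\{P_\nu(\varphi):\int\lambda\,d\nu=0\}$, where $P_\nu(\varphi)=h_\nu(f)+\int\varphi\,d\nu$; this is the asserted equality, so it remains only to show $L\le\sup\{P_\nu(\varphi):\int\lambda\,d\nu=0\}$.

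Next I would fix $\eta,\varepsilon>0$. For each $n$ choose an $(n,\varepsilon)$-separated $E_n\subseteq B(\eta)_n$ with $\sum_{x\in E_n}e^{S_n\varphi(x)}$ at least half of $\Lambda(B(\eta),\varphi,\varepsilon,n)$, let $\sigma_n$ be the probability measure on $E_n$ giving $x$ mass proportional to $e^{S_n\varphi(x)}$, and set $\mu_n=\tfrac1n\sum_{k=0}^{n-1}f^k_*\sigma_n$. Passing to a subsequence $n_i$ with $\tfrac1{n_i}\log\Lambda(B(\eta),\varphi,\varepsilon,n_i)\to P(B(\eta),\varphi,\varepsilon)$ and $\mu_{n_i}\to\mu$ weak$^*$, the measure $\mu$ is $f$-invariant. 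Choosing a finite partition $\xi$ with $\operatorname{diam}\xi<\varepsilon$ and $\mu(\partial\xi)=0$, the standard Misiurewicz estimates — each atom of $\bigvee_{j=0}^{n-1}f^{-j}\xi$ carries at most one point of $E_n$; subadditivity and concavity of entropy across length-$q$ blocks; continuity at $\mu$ of $\nu\mapsto H_\nu(\bigvee_{j=0}^{q-1}f^{-j}\xi)$ since $\mu(\partial\xi)=0$; then $q\to\infty$ — yield $P(B(\eta),\varphi,\varepsilon)\le h_\mu(f)+\int\varphi\,d\mu$. Moreover every $x\in E_n$ has $\tfrac1n\sum_{i=0}^{n-1}\lambda(f^ix)<\eta$, so averaging over $\sigma_n$ and over $k$ gives $\int\lambda\,d\mu_n<\eta$; since $\lambda$ is lower semicontinuous this passes to $\int\lambda\,d\mu\le\eta$. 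Letting $\varepsilon\downarrow0$, using $P(B(\eta),\varphi)=\lim_{\varepsilon\to0}P(B(\eta),\varphi,\varepsilon)$ and weak$^*$ compactness of $M(J(f),f)$, I obtain an invariant $\mu^\eta$ with $\int\lambda\,d\mu^\eta\le\eta$ and $h_{\mu^\eta}(f)+\int\varphi\,d\mu^\eta\ge P(B(\eta),\varphi)$.

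Finally I would take $\eta_j\downarrow0$ and a weak$^*$ limit $\mu$ of a subsequence of $(\mu^{\eta_j})$. Then $\int\varphi\,d\mu=\lim_j\int\varphi\,d\mu^{\eta_j}$ by continuity of $\varphi$; $\int\lambda\,d\mu\le\liminf_j\int\lambda\,d\mu^{\eta_j}\le\liminf_j\eta_j=0$ by lower semicontinuity of $\lambda$, hence $\int\lambda\,d\mu=0$ as $\lambda\ge0$; and — the sole place the hypothesis enters — $h_\mu(f)\ge\limsup_j h_{\mu^{\eta_j}}(f)$ by upper semicontinuity of the entropy map. Adding, $P_\mu(\varphi)\ge\limsup_j\bigl(h_{\mu^{\eta_j}}(f)+\int\varphi\,d\mu^{\eta_j}\bigr)\ge\limsup_j P(B(\eta_j),\varphi)=L$, and since $\int\lambda\,d\mu=0$ this gives $L\le\sup\{P_\nu(\varphi):\int\lambda\,d\nu=0\}$, completing the proof. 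The main obstacle is the entropy inequality $P(B(\eta),\varphi,\varepsilon)\le h_\mu(f)+\int\varphi\,d\mu$ of the middle step — this is precisely Misiurewicz's computation and needs care with the choice of partition and the block combinatorics, though it is by now routine. That $B(\eta)$ is merely a collection of orbit segments rather than a subsystem is harmless: it enters only through the averaged constraint $\int\lambda\,d\mu_n<\eta$, which survives the limit by lower semicontinuity of $\lambda$. (In the application to parabolic rational maps the entropy map is automatically upper semicontinuous by positive expansivity, Theorem~\ref{thm:expansive}, so the hypothesis is met.)
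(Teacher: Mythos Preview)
The paper does not prove this proposition; it is quoted as \cite[Proposition 5.9]{Call22} and used as a black box. So there is no in-paper proof to compare against, and your Misiurewicz-style construction is exactly the natural approach and is essentially correct.

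One small correction: you assert that upper semicontinuity of the entropy map is used at ``the sole place'' in the final $\eta\downarrow0$ limit, but as written your argument needs it twice. In the middle paragraph, for fixed $\eta$ you produce for each $\varepsilon>0$ an invariant measure $\mu_\varepsilon$ with $h_{\mu_\varepsilon}(f)+\int\varphi\,d\mu_\varepsilon\ge P(B(\eta),\varphi,\varepsilon)$ and $\int\lambda\,d\mu_\varepsilon\le\eta$; to pass to a single $\mu^\eta$ satisfying the inequality with $P(B(\eta),\varphi)=\lim_{\varepsilon\to0}P(B(\eta),\varphi,\varepsilon)$ you must take a weak$^*$ limit along $\varepsilon\to0$ and again invoke upper semicontinuity of entropy. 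This is harmless (the hypothesis is available), and you can also collapse both limits into a single diagonal sequence if you prefer to use it only once. In the specific application of this paper, positive expansivity gives $P(B(\eta),\varphi)=P(B(\eta),\varphi,\varepsilon)$ for all $\varepsilon$ below the expansivity constant, which removes the $\varepsilon\to0$ step entirely; but the proposition is stated in greater generality, so you should not rely on that.
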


Parabolic rational maps are positively expansive, so  the entropy map is upper semicontinuous. Therefore, we can proceed by computing the right hand side of (\ref{pressureBound}) in Proposition \ref{pressureEstimate}.
\begin{lemma} \label{average}
For $\lambda\coloneqq \chi_{E}$ as defined previously, then $P(B_{\infty},\varphi) = A(\Omega, \varphi) $ where
     \begin{align*}
    A(\Omega, \varphi) \coloneqq \max \left\{\frac{1}{n}\sum_{i=0}^{n-1} \varphi (f^iw): w \in \Omega, f^n w = w \right\},
\end{align*}

\end{lemma}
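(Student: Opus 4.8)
The plan is to identify $B_\infty$ explicitly and then read off the pressure of $\varphi$ on it straight from the definition. Since $\lambda=\chi_E$ with $E=E(\alpha)=J\setminus\overline{B(\Omega,2\alpha)}$, we have $\lambda^{-1}(0)=\overline{B(\Omega,2\alpha)}$, so
\[
B_\infty=\bigcap_{n\in\bbN}f^{-n}\bigl(\overline{B(\Omega,2\alpha)}\bigr)
\]
is the set of $x\in J(f)$ whose forward orbit remains in $\overline{B(\Omega,2\alpha)}$. Shrinking the fixed $\alpha$ from \S\ref{s.milnor} if necessary (all of the constraints on $\alpha$ allow it to be taken smaller), we may assume the closed balls $\overline{B(p,2\alpha)}$, $p\in\Omega$, are pairwise disjoint and that $f\bigl(\overline{B(p,2\alpha)}\bigr)$ meets no $\overline{B(q,2\alpha)}$ with $q\neq p$; this is possible since each $p\in\Omega$ is fixed and $f$ is Lipschitz near $p$. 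Hence any orbit contained in $\overline{B(\Omega,2\alpha)}$ in fact stays in a single $\overline{B(p,2\alpha)}$, and it suffices to analyse one parabolic fixed point at a time.

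The key geometric input is the claim that, for $\alpha$ small enough,
\[
B_\infty=\Omega .
\]
The inclusion $\Omega\subseteq B_\infty$ is immediate since every $p\in\Omega$ is fixed. For the reverse inclusion, fix $p\in\Omega$; I would invoke the Leau--Fatou Flower Theorem (see \cite[Appendix B]{milnor2critical} or \cite{Milnor}) together with the fact that a small neighbourhood of $p$ meets $J(f)$ only in $\{p\}$ and in the repelling petals of $f$ at $p$, on each of which $f$, in the Fatou coordinate, is a bounded perturbation of the translation $\zeta\mapsto\zeta+1$. Consequently the forward orbit of any point of $J(f)\cap\overline{B(p,2\alpha)}$ other than $p$ leaves $\overline{B(p,2\alpha)}$ after finitely many steps, so such a point is not in $B_\infty$. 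This is the one step where the local dynamics at the parabolic points genuinely enters, and I expect it to be the main obstacle; equivalently, one is asserting the standard fact that $\Omega$ is the only $f$-invariant subset of $J(f)$ contained in $\overline{B(\Omega,2\alpha)}$, which is also implicit in \cite{DU91}.

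Granting $B_\infty=\Omega$, it remains to compute $P(B_\infty,\varphi)$ from the definition. Since $B_\infty=\Omega$ is a finite set consisting of fixed points, $S_n\varphi(w)=n\varphi(w)$ for each $w\in\Omega$, and every $(n,\varepsilon)$-separated subset of $B_\infty$ has at most $|\Omega|$ elements; together with the choice $E=\{w^\ast\}$ for a maximiser $w^\ast$, this gives
\[
e^{\,n\max_{w\in\Omega}\varphi(w)}\ \le\ \Lambda\bigl(B_\infty,\varphi,\varepsilon,n\bigr)\ \le\ |\Omega|\,e^{\,n\max_{w\in\Omega}\varphi(w)}
\]
for every $\varepsilon>0$ and $n\in\bbN$. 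Applying $\tfrac1n\log(\,\cdot\,)$, letting $n\to\infty$ and then $\varepsilon\to0$ yields $P(B_\infty,\varphi)=\max_{w\in\Omega}\varphi(w)$. Finally, under the standing assumption that $\Omega$ consists of fixed points we have $\frac1n\sum_{i=0}^{n-1}\varphi(f^iw)=\varphi(w)$ whenever $w\in\Omega$ and $f^nw=w$, so $\max_{w\in\Omega}\varphi(w)=A(\Omega,\varphi)$, completing the proof. (Alternatively, one may instead appeal to Proposition~\ref{pressureEstimate}: if $\nu$ is $f$-invariant with $\int\lambda\,d\nu=0$ then $\nu(E)=0$, hence $\nu$ is carried by $B_\infty=\Omega$ by invariance, hence $\nu$ is a convex combination of the point masses $\delta_w$, $w\in\Omega$; these have zero entropy, so $P_\nu(\varphi)=\int\varphi\,d\nu\le\max_{w\in\Omega}\varphi(w)$, with equality for $\nu=\delta_{w^\ast}$.)
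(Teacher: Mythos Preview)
Your proof is correct, and the overall architecture --- show $B_\infty=\Omega$, then read off the pressure --- matches the paper's. The difference is entirely in how you establish $B_\infty=\Omega$.

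You argue via the local dynamics at each parabolic fixed point, invoking the Leau--Fatou Flower Theorem to conclude that every point of $J(f)\cap\overline{B(p,2\alpha)}\setminus\{p\}$ eventually leaves the ball. This is valid, but it is heavier than necessary, and it forces you to shrink $\alpha$ further to separate the balls and control $f(\overline{B(p,2\alpha)})$. The paper instead uses positive expansivity directly: $\alpha$ was already chosen in \S\ref{s.milnor} so that $4\alpha$ is an expansivity constant for $(J(f),f,d)$. If $x\in B_\infty$, then $d(f^n x,\Omega)\le 2\alpha$ for all $n$; since distinct points of $\Omega$ are more than $2\alpha$ apart (again by expansivity, as they are all fixed), there is a single $w\in\Omega$ with $d(f^n x,f^n w)\le 2\alpha$ for all $n$, and expansivity forces $x=w$. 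No further shrinking of $\alpha$ and no petal analysis are needed. Your explicit computation of $P(\Omega,\varphi)$ via the partition sums is a nice touch the paper omits; it simply asserts $P(B_\infty,\varphi)=P(\Omega,\varphi)=A(\Omega,\varphi)$.
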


\begin{proof}
We use the same argument as in the proof of positive expansivity in \cite{DU91}.
First, note that
\begin{align*}
    B_{\infty} =  \bigcap_{n\in \bbN \cup \{0\}} f^{-n} \overline{B(\Omega,2\alpha)}. 
\end{align*}
If $x\in B_{\infty}$, then $d(f^{n}x,\Omega)\leq 2\alpha$ for all $n\in \bbN$. 
Since $2\alpha$ is a constant of expansivity, it is smaller than the minimum distance between distinct points of $\Omega$. Hence, there exists $w \in \Omega$ such that
$d(f^{n}x,w)\leq 2\alpha$ for all $n\in \bbN$. Recall that we assumed $f$ only has fixed points, so $d(f^{n}x,f^{n}w)\leq 2\alpha$ for all $n\in \bbN$. By expansivity,  this implies $x=w$, and thus $B_\infty = \Omega$. It follows that  $P(B_{\infty},\varphi) = P(\Omega,\varphi) = A(\Omega, \varphi)$.
\end{proof}

\subsection{Completing the proof of Theorem \ref{thm:mainA}}
We have built up to the following statement.
\begin{thm} \label{generalthm}
    Let $f:\RS\rightarrow \RS$ be a parabolic rational map of degree $d\geq 2$ and $\Omega = \{w_{1},\dots,w_{k}\}$ be the set of its rationally indifferent periodic points and suppose that all of the periodic points are fixed points. Let $\alpha$ be chosen as at the start of Section 3.2.  If $\varphi$  has the Bowen property on $\cG(\eta)$ for every $\eta>0$,  where $\cG(\eta)$ is as in \eqref{good} with $E(\alpha)=J\setminus \overline{B(\Omega, 2\alpha)}$ and $\lambda=\chi_{E}$, and $A(\Omega, \varphi)< P(J(f),\varphi)$, then $\varphi$  has a unique equilibrium state w.r.t. the dynamical system $(J(f),f)$ and it is fully supported on $J(f)$.
\end{thm}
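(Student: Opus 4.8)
The plan is to deduce Theorem~\ref{generalthm} from the Climenhaga--Thompson criterion, Theorem~\ref{thm: CT16}, applied to the compact system $(J(f),f)$ equipped with the Milnor metric $d=d_\alpha$ fixed in \S\ref{s.milnor}, the continuous potential $\varphi$, and the $\lambda$-decomposition $(\cG(\eta),\cB(\eta))$ of Definition~\ref{decomposition} associated to $\lambda=\chi_{E(\alpha)}$ as in \eqref{choiceDecomp}, for a suitably small $\eta>0$. The hypotheses of Theorem~\ref{thm: CT16} on the system itself are immediate: $f$ is continuous and positively expansive by Theorem~\ref{thm:expansive} (in fact at scale $4\alpha$ in the metric $d$), and $\varphi$ is continuous. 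So it remains to choose $\eta$ and to verify conditions (1)--(3).

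First I would fix $\eta$ using the pressure estimate, which is condition (3). Positive expansivity makes the entropy map upper semicontinuous, so Proposition~\ref{pressureEstimate} gives $\lim_{\eta\downarrow 0}P(\cB(\eta),\varphi)\le P(B_\infty,\varphi)$, and Lemma~\ref{average} identifies the right-hand side as $A(\Omega,\varphi)$. By hypothesis $A(\Omega,\varphi)<P(J(f),\varphi)$, and since $\eta\mapsto P(\cB(\eta),\varphi)$ is non-decreasing (the family $\cB(\eta)$ grows with $\eta$), there is $\eta_0>0$ with $P(\cB(\eta),\varphi)<P(\varphi)$ for every $\eta\in(0,\eta_0)$; fix such an $\eta$. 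For condition (1) the point is that $\cG(\eta)\subseteq\cD(\alpha)$: if $(x,n)\in\cG(\eta)$ then taking $k=1$ in \eqref{good} forces $\lambda(f^{n-1}x)\ge\eta>0$, hence $\lambda(f^{n-1}x)=1$ since $\lambda$ is $\{0,1\}$-valued, i.e.\ $f^{n-1}x\in E(\alpha)$ and so $(x,n)\in\cD(\alpha)$. Lemma~\ref{parabolicSpecification}, which through Corollary~\ref{cor: propforParabolic} and Lemma~\ref{suffCon} holds at every scale $\varepsilon>0$, then gives specification of $\cG(\eta)$ at every scale $\delta>0$. Condition (2), the Bowen property of $\varphi$ on $\cG(\eta)$, is exactly one of the standing hypotheses of Theorem~\ref{generalthm} (and, in the H\"older case, is supplied by Lemma~\ref{HoldertoBowen}).

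Before invoking Theorem~\ref{thm: CT16} I would also check that $(\cG(\eta),\cB(\eta))$ genuinely is a decomposition in the required sense, namely that the maximal-$k$ recipe in Definition~\ref{decomposition} always produces a good prefix $(x,n-k)\in\cG(\eta)$ together with a bad suffix $(f^{n-k}x,k)\in\cB(\eta)$ summing to $n$; this is the one-sided $\lambda$-decomposition of \cite{Call22}, and if needed it follows from a one-line check that if $(x,n-k)$ failed to lie in $\cG(\eta)$ then some strictly longer suffix would lie in $\cB(\eta)$, contradicting maximality of $k$. With conditions (1)--(3) in hand, Theorem~\ref{thm: CT16} yields a unique equilibrium state $\mu$ for $(J(f),f,\varphi)$, fully supported on $J(f)$, which is the conclusion of Theorem~\ref{generalthm}. (Theorem~\ref{thm:mainA} then follows by passing to the iterate $f^n$ that makes all parabolic points fixed, as indicated after Definition~\ref{decomposition}.)

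\textbf{Main obstacle.} At this stage the argument is essentially bookkeeping, since the three structural facts have already been established in \S\ref{background}--\S\ref{mainresults}. If one must single out the substantive step it is the pressure estimate (3): the combination of Proposition~\ref{pressureEstimate} (the Call--Thompson bound, via upper semicontinuity of entropy) with the computation $P(B_\infty,\varphi)=A(\Omega,\varphi)$ of Lemma~\ref{average}, which rests on the Denker--Urba\'nski positive-expansivity argument. The specification and Bowen inputs are, by contrast, either immediate from $\cG(\eta)\subseteq\cD(\alpha)$ or a direct hypothesis.
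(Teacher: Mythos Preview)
Your proposal is correct and follows essentially the same argument as the paper: verify the hypotheses of Theorem~\ref{thm: CT16} using positive expansivity (Theorem~\ref{thm:expansive}), the inclusion $\cG(\eta)\subseteq\cD(\alpha)$ via the $k=1$ observation together with Lemma~\ref{parabolicSpecification} for specification, the standing Bowen hypothesis, and Proposition~\ref{pressureEstimate} with Lemma~\ref{average} for the pressure gap. You add a little extra care (monotonicity in $\eta$, validity of the one-sided $\lambda$-decomposition) that the paper leaves implicit, but the route is the same.
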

\begin{proof}
We apply Theorem \ref{thm: CT16}.  Given $f$ be a parabolic rational map $f:\RS\rightarrow \RS$ of degree $d\geq 2$. By Theorem \ref{thm:expansive}, we know $f$ is positively expansive. By Lemma \ref{parabolicSpecification}, $f$ satisfies the specification property on $D(\alpha)$. For any $(x,n)\in \cG(\eta)$, choosing $k=1$, shows that  $\lambda(f^{n-1}x)\geq \eta$. Thus, for $\alpha>0$, $D(\alpha) \supseteq G(\eta)$. By Proposition \ref{pressureEstimate} and Lemma \ref{average}, the pressure gap condition is satisfied when $A(\Omega, \varphi)< P(\varphi)$.
\end{proof}
By Lemma \ref{HoldertoBowen}, any $\varphi:J(f) \to \bbR$ which is H\"older satisfies the `Bowen on good' hypothesis. To conclude the proof of Theorem \ref{thm:mainA}, all that remains is to remove our assumption (made only for convenience) that all the points in $\Omega$ are fixed points. If we did not assume that $\Omega(f)$ contains only fixed points, we can let $g=f^k$ be an iterate for which $\Omega(g)$ contains only fixed points, and not that $\Omega(f)=\Omega(g)$. Assume that  $A(\Omega, \varphi; f)= P(\Omega, \varphi; f) < P(\varphi;f)$. Then $A(\Omega, S_k \varphi; f^k) = kP(\Omega, \varphi; f) < kP(\varphi;f) = P(S_k\varphi; f^k)$. Thus we can apply Theorem \ref{generalthm} to obtain a unique equilibrium state for $(S_k\varphi; f^k)$. It follows from \S \ref{tfbackground} that we have a unique equilibrium state for $(\varphi, f)$.

\section{Family of geometric potentials} \label{geo.potential}
As an application of Theorem \ref{thm:mainA}, we consider the family of geometric potentials 
\[
\varphi_t(z) \coloneqq -t\log|f'(z)|.
\] 
Since there are no critical points in $J(f)$, the functions $\varphi_t: J(f) \to \bbR$ are well-defined and H\"older continuous. Note that the definition of $\varphi$ depends on the choice of metric. If we define the geometric potential using a different metric, it follows from \eqref{derivativeformula} that we differ by a coboundary, so the pressure and the set of equilibrium states are the same. Let $h = \text{HD}(J(f))$ denote the Hausdorff dimension of $J(f)$. Our claim is that $\varphi_t$ has a unique equilbrium state for $t<h$. This is not a new result, and we discuss other ways to obtain it below, but it is instructive that it follows easily from our criterion.
\begin{proof}[Proof of Corollary \ref{cor}]

Since $\varphi_t (z)\coloneqq -t\log|f'(z)|$ is H\"older continuous on $J(f)$, it thus has the Bowen property on $\cG(\eta)$. By Corollary 16 in \cite{DU91}, $P(\varphi_{t})>0$ for all $t< h$, where $h$ is the Hausdorff dimension of $J(f)$. Since $|(f^n)'(\omega)| =1$ for all $\omega\in \Omega$ with $f^n\omega = \omega$, we have $A(\Omega, \varphi) = 0$. Thus, $A(\varphi_{t})<P(\varphi_{t})$ and by Theorem A, there exists a unique equilibrium state for $\varphi_{t}$.
\end{proof}
\begin{remark} 
The proof can be obtained another way using existing results by checking that $\varphi_t$ is hyperbolic for $t<h$. Note that hyperbolicity is not immediate since we do not have a sign on $\sup_J \frac1n S_n \varphi_t$ in the Euclidean metric. For example, there are parabolic rational maps for which there exists $z\in f^{-1} \Omega$ with $|f'(z)|<1$ and $f(z)$ a fixed point. We see that $S_n \varphi_t(z) > 0$ for all $n$. 
Thus we cannot simply use the fact that $P(\varphi_t) > 0$ for all $t < h$ to conclude that $\varphi_t$ is hyperbolic for $t<h$.
Nevertheless, hyperbolicity can easily be verified from the existing literature. Przytycki showed in \cite{fP93} that any invariant measure has non-negative Lyapunov exponent, see \cite[Appendix A]{RL20} for a short recent proof.  Thus for $t >0$ and any measure $\mu$, we have $-t\int  \log|f’| d \mu \leq 0$. A potential is hyperbolic if and only if $\sup \int \varphi d \mu < P(\varphi)$, see \cite[Proposition 3.1]{IR}.  If $t<h$, we thus have $\sup \int \varphi_t d \mu \leq 0 <P(\varphi_t)$ and it follows that $\varphi_t$ is hyperbolic. 
\end{remark}
\begin{remark}
Another approach would be to use the existence of a Markov partition \cite{dR82, Aaronson} for a parabolic rational map and recode to a countable-state topological Markov chain with good distortion properties.  This approach was taken when the analogue of Corollary \ref{cor} was proved for the Manneville-Pomeau map by Sarig in \cite[Claim 2]{oS01}. We would not be surprised if this approach has been developed for parabolic rational maps previously, but we could not find it in the literature; the paper \cite{Aaronson} develops related tools but does not investigate equilibrium states. We also mention that Przytycki and Rivera-Letelier  \cite{PRL} used inducing schemes to study the potentials $-t\log|f'(z)|$ for an interval of values around $0$ for a wide class of non-uniformly hyperbolic rational maps allowing critical points in the Julia set, although they exclude parabolics except for quadratic map with real coefficients. 
\end{remark}
\begin{figure}[h!]
    \centering
   \begin{tikzpicture}[domain=-.5:2.5, scale=1.50]
  \draw[->] (-1,0) -- (3,0) node[right] {$t$};
  \draw[->] (0,-1.2) -- (0,2) node[above] {$P(\phi_t)$};
  \draw[color=blue,very thick,domain=-.5:1,<-]   plot (\x,{1.5*e^(-\x)-1.5*e^-1})    node[right] {};
   \draw[color=blue,very thick,domain=1:2.5,->]   plot (\x,{0})    node[right] {};
  \filldraw[blue] (1,0) circle (2pt) node[below]{$h$};
  \filldraw[blue] (0,.948) circle (2pt) node[above right]{$h_{\text{top}}(f)=\log d$};
\end{tikzpicture}
    \caption{Pressure of the potential $\phi_{t}(x)=-t\log|f'(x)|$ for a parabolic rational map, $f$. The Hausdorff dimension of $J(f)$ is $h$.}
    \label{fig:PressureMP}
\end{figure}
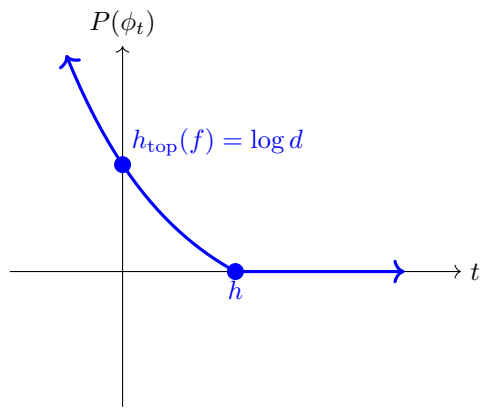

By results of Walters \cite{pW92}, uniqueness of equilibrium states for $(-\infty, h)$ holds if and only if the pressure function $t \to P(\varphi_t)$ is $C^1$ on this interval. There is a phase transition at $t=h$ where uniqueness of the equilibrium state and smoothness of the pressure function fails. Our argument thus gives a simple justification for the famous picture of a non-uniformly hyperbolic pressure function in this setting. We remark that Haydn and Isola \cite{HI} showed by hard analysis of the spectrum of the transfer operator that the pressure function is analytic for $t\in[0,h)$ which thus provides the earliest indirect proof that we know of for our corollary.

\section{Positive entropy and the hyperbolicity criterion} \label{pos.entropy}
We now apply a method by Call and the second-named author \cite{CallT22} to show that our equilibrium states have positive entropy.  These techniques were developed further by Call and the following result is a version of \cite[Theorem B]{Call22}. The non-invertible case follows the same proof, with some simpler bookkeeping. The statement we give here is also simpler because we assume a global positive expansivity condition.
\begin{thm}[Call] \label{thm: K}
Let $X$ be a compact metric space, $T:X\rightarrow X$ a positively expansive continuous map, and $\varphi:X\rightarrow \mathbb{R}$ a continuous potential function.  
Assume that $(X, f)$ admits a one-sided $\lambda$-decomposition with the following properties:
\begin{enumerate}
    \item For all $\eta > 0$, the set $\mathcal G(\eta)$ has specification;
    \item For all $\eta > 0$, the potential $\varphi$ has the Bowen property on
    $\mathcal G(\eta)$.
\end{enumerate}
Assume furthermore that
\begin{equation} \label{prodpressure}
P\Bigl(
\bigcap_{n \in \mathbb{N} \cup \{0\}} (f^{-n} \times f^{-n})\,\widetilde{\lambda}^{-1}(0),
\Phi
\Bigr)
< 2P(\varphi; f) = P(\Phi, f \times f),
\end{equation}
where $\Phi \colon X \times X \to \mathbb{R}$ is defined by $\Phi(x,y) = \varphi(x) + \varphi(y)$,
and $\widetilde{\lambda} \colon X \times X \to \mathbb{R}$ is given by $\widetilde{\lambda}(x,y) = \lambda(x)\lambda(y)$. Then $(X,f,\varphi)$ has a unique equilibrium state, and this equilibrium state
has the $K$--property.
\end{thm}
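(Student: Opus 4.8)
The plan is to run the Climenhaga--Thompson criterion (Theorem~\ref{thm: CT16}) on the \emph{product} system $(X\times X,\,f\times f)$ with potential $\Phi$ and the $\widetilde\lambda$-decomposition $(\widetilde\cG(\eta),\widetilde\cB(\eta))$ determined by $\widetilde\lambda=\lambda\otimes\lambda$, and then to transport the conclusions back to $(X,f,\varphi)$. First I would record the structural facts needed on the product. The map $f\times f$ is positively expansive: with the $\max$ metric on $X\times X$, a point lying in every product Bowen ball about $(a,b)$ must have each coordinate equal to $a$ and to $b$. Hence the entropy map of $(X\times X,f\times f)$ is upper semicontinuous. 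Since $\lambda$ is $\{0,1\}$-valued, $\widetilde\lambda(x,y)=\lambda(x)\lambda(y)\le\min\{\lambda(x),\lambda(y)\}$, so $((x,y),n)\in\widetilde\cG(\eta)$ forces $(x,n)\in\cG(\eta)$ and $(y,n)\in\cG(\eta)$. This inclusion is the point: running the specification of hypothesis~(1) in each factor (with the single transition time $\tau(\delta)$ that serves all of $\cG(\eta)$) and forming the shadowing orbit coordinatewise shows $\widetilde\cG(\eta)$ has specification at every scale, and running hypothesis~(2) in each factor gives the Bowen property of $\Phi$ on $\widetilde\cG(\eta)$ with constant $2V$. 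Finally, Proposition~\ref{pressureEstimate} applied to $(X\times X,f\times f,\Phi,\widetilde\lambda)$ yields $\lim_{\eta\downarrow 0}P(\widetilde\cB(\eta),\Phi)\le P(\widetilde B_\infty,\Phi)$ with $\widetilde B_\infty=\bigcap_{n}(f^{-n}\times f^{-n})\widetilde\lambda^{-1}(0)$, and this is $<2P(\varphi)=P(\Phi,f\times f)$ by~\eqref{prodpressure}. Fixing $\eta_0>0$ with $P(\widetilde\cB(\eta_0),\Phi)<P(\Phi)$, Theorem~\ref{thm: CT16} gives a unique equilibrium state $m$ for $\Phi$.

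Next I would identify $m$ and descend to $X$. Upper semicontinuity of the entropy map of $(X,f)$ and compactness of the invariant measures furnish at least one equilibrium state $\mu$ for $\varphi$. Since $h_{\mu\times\mu}(f\times f)=2h_\mu(f)$ and $\int\Phi\,d(\mu\times\mu)=2\int\varphi\,d\mu$, the measure $\mu\times\mu$ realizes $P(\Phi)=2P(\varphi)$, so $m=\mu\times\mu$. If $\mu'$ is any equilibrium state of $\varphi$, the same computation makes $\mu'\times\mu'$ an equilibrium state of $\Phi$, so $\mu'\times\mu'=\mu\times\mu$ and hence $\mu'=\mu$. Thus $\varphi$ has a unique equilibrium state $\mu$, and $\mu\times\mu$ is the unique equilibrium state of $\Phi$. (Uniqueness on $X$ can also be read directly from Theorem~\ref{thm: CT16} on $(X,f,\varphi)$, since $B_\infty\times X\subseteq\widetilde B_\infty$ together with \eqref{prodpressure} forces $P(B_\infty,\varphi)<P(\varphi)$, and then Proposition~\ref{pressureEstimate} supplies the suffix gap.)

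For the $K$--property I would argue by contradiction using a self-joining over the Pinsker factor. Suppose the Pinsker $\sigma$-algebra $\Pi$ of $(X,\mu,f)$ is nontrivial mod $\mu$; let $(Z,\nu,f_Z)$ be the Pinsker factor, which has zero entropy, and let $\mu\times_{\Pi}\mu$ be the relatively independent joining of $\mu$ with itself over $Z$. Both marginals of $\mu\times_{\Pi}\mu$ are $\mu$, so $\int\Phi\,d(\mu\times_{\Pi}\mu)=2\int\varphi\,d\mu$, and a standard computation with the Abramov--Rokhlin addition formula gives $h_{\mu\times_{\Pi}\mu}(f\times f)=2h_\mu(f)$ since $h_\nu(f_Z)=0$. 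Hence $\mu\times_{\Pi}\mu$ is also an equilibrium state of $\Phi$, so by the previous step $\mu\times_{\Pi}\mu=\mu\times\mu$. But $\mu\times_{\Pi}\mu$ is carried by $R_\Pi=\{(x,x'):\pi(x)=\pi(x')\}$, whose $(\mu\times\mu)$-measure equals $\int_Z\nu(\{z\})\,d\nu(z)$, which is $<1$ whenever $\nu$ is not a point mass. This contradiction shows $\Pi$ is trivial, i.e.\ $\mu$ has the $K$--property.

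I expect Step~1 to be the main obstacle: one must be careful that the $\widetilde\lambda$-decomposition of $X\times X$, although not literally the product of the $\lambda$-decompositions of $X$, does satisfy the inclusion into the ``coordinatewise good'' segments that powers the transfer of specification and the Bowen property, and that Proposition~\ref{pressureEstimate} genuinely applies on the product with precisely the set $\widetilde B_\infty$ that appears in hypothesis~\eqref{prodpressure}. The entropy bookkeeping in Step~3 --- that the relatively independent joining over a zero-entropy factor has the same entropy as the independent joining --- is the other delicate point, but it is routine once phrased through Abramov--Rokhlin and the vanishing of $h_\nu(f_Z)$.
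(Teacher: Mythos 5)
Your proposal is correct and follows essentially the same route as the paper (which defers to Call's argument): verify the Climenhaga--Thompson criteria for the $\widetilde\lambda$-decomposition of the product system $(X\times X, f\times f, \Phi)$ — coordinatewise specification and Bowen property plus the pressure gap via Proposition~\ref{pressureEstimate} and \eqref{prodpressure} — to get a unique equilibrium state $\mu\times\mu$ there, and then obtain the $K$-property by Ledrappier's Pinsker-joining argument, which the paper cites as \cite[Theorem 3.3]{Call22} and you reprove. The only minor point is that your inclusion of $\widetilde{\mathcal G}(\eta)$ into the coordinatewise good segments invokes $\lambda$ being $\{0,1\}$-valued, which the statement does not assume; for a general bounded $\lambda$ the same inclusion holds with $\eta$ replaced by $\eta/\sup\lambda$, so nothing in your argument is affected.
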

The method of proof is simply that the criteria guarantee a unique equilibrium state for $(X \times X, f \times f, \Phi)$. Ledrappier showed that a unique equilibrium state for $(X \times X, f \times f, \Phi)$ guarantees a unique equilibrium state with the $K$-property (i.e. every non-trivial factor of the measure has positive entropy) for the system $(X, f, \varphi)$, see \cite[Theorem 3.3]{Call22}. To show that pressure gap holds for the product system, we will apply the following result by Call.
\begin{thm}[\cite{Call22},Theorem 4.8]\label{thm: productGap}
Let $\mu$ be the unique equilibrium state of $(X,f,\varphi)$, and suppose that $\mu(\lambda^{-1}(0))<\frac{1}{2}$. If the entropy map is upper semicontinuous on the product system (in particular if $(X, f)$, and thus the product system, is positively expansive), then 
\[
P\Bigl(
\bigcap_{n \in \mathbb{N}\cup\{0\}} (f^{-n} \times f^{-n})\,\widetilde{\lambda}^{-1}(0),
\Phi
\Bigr)
< 2P(\varphi; f).
\]
\end{thm}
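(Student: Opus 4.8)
The plan is to realize the left-hand pressure as that of a genuine invariant subsystem, push it down to the two coordinate factors, and extract strictness from the uniqueness of $\mu$. First I would set
\[
Z\coloneqq\bigcap_{n\in\bbN\cup\{0\}}(f^{-n}\times f^{-n})\,\widetilde\lambda^{-1}(0),
\]
and let $\pi_1,\pi_2\colon X\times X\to X$ be the coordinate projections. Since $\lambda\geq 0$ is lower semicontinuous, so is $(x,y)\mapsto\lambda(x)\lambda(y)$, hence $\widetilde\lambda^{-1}(0)$ is closed; as it equals $\{(x,y):\lambda(x)=0\text{ or }\lambda(y)=0\}$, the set $Z$ is closed, $(f\times f)$-invariant, and satisfies
\[
Z\subseteq\big(\lambda^{-1}(0)\times X\big)\cup\big(X\times\lambda^{-1}(0)\big).
\]
Because $Z$ is invariant, $P(Z,\Phi)$ is exactly the topological pressure of $\Phi$ for the subsystem $(Z,f\times f)$. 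If $Z=\emptyset$ this is $-\infty$ and we are done; otherwise, since $(X,f)$ is positively expansive so is the product system, so it has finite topological entropy and an upper semicontinuous entropy map, and the variational principle for $(Z,f\times f,\Phi|_Z)$ attains its supremum. Fix $\nu\in M(Z,f\times f)$ with $P(Z,\Phi)=h_\nu(f\times f)+\int\Phi\,d\nu$.

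Next I would pass to the marginals $\nu_i\coloneqq(\pi_i)_*\nu$, which are $f$-invariant probability measures satisfying $\int\Phi\,d\nu=\int\varphi\,d\nu_1+\int\varphi\,d\nu_2$. Evaluating the containment above against $\nu$ gives $\nu_1(\lambda^{-1}(0))+\nu_2(\lambda^{-1}(0))\geq\nu(Z)=1$, so after relabeling we may assume $\nu_1(\lambda^{-1}(0))\geq\tfrac12$; since $\mu(\lambda^{-1}(0))<\tfrac12$ by hypothesis, $\nu_1\neq\mu$. Subadditivity of Kolmogorov--Sinai entropy under products gives $h_\nu(f\times f)\leq h_{\nu_1}(f)+h_{\nu_2}(f)$, so, with $P_\rho(\varphi)\coloneqq h_\rho(f)+\int\varphi\,d\rho$ the free energy,
\[
P(Z,\Phi)=h_\nu(f\times f)+\int\Phi\,d\nu\leq P_{\nu_1}(\varphi)+P_{\nu_2}(\varphi).
\]
Since $\mu$ is the \emph{unique} equilibrium state of $(X,f,\varphi)$, the functional $\rho\mapsto P_\rho(\varphi)$ attains the value $P(\varphi)$ only at $\mu$; hence $P_{\nu_1}(\varphi)<P(\varphi)$ while $P_{\nu_2}(\varphi)\leq P(\varphi)$, and adding these yields $P(Z,\Phi)<2P(\varphi;f)$.

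The step I expect to be the main obstacle is the \emph{strict} inequality. Applying the pointwise estimate to an arbitrary $\nu\in M(Z,f\times f)$ only delivers $P(Z,\Phi)\leq 2P(\varphi)$; upgrading to a strict inequality genuinely requires the supremum defining $P(Z,\Phi)$ to be realized by a measure, which is precisely what the upper semicontinuity hypothesis provides. An equivalent route that makes the gap quantitative: as $\lambda^{-1}(0)$ is closed, $\rho\mapsto\rho(\lambda^{-1}(0))$ is upper semicontinuous, so $K\coloneqq\{\rho\in M(X,f):\rho(\lambda^{-1}(0))\geq\tfrac12\}$ is weak$^*$ compact and omits $\mu$; upper semicontinuity of $\rho\mapsto P_\rho(\varphi)$ then forces $c\coloneqq\sup_{\rho\in K}P_\rho(\varphi)<P(\varphi)$, and since every $\nu\in M(Z,f\times f)$ has a marginal in $K$ we get $h_\nu(f\times f)+\int\Phi\,d\nu\leq c+P(\varphi)<2P(\varphi)$ uniformly in $\nu$. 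A secondary point to double-check is that $P(Z,\Phi)$ in the sense of the orbit-segment pressure of Section \ref{tfbackground} agrees with the topological pressure of the subsystem $(Z,f\times f)$; this holds because $Z$ is closed and $(f\times f)$-invariant, so $(n,\varepsilon)$-separated subsets of $Z$ are the same whether computed in $X\times X$ or in $Z$.
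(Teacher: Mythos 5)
Your proof is correct: realizing the left-hand side as the (classical) pressure of the compact, forward-invariant intersection set, bounding the free energy of any invariant measure there by the sum of the free energies of its marginals, and using $\mu(\lambda^{-1}(0))<\tfrac12$ together with upper semicontinuity of the entropy map to make the resulting gap strict is precisely the mechanism behind this statement. The paper itself does not prove it but imports it from \cite{Call22}, and your variational-principle/marginal argument (including your correct attention to where strictness comes from, via attainment of the supremum or the uniform bound over the compact set of measures giving $\lambda^{-1}(0)$ mass at least $\tfrac12$) is essentially the same as the cited proof.
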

The following result proves Theorem \ref{thm:posentropy} since measures with the $K$-property have positive entropy.
\begin{thm} 
Suppose we are in the setting of Theorem \ref{thm:mainA}, then
the unique equilibrium state has the $K$-property. 
\end{thm}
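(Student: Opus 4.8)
The strategy is to apply Theorem~\ref{thm: K}, so it suffices to verify its three hypotheses in our setting. Conditions (1) and (2) are already in hand: under the assumptions of Theorem~\ref{thm:mainA} (using Theorem~\ref{generalthm} after possibly passing to an iterate so that $\Omega$ consists of fixed points), the $\lambda$-decomposition with $\lambda = \chi_{E(\alpha)}$ has the property that $\cG(\eta)$ has specification for every $\eta>0$ (since $\cG(\eta) \subseteq \cD(\alpha)$ and Lemma~\ref{parabolicSpecification} applies), and $\varphi$ has the Bowen property on $\cG(\eta)$ by hypothesis. Since parabolic rational maps are positively expansive (Theorem~\ref{thm:expansive}), the product system $(J(f)\times J(f), f\times f)$ is also positively expansive, so its entropy map is upper semicontinuous. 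Thus the only thing left to check is the product pressure gap \eqref{prodpressure}.

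\textbf{Verifying the product pressure gap.} For this I would invoke Theorem~\ref{thm: productGap}. Let $\mu$ be the unique equilibrium state for $(J(f), f, \varphi)$ guaranteed by Theorem~\ref{thm:mainA}. The hypothesis of Theorem~\ref{thm: productGap} is that $\mu(\lambda^{-1}(0)) < \tfrac12$, i.e.\ $\mu(\overline{B(\Omega, 2\alpha)}) < \tfrac12$. To obtain this I would argue by contradiction: if $\mu(\overline{B(\Omega,2\alpha)}) \geq \tfrac12$, then a substantial part of the mass of $\mu$ sits in a small neighborhood of the parabolic set. Using the (positive) expansivity of $f$ and the fact (from the proof of Lemma~\ref{average}) that $B_\infty = \bigcap_{n\geq 0} f^{-n}\overline{B(\Omega,2\alpha)} = \Omega$, one shows that the ergodic components of $\mu$ concentrated near $\Omega$ must in fact be the point masses on the parabolic cycle, whose free energy is at most $A(\Omega,\varphi)$. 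Combining this with the pressure gap $A(\Omega,\varphi) < P(\varphi)$ and the fact that $\mu$ is \emph{the} equilibrium state (so its free energy equals $P(\varphi)$), one derives a contradiction: the portion of $\mu$ near $\Omega$ cannot carry half the mass without dragging the free energy below $P(\varphi)$. Alternatively, and perhaps more cleanly, since $\mu$ has positive entropy (Theorem~\ref{thm:posentropy}, proved via the pressure estimate $P(\cB(\eta),\varphi) < P(\varphi)$) and is fully supported, while any measure supported on $\Omega$ is a finite combination of point masses with zero entropy, the positive-entropy part of $\mu$ charges $E(\alpha)$; pushing $\alpha$ considerations through the $\lambda$-decomposition machinery of Proposition~\ref{pressureEstimate} then forces $\mu(\lambda^{-1}(0))$ to be strictly less than $1$, and in fact one can take $\alpha$ (equivalently, work with a smaller $E$, or note the bound is uniform) small enough that $\mu(\overline{B(\Omega,2\alpha)}) < \tfrac12$. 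Once $\mu(\lambda^{-1}(0)) < \tfrac12$ is established, Theorem~\ref{thm: productGap} delivers \eqref{prodpressure} immediately.

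\textbf{Conclusion.} With all three hypotheses of Theorem~\ref{thm: K} verified, we conclude that $(J(f), f, \varphi)$ has a unique equilibrium state with the $K$-property. This is the same measure as the unique equilibrium state from Theorem~\ref{thm:mainA} by uniqueness. As in the end of \S\ref{mainresults}, the assumption that $\Omega$ consists of fixed points is removed by passing to the iterate $g = f^k$ and the potential $S_k\varphi$: a $K$-equilibrium state for $(g, S_k\varphi)$ is a $K$-equilibrium state for $(f,\varphi)$, since the $K$-property and the equilibrium property are both preserved under passing between $f$ and $f^k$.

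\textbf{Main obstacle.} The crux is establishing the mass bound $\mu(\lambda^{-1}(0)) < \tfrac12$, which is the input to Theorem~\ref{thm: productGap}. This is where the hypothesis $A(\Omega,\varphi) < P(\varphi)$ and the positive-entropy conclusion genuinely enter: heuristically, the equilibrium state cannot place too much mass near the parabolic points precisely because doing so would cost too much entropy relative to the energy gain, contradicting the pressure gap. Making this quantitative — turning "$\mu$ has positive entropy and is fully supported" into the explicit numerical bound $\tfrac12$, uniformly in the (fixed but small) choice of $\alpha$ — is the step requiring the most care, and it is essentially a reprise of the argument underlying the pressure estimate $P(\cB(\eta),\varphi) < P(\varphi)$ combined with the structure of $B_\infty = \Omega$.
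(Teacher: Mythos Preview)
Your overall plan is right --- apply Theorem~\ref{thm: K} and reduce to checking the product pressure gap via Theorem~\ref{thm: productGap} --- but the way you handle the mass bound $\mu(\lambda^{-1}(0))<\tfrac12$ has two genuine problems.

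First, you have fixed the scale $\alpha$ from \S\ref{mainresults} and are trying to show $\mu(\overline{B(\Omega,2\alpha)})<\tfrac12$ for that particular $\alpha$ by a contradiction/free-energy argument. That argument, as written, is only heuristic: there is no mechanism turning ``too much mass near $\Omega$'' into ``free energy below $P(\varphi)$'' without further quantitative input, and nothing in the paper supplies that for a \emph{fixed} $\alpha$. The paper's proof sidesteps this entirely by exploiting a degree of freedom you are not using: the hypotheses (1) and (2) of Theorem~\ref{thm: K} hold for the $\lambda^\beta$-decomposition for \emph{every} sufficiently small $\beta\leq\alpha$, not just the fixed $\alpha$. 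So one first observes that $\mu(\Omega)=0$ --- this is immediate: $\Omega$ is $f$-invariant, $\mu$ is ergodic (being the unique equilibrium state), so $\mu(\Omega)\in\{0,1\}$, and $\mu(\Omega)=1$ would force $P_\mu(\varphi)\leq A(\Omega,\varphi)<P(\varphi)$. Then, since $\bigcap_{\beta>0}\overline{B(\Omega,2\beta)}=\Omega$, continuity of measure gives $\mu(\overline{B(\Omega,2\beta)})<\tfrac12$ for all small enough $\beta$. One then applies Theorem~\ref{thm: productGap} with $\lambda=\lambda^\beta$ for that $\beta$. No contradiction argument or entropy estimate is needed.

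Second, your ``alternative'' route invokes Theorem~\ref{thm:posentropy} to get positive entropy. This is circular: in the paper, Theorem~\ref{thm:posentropy} is proved \emph{as a corollary} of the $K$-property result you are trying to establish (the sentence just before the theorem you are proving says exactly this). You cannot assume positive entropy here.

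In summary: replace your contradiction argument with the two-line observation $\mu(\Omega)=0$ plus shrinking $\beta$, and drop the appeal to Theorem~\ref{thm:posentropy}.
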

\begin{proof}
 We verify the hypotheses of Theorem \ref{thm: K}. In \S \ref{mainresults}, for any $\beta>0$ small, we defined a function $\lambda^{\beta}$ and a decomposition $(\cG^\beta(\eta), \cB^\beta (\eta))$. We write $\alpha$ for the fixed choice of scale we used in  \S \ref{mainresults}.  Any $\cG^\beta(\eta)$ with $\beta< \alpha$ satisfies hypotheses (1) and (2), so all that is left to show is the pressure gap \eqref{prodpressure} for the product system. 
 Let $\mu$ be the unique equilibrium state for $(X,f,\varphi)$ provided by Theorem \ref{thm:mainA}. Since $\mu$ is ergodic and an equilibrium state, since $A(\Omega, \varphi)< P(J(f),\varphi)$, we must have $\mu(\Omega)=0$. Since $\bigcap_{\beta>0} B(\Omega, 2\beta) = \Omega$, we can choose  $\beta>0$ sufficiently small so that $\mu(B(\Omega, 2\beta)) <\frac{1}{2}$. We have $(\lambda^{\beta})^{-1}(0) = B(\Omega, 2\beta)$, and thus for this choice of $\beta$, we have $\mu((\lambda^{\beta})^{-1}(0))<\frac{1}{2}$. Thus, by Theorem \ref{thm: productGap}, \eqref{prodpressure} holds for the $\lambda^\beta$-decomposition. We have met the hypotheses of Theorem \ref{thm: K} and we can conclude that the unique equilibrium state $\mu$ for $(J(f),f,\varphi)$ has the $K$-property.
\end{proof}
The condition that potential $\varphi$ is hyperbolic in the sense that there exists $n$ so that  $\sup_J \frac{1}{n}S_n \varphi  < P(\varphi)$ is equivalent to the condition that every equilibrium state of $\varphi$ has positive entropy\footnote{The main result of Inoquio-Renteria and Rivera-Letelier \cite{IR}, which we do not need and relies on $f$ being a rational map, is that the hyperbolicity condition for $\varphi$ is equivalent to the condition that the Lyapunov exponent of each equilibrium state of $f$ for the potential $\varphi$ is strictly positive.}.  See Proposition 3.1 of \cite{IR} for a short general proof of this fact. Since our unique equilibrium state has positive entropy, we conclude that $\varphi$ is hyperbolic.


We now have everything we need to prove Corollary \ref{cor2}. Let $f:\RS\rightarrow \RS$ be a parabolic rational map of degree $d\geq 2$ and $\Omega$ be its set of rationally indifferent periodic points. Let $\varphi: J(f) \rightarrow \bbR$ be a H\"older potential. If $\varphi$ is hyperbolic, then it is obvious that $A(\Omega, \varphi)< P(\varphi)$. If  $A(\Omega, \varphi)< P(\varphi)$, then by our main theorems we have obtained a unique equilibrium state, and it has positive entropy. We conclude that $\varphi$ is hyperbolic. By 
\cite[Proposition 3.1]{IR}, there is a potential $\tilde \varphi$ cohomologous to $\varphi$ (and thus with the same set of equilibrium states) so that $\sup_J \tilde \varphi < P(\tilde \varphi)$. We are now in the setting of Szostakiewicz, Urbanski and Zdunik \cite{SUZ}. The unique equilibrium state for $\tilde \varphi$ (and thus $\varphi$) satisfies all the strong statistical properties provided by \cite{SUZ}, which include exponential decay of correlations, the central limit theorem and the law of iterated logarithms.

\subsection*{Acknowledgements} We thank Roland Roeder, Liz Vivas and Juan Rivera-Letelier for helpful discussions and comments. 
\bibliographystyle{siam}
\bibliography{main.bib}

\begin{thebibliography}{10}

\bibitem{Aaronson}
{\sc J.~Aaronson, M.~Denker, and M.~Urba\'nski}, {\em Ergodic theory for
  {M}arkov fibred systems and parabolic rational maps}, Trans. Amer. Math.
  Soc., 337 (1993), pp.~495--548.

\bibitem{ALP}
{\sc E.~Araujo, Y.~Lima, and M.~Poletti}, {\em Symbolic dynamics for
  nonuniformly hyperbolic maps with singularities in high dimension}, Mem.
  Amer. Math. Soc., 301 (2024), pp.~vi+117.

\bibitem{BianchiDinh}
{\sc F.~Bianchi and T.-C. Dinh}, {\em Equilibrium states of endomorphisms of
  {$\Bbb P ^k$} {I}: existence and properties}, J. Math. Pures Appl. (9), 172
  (2023), pp.~164--201.

\bibitem{BianchiDinh2}
\leavevmode\vrule height 2pt depth -1.6pt width 23pt, {\em Equilibrium states
  of endomorphisms of {$\Bbb P^k$}: spectral stability and limit theorems},
  Geom. Funct. Anal., 34 (2024), pp.~1006--1051.

\bibitem{BianchiHE}
{\sc F.~Bianchi and Y.~M. He}, {\em A {M}a\~n\'e-{M}anning formula for
  expanding measures for endomorphisms of {$\Bbb P^k$}}, Trans. Amer. Math.
  Soc., 377 (2024), pp.~8179--8219.

\bibitem{BH25}
{\sc F.~Bianchi and Y.~M. He}, {\em Manhattan curves in complex dynamics and
  asymptotic correlation of multiplier spectra}, 2025.

\bibitem{Blanchard}
{\sc P.~Blanchard}, {\em Complex analytic dynamics on the {R}iemann sphere},
  Bull. Amer. Math. Soc. (N.S.), 11 (1984), pp.~85--141.

\bibitem{B74}
{\sc R.~Bowen}, {\em Some systems with unique equilibrium states}, Mathematical
  systems theory, 8 (1974), p.~193–202.

\bibitem{Briend}
{\sc J.-Y. Briend and J.~Duval}, {\em Deux caract\'erisations de la mesure
  d'\'equilibre d'un endomorphisme de {$\mathbb{P}^k(\mathbb{C})$}}, Publ.
  Math. Inst. Hautes \'Etudes Sci.,  (2001), pp.~145--159.

\bibitem{BK98}
{\sc H.~Bruin and G.~Keller}, {\em Equilibrium states for {$S$}-unimodal maps},
  Ergodic Theory Dynam. Systems, 18 (1998), pp.~765--789.

\bibitem{BCFT}
{\sc K.~Burns, V.~Climenhaga, T.~Fisher, and D.~J. Thompson}, {\em Unique
  equilibrium states for geodesic flows in nonpositive curvature}, Geometric
  and Functional Analysis, 28 (2018), pp.~1209--1259.

\bibitem{BCS22}
{\sc J.~Buzzi, S.~Crovisier, and O.~Sarig}, {\em Measures of maximal entropy
  for surface diffeomorphisms}, Ann. of Math. (2), 195 (2022), pp.~421--508.

\bibitem{Call22}
{\sc B.~Call}, {\em The {$K$}-property for some unique equilibrium states in
  flows and homeomorphisms}, Ergodic Theory Dynam. Systems, 42 (2022),
  pp.~2509--2532.

\bibitem{CESW}
{\sc B.~Call, D.~Constantine, A.~Erchenko, N.~Sawyer, and G.~Work}, {\em Unique
  equilibrium states for geodesic flows on flat surfaces with singularities},
  Int. Math. Res. Not. IMRN,  (2023), pp.~15155--15206.

\bibitem{CallT22}
{\sc B.~Call and D.~J. Thompson}, {\em Equilibrium states for self-products of
  flows and the mixing properties of rank 1 geodesic flows}, J. Lond. Math.
  Soc. (2), 105 (2022), pp.~794--824.

\bibitem{CT13}
{\sc V.~Climenhaga and D.~J. Thompson}, {\em Equilibrium states beyond
  specification and the bowen property}, Journal of the London Mathematical
  Society, 87 (2013), pp.~401--427.

\bibitem{CT14}
{\sc V.~Climenhaga and D.~J. Thompson}, {\em Intrinsic ergodicity via
  obstruction entropies}, Ergodic Theory Dynam. Systems, 34 (2014),
  pp.~1816--1831.

\bibitem{CT16}
{\sc V.~Climenhaga and D.~J. Thompson}, {\em Unique equilibrium states for
  flows and homeomorphisms with non-uniform structure}, Adv. Math, 303 (2016),
  pp.~745--799.

\bibitem{CRL25}
{\sc D.~Coronel and J.~Rivera-Letelier}, {\em Phase transitions in temperature
  for intermittent maps}, 2025.

\bibitem{DPU}
{\sc M.~Denker, F.~Przytycki, and M.~Urba\'nski}, {\em On the transfer operator
  for rational functions on the {R}iemann sphere}, Ergodic Theory Dynam.
  Systems, 16 (1996), pp.~255--266.

\bibitem{Du91Absolute}
{\sc M.~Denker and M.~Urba\'nski}, {\em Absolutely continuous invariant
  measures for expansive rational maps with rationally indifferent periodic
  points}, Forum Math., 3 (1991), pp.~561--579.

\bibitem{DU-ES}
\leavevmode\vrule height 2pt depth -1.6pt width 23pt, {\em Ergodic theory of
  equilibrium states for rational maps}, Nonlinearity, 4 (1991), pp.~103--134.

\bibitem{DU91}
\leavevmode\vrule height 2pt depth -1.6pt width 23pt, {\em Hausdorff and
  conformal measures on {J}ulia sets with a rationally indifferent periodic
  point}, J. London Math. Soc. (2), 43 (1991), pp.~107--118.

\bibitem{Beardon}
{\sc A.~F.Beardon}, {\em Iteration of rational functions}, Springer, 1991.

\bibitem{FLM}
{\sc A.~Freire, A.~Lopes, and R.~Ma\~n\'e}, {\em An invariant measure for
  rational maps}, Bol. Soc. Brasil. Mat., 14 (1983), pp.~45--62.

\bibitem{GPR}
{\sc K.~Gelfert, F.~Przytycki, and M.~Rams}, {\em Lyapunov spectrum for
  rational maps}, Math. Annalen, 348 (2010), pp.~965--1004.

\bibitem{HI}
{\sc N.~Haydn and S.~Isola}, {\em Parabolic rational maps}, J. London Math.
  Soc. (2), 63 (2001), pp.~673--689.

\bibitem{IR}
{\sc I.~Inoquio-Renteria and J.~Rivera-Letelier}, {\em A characterization of
  hyperbolic potentials of rational maps}, Bull. Braz. Math. Soc. (N.S.), 43
  (2012), pp.~99--127.

\bibitem{LOP}
{\sc Y.~Lima, D.~Obata, and M.~Poletti}, {\em Measures of maximal entropy for
  non-uniformly hyperbolic maps}, 2025.

\bibitem{Lyubich}
{\sc M.~Lyubich}, {\em Entropy properties of rational endomorphisms of the
  riemann sphere}, Ergodic Theory and Dynamical Systems, 3 (1983),
  pp.~351--385.

\bibitem{Mane}
{\sc R.~Ma\~n\'e}, {\em On the uniqueness of the maximizing measure for
  rational maps}, Bol. Soc. Brasil. Mat., 14 (1983), pp.~27--43.

\bibitem{UV}
{\sc V.~Mayer and M.~Urba\'nski}, {\em Thermodynamic formalism and geometric
  applications for transcendental meromorphic and entire functions}, in
  Thermodynamic formalism, vol.~2290 of Lecture Notes in Math., Springer, Cham,
  [2021], pp.~99--139.

\bibitem{milnor2critical}
{\sc J.~Milnor}, {\em On rational maps with two critical points}, Experiment.
  Math., 9 (2000), pp.~481--522.

\bibitem{Milnor}
{\sc J.~Milnor}, {\em Dynamics in one complex variable}, Princeton University
  Press, 2006.

\bibitem{PYY}
{\sc M.~J. Pacifico, F.~Yang, and J.~Yang}, {\em Existence and uniqueness of
  equilibrium states for systems with specification at a fixed scale: an
  improved {C}limenhaga-{T}hompson criterion}, Nonlinearity, 35 (2022),
  pp.~5963--5992.

\bibitem{Przytycki}
{\sc F.~Przytycki}, {\em On the {P}erron-{F}robenius-{R}uelle operator for
  rational maps on the {R}iemann sphere and for {H}\"older continuous
  functions}, Bol. Soc. Brasil. Mat. (N.S.), 20 (1990), pp.~95--125.

\bibitem{fP93}
\leavevmode\vrule height 2pt depth -1.6pt width 23pt, {\em Lyapunov
  characteristic exponents are nonnegative}, Proc. Amer. Math. Soc., 119
  (1993), pp.~309--317.

\bibitem{P-survery}
\leavevmode\vrule height 2pt depth -1.6pt width 23pt, {\em Thermodynamic
  formalism methods in the theory of iteration of mappings in dimension one,
  real and complex}, Ann. Math. Sil., 35 (2021), pp.~1--20.

\bibitem{PRL}
{\sc F.~Przytycki and J.~Rivera-Letelier}, {\em Nice inducing schemes and the
  thermodynamics of rational maps}, Comm. Math. Phys., 301 (2011),
  pp.~661--707.

\bibitem{PRLS03}
{\sc F.~Przytycki, J.~Rivera-Letelier, and S.~Smirnov}, {\em Equivalence and
  topological invariance of conditions for non-uniform hyperbolicity in the
  iteration of rational maps}, Invent. Math., 151 (2003), pp.~29--63.

\bibitem{RL20}
{\sc J.~Rivera-Letelier}, {\em Asymptotic expansion of smooth interval maps},
  no.~416, 2020, pp.~33--63.
\newblock Some aspects of the theory of dynamical systems: a tribute to
  Jean-Christophe Yoccoz. Vol. II.

\bibitem{dR82}
{\sc D.~Ruelle}, {\em Repellers for real analytic maps}, Ergodic Theory Dynam.
  Systems, 2 (1982), pp.~99--107.

\bibitem{oS01}
{\sc O.~M. Sarig}, {\em Phase transitions for countable {M}arkov shifts}, Comm.
  Math. Phys., 217 (2001), pp.~555--577.

\bibitem{SUZ}
{\sc M.~Szostakiewicz, M.~Urba\'nski, and A.~Zdunik}, {\em Fine inducing and
  equilibrium measures for rational functions of the {R}iemann sphere}, Israel
  J. Math., 210 (2015), pp.~399--465.

\bibitem{URM}
{\sc M.~Urba\'nski, M.~Roy, and S.~Munday}, {\em Non-invertible dynamical
  systems. {V}ol. 3. {A}nalytic endomorphisms of the {R}iemann sphere},
  vol.~69.3 of De Gruyter Expositions in Mathematics, De Gruyter, Berlin, 2023.

\bibitem{UrbanskiZdunki}
{\sc M.~Urba\'nski and A.~Zdunik}, {\em Equilibrium measures for holomorphic
  endomorphisms of complex projective spaces}, Fund. Math., 220 (2013),
  pp.~23--69.

\bibitem{Walters}
{\sc P.~Walters}, {\em An introduction to ergodic theory}, Springer-Verlag,
  1982.

\bibitem{pW92}
{\sc P.~Walters}, {\em Differentiability properties of the pressure of a
  continuous transformation on a compact metric space}, J. London Math. Soc.
  (2), 46 (1992), pp.~471--481.

\bibitem{Zinsmeister}
{\sc M.~Zinsmeister}, {\em Thermodynamic formalism and holomorphic dynamical
  systems. {Transl}. from the {French} by {C}. {Greg} {Anderson}}, vol.~2 of
  SMF/AMS Texts Monogr., Providence, RI: American Mathematical Society; Paris:
  Soci{\'e}t{\'e} Math{\'e}matique de France, 2000.

\end{thebibliography}

\end{document}